\pdfoutput=1 %may be required for arXiv
\documentclass[11pt]{amsart}
\usepackage{epsfig,overpic,comment}%graphics,amssymb
\usepackage[usenames,dvipsnames,svgnames,table]{xcolor}
\usepackage[pagebackref,linktocpage=true,colorlinks=true,linkcolor=Blue,citecolor=BrickRed,urlcolor=RoyalBlue]{hyperref}
\usepackage[alphabetic,msc-links,abbrev]{amsrefs} %shortalphabetic,msc-links,backrefs

\textwidth=5.5in
\textheight=8in
\oddsidemargin=0.5in
\evensidemargin=0.5in
\topmargin=.5in

\newtheorem{thm}{Theorem}[section]
\newtheorem{lem}[thm]{Lemma}
\newtheorem{cor}[thm]{Corollary}
\newtheorem{prop}[thm]{Proposition}

\theoremstyle{definition}

\newtheorem{note}[thm]{Note}

\newcommand{\R}{\mathbf{R}}
\newcommand{\RP}{\mathbf{RP}}
\newcommand{\ol}{\overline}
\newcommand{\C}{\mathcal{C}}

\renewcommand{\d}{\partial}

\renewcommand{\S}{\mathbf{S}}

\renewcommand{\tilde}{\widetilde}

\DeclareMathOperator{\dist}{dist}
\DeclareMathOperator{\inte}{int}
\DeclareMathOperator{\conv}{conv}

\DeclareMathOperator{\Length}{Length}
\DeclareMathOperator{\II}{II}
\DeclareMathOperator{\vol}{vol}

\title[Nonnegatively curved hypersurfaces]{Nonnegatively curved hypersurfaces with\\ free boundary on a sphere}

\author{Mohammad Ghomi}
\address{School of Mathematics, Georgia Institute of Technology,
Atlanta, GA 30332, USA}
\email{ghomi@math.gatech.edu}
\urladdr{www.math.gatech.edu/$\sim$ghomi}

\author{Changwei Xiong}
\address{Mathematical Sciences Institute, Australian National University, Canberra ACT 2601, Australia}
\email{changwei.xiong@anu.edu.au}
\urladdr{maths.anu.edu.au/people/changwei-xiong}

\date{\today \,(Last Typeset)}
\subjclass[2010]{Primary: 53A07, 58E35; Secondary; 52A20, 49Q10.}
\keywords{Nonnegative sectional curvature, constant mean or scalar curvature, Weingarten surface, free boundary, Alexandrov reflection, convex cap, totally umbilical surface.}
\thanks{The research of M.G. was supported in part by NSF grant DMS-1711400. The research of C.X. was supported by the ARC Laureate Fellowship FL150100126.}

\begin{document}

\begin{abstract}
We prove that in Euclidean space $\R^{n+1}$ any compact immersed nonnegatively curved hypersurface  $M$ with free boundary on the  sphere $\S^n$  is an embedded convex  topological disk. In particular, when the $m^{th}$ mean curvature of $M$ is constant,  for any $1\leq m\leq n$, $M$ is a spherical cap or an equatorial disk.
\end{abstract}

\maketitle

\section{Introduction}\label{sec1}

A fundamental  result in submanifold geometry is the convexity of closed hypersurfaces with nonnegative (sectional) curvature immersed in Euclidean space $\R^{n+1}$.  Hadamard \cite{Had97} observed this phenomenon for $n=2$ and positive curvature in 1897. Chern and Lashof \cite{chern&lashof:tight2} extended Hadamard's theorem to nonnegative curvature,  and Sacksteder \cite{sacksteder:convex}  proved the full result in all dimensions, by reducing it to a nonsmooth analogue due to van Heijenoort \cite{vH:convex}. Similar phenomena have also been established   in the sphere $\S^{n+1}$ and the hyperbolic space $\mathbf{H}^{n+1}$, by do~Carmo and Warner \cite{docarmo&warner}, and  Currier \cite{Cur89} respectively.
We  add a  result to this genre for surfaces with boundary:

\begin{thm}\label{thm:main}
Any compact $\C^\infty$ immersed nonnegatively curved hypersurface  $M$ in $\R^{n+1}$ with free boundary $\d M$ on  $\S^n$ is an embedded convex topological disk.
\end{thm}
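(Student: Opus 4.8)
The plan is to reduce Theorem~\ref{thm:main} to the classical closed case (Sacksteder--van~Heijenoort) by ``doubling'' $M$ across the sphere. The free boundary condition means that along $\d M$ the hypersurface $M$ meets $\S^n$ orthogonally; hence if $\sigma\colon\R^{n+1}\to\R^{n+1}$ denotes inversion in $\S^n$ (or reflection, after we know $M$ is convex and lies inside the ball), the union $\widehat M := M\cup\sigma(M)$ is a $\C^1$ hypersurface: the orthogonality guarantees that the tangent planes of $M$ and $\sigma(M)$ agree along $\d M$. The first step is therefore to establish this $\C^1$ (indeed $\C^\infty$, since $\sigma$ is conformal and $M$ is umbilic-free only in the interior) gluing, and to check that the doubled object is a closed hypersurface immersed in $\R^{n+1}$ — or rather in $\S^{n+1}$ after a conformal change, since inversion does not preserve curvature signs in $\R^{n+1}$.

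The key technical point — and the step I expect to be the main obstacle — is controlling the curvature of the glued hypersurface near $\d M$. Inversion is a conformal but not isometric map, so $\sigma(M)$ need not be nonnegatively curved even if $M$ is; one must instead work in the round sphere $\S^{n+1}$, where reflection across a totally geodesic $\S^n$ \emph{is} an isometry. Concretely, I would apply a conformal diffeomorphism (stereographic-type) carrying $(\R^{n+1},\S^n)$ to $(\S^{n+1}_+,\S^n)$ with $\S^n$ going to the totally geodesic equator, push $M$ forward, and then genuinely reflect across the equator. Nonnegativity of sectional curvature is not conformally invariant, so this requires a direct computation (or a clever choice of the conformal factor) showing the image of $M$ in $\S^{n+1}$ still has nonnegative curvature — perhaps using that the relevant comparison is governed by the second fundamental form and that the free-boundary angle makes the conformal correction terms vanish to the needed order along $\d M$. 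Once inside $\S^{n+1}$, the reflected double is a closed $\C^\infty$ nonnegatively curved immersed hypersurface, so by the theorem of do~Carmo--Warner it bounds a convex body; restricting back, $M$ is an embedded convex disk.

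As an alternative, should the conformal gluing prove too delicate, I would instead run Alexandrov reflection (the moving-plane method) directly on $M$ together with the supporting hyperplanes of $\S^n$: sweep a family of hyperplanes orthogonal to a fixed direction, reflect the part of $M$ on one side, and use the free boundary condition to keep the reflected piece from crossing $\S^n$. This should force $M$ to be a graph over each such direction, hence convex, and then embeddedness and the disk topology follow. The subtlety here is the usual one in Alexandrov reflection with boundary — ensuring the reflected boundary stays on the correct side of $\S^n$ and handling the first moment of tangential contact — but the free-boundary angle is exactly what is needed to start the process. Either way, the final clause of the abstract, that constant $m^{th}$ mean curvature forces a spherical cap or equatorial disk, will then follow from convexity plus an Alexandrov-type uniqueness argument applied to the embedded convex disk $M$.
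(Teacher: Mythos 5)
There is a genuine gap, and it sits exactly where you predicted: the curvature of the doubled hypersurface. Doubling across $\S^n$ by inversion $\sigma(x)=x/\|x\|^2$ (equivalently, transplanting conformally to $\S^{n+1}$ and reflecting across the totally geodesic equator) does \emph{not} preserve nonnegative sectional curvature of $M$, and no choice of conformal factor fixes this at interior points. Concretely, under inversion the principal curvatures transform by $\tilde k_i=\|x\|^2k_i+2\langle x,N\rangle$; the additive term is the same for every $i$ but varies over $M$, so at an interior point with, say, $k_1=1$, $k_2=3$, $\|x\|=1$ and $\langle x,N\rangle=-1$ one gets $\tilde k_1\tilde k_2=-1<0$. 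The free boundary condition only kills the correction term \emph{along} $\d M$ (where $\langle x,N\rangle=0$), not in the interior, which is where the obstruction lives. A second problem is regularity: the double of a free-boundary hypersurface is in general only $\C^{1,1}$ across $\d M$ (smoothness requires extra compatibility conditions on the normal jets of $M$ at $\d M$), so even if the curvature survived you could not quote Sacksteder or do~Carmo--Warner off the shelf; you would need a low-regularity version of those theorems. Your fallback, Alexandrov reflection, also cannot prove Theorem \ref{thm:main}: the moving-plane method runs on a maximum principle, hence on an elliptic equation such as constant $m^{th}$ mean curvature, and a general nonnegatively curved hypersurface satisfies no such equation, so nothing forces the reflected piece to stay on one side. (Reflection is used only for Corollary \ref{cor:cmc}, \emph{after} convexity is established.)

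For comparison, the actual route is: (i) show each component $\Gamma$ of $\d M$ is convex in $\S^n$ --- immediate from the Gauss equation, Lemma \ref{lem1}, and do~Carmo--Warner when $n\geq 3$, but the bulk of the work when $n=2$, where one must first prove local convexity of $M$ and simplicity of $\Gamma$; (ii) cap each $\Gamma$ with an explicitly constructed convex disk (the cone over $\Gamma$ from $o$, smoothed near the apex and inverted through $\S^n$ when $M$ lies inside the ball), producing a closed $\C^1$ hypersurface $\ol M$ that is $\C^2$ and nonnegatively curved off a closed null set with null Gauss image; (iii) prove a $\C^1$ version of Sacksteder's theorem for such hypersurfaces via Morse theory and Chern--Lashof. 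Step (ii) deliberately sidesteps the difficulty you ran into: the cap is convex by construction, not obtained as a conformal image of $M$ itself.
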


\emph{Free boundary}  here means that $M$ is orthogonal to $\S^n$ along $\d M$---a condition which arises naturally  in variational problems, e.g., see \cite{cgr:total, cgr:relative}.  Furthermore, by \emph{convex} we mean that the surface lies on the boundary of a convex body (see Section \ref{subsec:terminology} for basic definitions). Surfaces  with free boundary  have received  much attention recently, especially since Fraser and Schoen \cite{FS11} studied Steklov eigenvalues  of minimal submanifolds in the ball $B^{n+1}$. These works often point to a strong similarity between closed hypersurfaces of $\S^{n+1}$ and hypersurfaces with free boundary in $B^{n+1}$. The above theorem is another instance of this phenomenon, and also yields the following  characterization for umbilical hypersurfaces, which mirrors results of Hartman \cite{Har78} and Cheng and Yau \cite{cheng&yau}:

\begin{cor}\label{cor:cmc}
Let $M$ be as in Theorem \ref{thm:main}. Suppose that the $m^{th}$ mean curvature of $M$ is constant for some $1\leq m\leq n$. Then $M$ is either a spherical cap or an equatorial disk.
\end{cor}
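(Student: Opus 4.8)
The plan is to reduce to the convex setting via Theorem~\ref{thm:main} and then run a Hsiung--Minkowski rigidity argument in which the free boundary condition controls all boundary contributions. First, by Theorem~\ref{thm:main} we may take $M$ to be an embedded convex topological disk, oriented by the unit normal $\nu$ pointing out of the convex body it bounds, so that its principal curvatures $\kappa=(\kappa_1,\dots,\kappa_n)$ are everywhere nonnegative; hence $\sigma_j:=\sigma_j(\kappa)\ge0$ for all $j$ and all the Maclaurin--Newton inequalities hold, e.g.\ with $H_j:=\sigma_j/\binom nj$ and $H_0:=1$ one has $H_1\ge H_2^{1/2}\ge\dots\ge H_k^{1/k}$ whenever $H_k>0$, with equality anywhere in such a chain forcing $M$ to be totally umbilical there. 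Write $H_m\equiv c\ge0$. If $c=0$, the nonnegativity of the $\kappa_i$ forces $M$ to be totally geodesic (for $m=1$ directly, and for $m\ge2$ because then $\sigma_n\equiv0$, making $M$ a degenerate convex disk which under the free boundary condition must be flat), hence an equatorial disk. So assume $c>0$, the case $c=0$ having just been disposed of.

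Next I would extract the geometry of the free boundary. Let $X$ be the position vector based at the center of $\S^n$, so $|X|=1$ on $\partial M$; orthogonality of $M$ to $\S^n$ means $\nu$ is tangent to $\S^n$ along $\partial M$, hence $\langle X,\nu\rangle=0$ there, $X$ is tangent to $M$, and $X$ equals, up to sign, the outward unit conormal $\mu$ of $\partial M$ in $M$. Differentiating $\langle X,\nu\rangle\equiv0$ in directions tangent to $\partial M$ shows moreover that $X$ is a principal direction of $M$ at each boundary point; so, writing $\widehat\kappa$ for the $(n-1)$-tuple of principal curvatures of $M$ in the directions tangent to $\partial M$, the Newton tensors $T_{k-1}$ satisfy $\langle T_{k-1}X,X\rangle=\sigma_{k-1}(\widehat\kappa)\ge0$ along $\partial M$.

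Then I would carry out the Hsiung--Minkowski argument. Applying the divergence theorem on $M$ to the tangential field $T_{k-1}(X^\top)$, where $X^\top$ is the tangential part of $X$ along $M$ — and using that $T_{k-1}$ is divergence free in $\R^{n+1}$, that $\operatorname{tr}T_{k-1}=(n-k+1)\sigma_{k-1}$ and $\operatorname{tr}(T_{k-1}A)=k\sigma_k$ for the shape operator $A$, together with the boundary data above — gives, for $1\le k\le n$,
\[
\int_M\bigl[(n-k+1)\,\sigma_{k-1}-k\,\sigma_k\,\langle X,\nu\rangle\bigr]\,dA=\pm\int_{\partial M}\sigma_{k-1}(\widehat\kappa)\,ds .
\]
Combining the cases $k=1$ and $k=m$ with the divergence identity on the region $\Omega$ enclosed by $M$ and a spherical cap $\Sigma\subset\S^n$ with $\partial\Sigma=\partial M$ (which expresses $\int_M\langle X,\nu\rangle$ solely through $\vol(\Omega)$ and $\vol(\Sigma)$, since the principal curvatures of $\Sigma$ all equal $1$), and then inserting the pointwise bounds $H_1\ge c^{1/m}$ and $H_{m-1}\ge c^{(m-1)/m}$ coming from Maclaurin's inequality, one is led — exactly as in Hsiung's rigidity theorem for closed convex hypersurfaces — to a chain of inequalities forced to be a chain of equalities. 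Equality in the Maclaurin bound then yields $H_1\equiv c^{1/m}$, so $M$ is totally umbilical. Finally, a connected totally umbilical hypersurface of $\R^{n+1}$ is an open piece of a round sphere or a hyperplane; the free boundary condition forces the hyperplane to pass through the center of $\S^n$, so $M$ is an equatorial disk, or forces a sphere $S(p,r)$ to satisfy $|p|^2=1+r^2$, so (as $\partial M$ is then an embedded $(n-1)$-sphere in $\S^n$) $M$ is the corresponding spherical cap.

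The main obstacle is the squeeze in the previous paragraph. In the closed case Hsiung's two Minkowski formulas close immediately, but here each carries an extra boundary integral, and, worse, the support function $\langle X,\nu\rangle$ and the sign in the displayed identity depend on whether $M$ lies inside or outside $B^{n+1}$ — both occur, since the spherical caps orthogonal to $\S^n$ form an inner and an outer family. So making the argument rigorous really amounts to establishing a sharp free boundary analogue of the Heintze--Karcher inequality (for $m=1$), or of the Alexandrov--Fenchel/quermassintegral inequalities (for $m\ge2$), whose equality case is exactly the spherical caps, together with the fact that this equality case combined with the convexity of $M$ upgrades to umbilicity. Convexity is indispensable here: only hyperplanes through the center of $\S^n$ preserve the free boundary condition, so an Alexandrov reflection argument alone cannot produce the rotational symmetry one would otherwise want, and indeed, without a curvature hypothesis, nonspherical free boundary hypersurfaces of constant mean curvature exist in the ball.
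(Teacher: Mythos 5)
Your reduction to the convex case via Theorem \ref{thm:main} and your computation of the boundary data (that $\langle X,\nu\rangle=0$ and $X=\pm\mu$ along $\partial M$, so the Minkowski boundary terms become $\pm\int_{\partial M}\sigma_{k-1}(\widehat\kappa)$) are fine, but the proof does not close: the entire conclusion rests on the ``squeeze'' in your third paragraph, and you yourself concede that carrying it out ``really amounts to establishing a sharp free boundary analogue of the Heintze--Karcher inequality\dots or of the Alexandrov--Fenchel/quermassintegral inequalities.'' That is precisely the missing content, not a technicality. In the closed case the two Hsiung--Minkowski identities combine because there are no boundary integrals and $\int_M\langle X,\nu\rangle\,dA$ has a fixed sign; here the $k=m$ identity produces the term $\int_{\partial M}\sigma_{m-1}(\widehat\kappa)\,ds$, which is not comparable to $\Length[\partial M]$ by anything you have proved, the sign of $\langle X,\nu\rangle$ on $\inte(M)$ is not controlled (both the inner and outer families of orthogonal caps occur, as you note), and the sign $\pm$ in front of the boundary integral flips between these cases. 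Without the inequality that closes the chain, nothing forces equality in Maclaurin, and umbilicity does not follow. There is also a secondary gap in your $c=0$, $m\ge 2$ case: $\sigma_m\equiv 0$ with $\kappa_i\ge 0$ only gives $\sigma_n\equiv 0$, and a flat convex disk need not be totally geodesic (a cylindrical piece has $\sigma_n=0$); ruling this out requires an argument of Hartman--Nirenberg type using the free boundary condition, as in Section \ref{sec:dMIII}, not just the word ``degenerate.''

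Your closing remark that ``an Alexandrov reflection argument alone cannot produce the rotational symmetry one would otherwise want'' is a misconception, and correcting it points to the proof that actually works. Reflections in hyperplanes through $o$ do preserve both $\S^n$ and the free boundary condition, and the rotating-plane variant of Alexandrov's method (rotate a support hyperplane $H$ of $M$ about the axis $L=H\cap(\R^n\times\{0\})$ when $\partial M$ lies in an open hemisphere, or translate hyperplanes orthogonal to the hyperplane of $\partial M$ when $\partial M$ is a great sphere) shows that the embedded disk $M$ is invariant under $O(n)$ fixing a line through $o$. One then invokes Hsiang's generalization of Delaunay's theorem: a rotational hypersurface of constant $m^{th}$ mean curvature that is neither a sphere nor minimal is periodic, and $M$ cannot be part of a periodic hypersurface because it meets its axis; the minimal case is excluded by the maximum principle unless $\partial M$ is a great sphere, giving the equatorial disk. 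This route needs no integral inequality at all. If you want to pursue your integral-formula route instead, you must actually prove the free boundary quermassintegral inequality with its equality case, which is a substantial open-ended task rather than a routine adaptation of Hsiung's closed-hypersurface argument.
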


The proof of Theorem \ref{thm:main} employs the classical results mentioned above together with a host of more recent techniques \cite{ghomi:stconvex,alexander&ghomi:chp, ghomi:verticesC,ghomi&howard:tancones}. First we show that every component $\Gamma$ of $\d M$ is convex in $\S^n$ (Sections \ref{sec:dM}, \ref{sec:dMII}, \ref{sec:dMIII}). Next we glue a convex disk along each $\Gamma$ to extend $M$ to a closed $\C^1$ hypersurface $\ol M$, which is $\C^\infty$ and nonnegatively curved almost everywhere (Section \ref{sec:extension}). Finally we prove the convexity of $\ol M$ by adapting a proof of Sacksteder's theorem (in the compact case) due to do Carmo and Lima \cite{dCL69} (Section \ref{sec:dCL}). Proofs of the last two steps are the same in all dimensions; however, the first step involves much more work for $n=2$ (Sections  \ref{sec:dMII}, \ref{sec:dMIII}), which forms the bulk of this paper. Corollary \ref{cor:cmc} follows quickly from Theorem \ref{thm:main} via Alexandrov's reflection technique and the generalized Delaunay theorem for rotational surfaces (Section \ref{sec:cmc}). The following notes show that the conditions of Theorem \ref{thm:main} are sharp.

\begin{note}
The free boundary condition in Theorem \ref{thm:main} is essential. Consider for instance the surface $\Sigma\subset\R^3$ given by $z = x^3(1 + y^2)$ and $|y|<1/2$, which appears in Sacksteder \cite{sacksteder:convex}. This surface is nonnegatively curved,  but fails to be convex in any neighborhood of the origin. Let $\lambda\Sigma$ denote homothetic copies of $\Sigma$ for $\lambda>2$, and
$M_\lambda$ be the component of  $\lambda\Sigma$ contained in $\S^2$. As $\lambda\to \infty$, $M_\lambda$ becomes arbitrarily close to being orthogonal to $\S^2$, while it remains nonconvex.
\end{note}

\begin{note}\label{note:torus}
Theorem \ref{thm:main} may not hold if $\S^n$ is replaced by another convex surface. For instance let $C$ be the cylinder $x^2+y^2=1$ in $\R^3$, $T$ be the torus obtained by revolving the circle given by $(x-1)^2+z^2=1/4$ and $y=0$ around the $z$-axis, and  $M$ be the portion of $T$ outside $C$. Then $M$ is a nonnegatively curved surface with free boundary on $C$, which is not simply connected. We may regard $T$ as the image of a multiple covering by another torus, in which case $M$ will also fail to be embedded. Finally let $T'$ be the portion of $T$ contained in the region $\{x\leq 0\}\cup\{y\leq 0\}$ of $\R^3$,  fill in the boundary components of $T'$ with disks, and let $M'$ be the portion of the resulting surface which lies outside $C$. Smoothing the corners of $M'$ yields a nonnegatively curved surface with free boundary on $C$ which is not convex.
\end{note}

\begin{note}
The free boundary condition in Theorem \ref{thm:main} cannot be generalized to a constant angle (or capillary) condition along $\d M$. Indeed let $T$ be the torus of revolution in Note \ref{note:torus}, $S$ be the sphere of radius $\sqrt{5}/2$ centered at $o$, and $M$ be the portion of $T$ outside $S$. Then $M$ is a nonnegatively curved surface which meets $S$ at a constant angle along its boundary, but is not simply connected. As discussed in Note \ref{note:torus}, one may also construct  nonconvex versions of this example.
\end{note}

\begin{note}
The compactness requirement in Theorem \ref{thm:main} may not be weakened to metric completeness (in the sense of Cauchy): take any smooth closed curve $\gamma\colon\S^1\to\S^2$, which is not convex, and let  $M$ be generated by $\lambda\gamma(t)$ for $\lambda\geq 1$.
\end{note}

\section{Preliminaries: Local Convexity}\label{sec:prelim}
A chief difficulty in working with nonnegative (as opposed to strictly positive) curvature is the absence of local convexity. We deal with this issue  by slicing the surface with hyperplanes that separate its interior points from its boundary, and thus generate convex caps, as we review in this section. More extensive background may be found in \cite{alexander&ghomi:chp,ghomi:rosenberg}.

\subsection{Basic terminology}\label{subsec:terminology}
Throughout this work, $\R^{n+1}$ denotes $(n+1)$-dimensional Euclidean space with standard metric $\langle\cdot,\cdot\rangle$ and origin $o$. Furthermore $\S^n$, $B^{n+1}$ denote respectively the unit sphere, and the (closed) unit ball  in $\R^{n+1}$.
Unless stated otherwise, we will assume that $M$ is a compact connected $(n\geq 2)$-dimensional manifold, with (nonempty) boundary $\d M$. We say that $M$ is a (topological) \emph{disk} if it is homeomorphic to $B^n$. An \emph{equatorial disk} is the intersection of $B^{n+1}$ with a hyperplane through $o$.
We always assume that $M$ is topologically immersed in $\R^{n+1}$, i.e., there exists a continuous locally one-to-one map $f\colon M\to\R^{n+1}$. We say that $M$ is $\C^k$ if $f$ is $\C^k$, and a subset of $M$ is \emph{embedded} if $f$ is one-to-one on that set. To reduce notational clutter, we will suppress $f$, and identify $M$ locally with its image under $f$. As far as the proof of Theorem \ref{thm:main} is concerned, we may assume without loss of generality that $M$ is \emph{orientable}, after replacing it by its double  cover if necessary. So we will assume that $M$ is orientable.
A \emph{convex body} $K\subset\R^{n+1}$ is a compact convex set with interior points.
We say that  $M$ is \emph{locally convex} at a point $p$ if there exists an open neighborhood $U$ of $p$ in $M$ which lies on the boundary of a convex body $K\subset\R^{n+1}$.  We say that $M$ is locally convex if it is locally convex everywhere, and $M$ is \emph{convex} if it lies embedded on the boundary of a convex set  with interior points in $\R^{n+1}$. If $M$ is $\C^2$ and has nonnegative (sectional) curvature, we say that it is \emph{infinitesimally convex}. Note that every $\C^2$ locally convex hypersurface is necessarily infinitesimally convex, but the  converse in general is not true.

\subsection{Convex caps}
A \emph{convex cap} $C$ in $\R^{n+1}$ is a convex disk whose boundary lies on a hyperplane $H$, while the rest of it does not. We say that $C$ is \emph{spherical} if it lies on a round sphere.
In \cite{vH:convex} van Heijenoort employed convex caps to show that  a complete locally convex hypersurface immersed in $\R^{n+1}$ is convex provided that it is \emph{locally strictly convex} at one point $p$; see also \cite{jonker&norman}. The latter condition  means that there passes a hyperplane through $p$ which intersects an open neighborhood of $p$ in $M$ only at $p$. In particular note that local strict convexity does not necessarily imply that the curvature is positive (e.g. consider the surface $z=x^4+y^4$ in $\R^3$). Sacksteder \cite{sacksteder:convex}  showed that a complete nonnegatively curved $\C^{n+1}$ hypersurface $M$ immersed in $\R^{n+1}$ is locally convex provided that it has a point of positive curvature, which yields the convexity of $M$ via van Heijenoort's theorem.
The following observation is a quick consequence of these results via a projective transformation:

\begin{lem}\label{lem:clipping}
Let  $H$ be a hyperplane, $H^+$ be one of the closed half spaces of $H$, and  $M^+$ be a component of $M$ in  $\inte(H^+)$. Suppose that $M^+$ is disjoint from $\d M$. Furthermore suppose that either $M$ is locally convex, or else is $\C^{n+1}$ and infinitesimally convex. Then the closure of $M^+$ is  a convex cap.
\end{lem}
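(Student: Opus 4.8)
The plan is to push the hyperplane $H$ off to infinity by a projective transformation, which converts $M^+$ into a complete hypersurface without boundary where the theorems of van~Heijenoort and Sacksteder apply, and then to transform back. After a rigid motion we may assume $H=\{x_{n+1}=0\}$ and $H^+=\{x_{n+1}\ge 0\}$, so that $\inte(H^+)=\{x_{n+1}>0\}$, and let $\Phi$ be the projective involution of $\RP^{n+1}$ given in affine coordinates by
\[
\Phi(x_1,\dots,x_{n+1})=\frac{1}{x_{n+1}}\,(x_1,\dots,x_n,1),
\]
which restricts to a diffeomorphism of $\inte(H^+)$ onto itself and carries $H$ to the hyperplane at infinity.

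First I would check that $\tilde M:=\Phi(M^+)$ is a complete $\C^\infty$ (resp.\ $\C^{n+1}$) hypersurface immersed in $\R^{n+1}$ with empty boundary, inheriting the convexity hypothesis on $M$. Its boundary is empty because $M^+$, being a component of the open set $M\cap\inte(H^+)$, is an open submanifold of $M$. It is complete because it is properly immersed: as $M$ is compact and $M^+$ is a component of $M\cap\inte(H^+)$ disjoint from $\d M$, the frontier of $M^+$ in $M$ lies in $M\cap H$, which $\Phi$ sends to infinity, so every divergent sequence in $\tilde M$ is unbounded in $\R^{n+1}$. Since $\Phi$ is projective and $M^+$ avoids the hyperplane $H$ that is blown up to infinity, $\Phi$ carries local supporting hyperplanes of $M^+$ to local supporting hyperplanes of $\tilde M$ and preserves the semidefiniteness of the second fundamental form; hence $\tilde M$ is locally convex if $M$ is, and infinitesimally convex if $M$ is.

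Next I would invoke the classical results. Suppose first that $\tilde M$ contains no straight line. If $M$ is locally convex then so is $\tilde M$, and $\tilde M$, not being a generalized cylinder, is the boundary of a convex body by van~Heijenoort's theorem; if instead $\tilde M$ is $\C^{n+1}$ and nonnegatively curved, then it is not everywhere flat (a complete flat hypersurface contains a line), so it has a point of positive curvature and is again the boundary of a convex body by Sacksteder's argument. But $\tilde M$ does contain no line, since the $\Phi^{-1}$-image of such a line would be a line lying in $M^+\subset M$, contradicting the compactness of $M$. Hence $\tilde M=\d\tilde K$ for some unbounded convex body $\tilde K$, which is line-free (else $\d\tilde K$ would contain a line) and contained in $\{x_{n+1}>0\}$ (else the connected set $\{x_{n+1}\le 0\}$, being disjoint from $\d\tilde K$, would lie in the interior of $\tilde K$ and force it to contain a line). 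In particular $\tilde M\cong\R^n$.

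Finally I would transform back. Since $\tilde K$ is convex, line-free, and contained in $\{x_{n+1}>0\}$, on which $\Phi^{-1}=\Phi$ is a smooth diffeomorphism, and since the compactness of $\ol{M^+}$ keeps $\Phi^{-1}(\tilde K)$ bounded, the set $K^+:=\Phi^{-1}(\tilde K)$ is convex with compact closure $\ol{K^+}\subset H^+$ meeting $H$, and $M^+=\Phi^{-1}(\tilde M)\subset\d\ol{K^+}$. Therefore $\ol{M^+}$ --- a topological disk, obtained from $\Phi^{-1}(\tilde M)\cong\R^n$ by adjoining its frontier $\d(\ol{M^+})\subset M\cap H$ --- lies on the boundary of the convex body $\ol{K^+}$, with $\d(\ol{M^+})$ on $H$ and the rest of $\ol{M^+}$ in $\inte(H^+)$, hence off $H$: this is precisely a convex cap. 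I expect this last step to be the main obstacle, because projective maps do not preserve convexity of arbitrary sets: one must use that $\tilde K$ is line-free --- which is forced by the compactness of $\ol{M^+}$, ruling out the cylindrical alternatives in the van~Heijenoort--Sacksteder dichotomy --- together with $\tilde K$ staying on one side of the critical hyperplane, to conclude that $\Phi^{-1}(\tilde K)$ is again convex with compact closure. The remaining points --- completeness and boundary-freeness of $\tilde M$, and the projective invariance of local convexity and of infinitesimal convexity --- are more routine.
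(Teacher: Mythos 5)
Your overall strategy is the same as the paper's (send $H$ to infinity by the projective map $(x,x_{n+1})\mapsto(x/x_{n+1},1/x_{n+1})$, apply Sacksteder or van~Heijenoort to the resulting complete hypersurface, and pull back), and several of your supporting arguments---properness of $\Phi(M^+)$, the exclusion of lines via compactness of $M$, the containment $\tilde K\subset\{x_{n+1}>0\}$---are fine. But there is a genuine gap at the point where you invoke the classical theorems. Van~Heijenoort's theorem is not of the form ``complete, locally convex, and not a generalized cylinder $\Rightarrow$ convex''; its hypothesis is the existence of a point of \emph{local strict convexity}, and Sacksteder's theorem (as used here) likewise requires a point of \emph{positive curvature}. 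You never produce such a point. Your substitute inference fails in both branches: for a topological locally convex hypersurface, ``contains no line $\Rightarrow$ has a strictly convex point'' is essentially the structure theorem you would need to prove, not something you may assume; and in the smooth branch, ``not everywhere flat $\Rightarrow$ has a point of positive curvature'' is false for $n\geq 3$ (e.g.\ $\S^2\times\R^{n-2}\subset\R^{n+1}$ is complete, nonnegatively curved, nowhere flat, and has no point where all sectional curvatures are positive; one would need the completeness of the relative nullity leaves to get from ``no line'' to ``a point of definite second fundamental form,'' and you do not make that argument).

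The paper closes exactly this gap \emph{before} applying the projective map, using the compactness of $\ol{M^+}$: choose a sphere $S$ containing $M\cap H$ but not the point of $\ol{M^+}$ farthest from $H$, and take a farthest point $q'$ of $\ol{M^+}$ from the center of $S$. Then $q'$ lies in $M^+$ (not on the frontier), the sphere through $q'$ supports $M^+$ there, so $q'$ is a strictly convex point, and it has positive curvature when $M$ is $\C^2$; this is the hypothesis that survives the projective transformation and feeds into van~Heijenoort/Sacksteder. You should add this step (or an equivalent one). A second, more minor point: your claim that ``the compactness of $\ol{M^+}$ keeps $\Phi^{-1}(\tilde K)$ bounded'' needs an argument (a convex set with bounded boundary data can a priori still be unbounded); the paper sidesteps this by simply replacing the convex set containing $M^+$ by a compact one, e.g.\ the convex hull of $\ol{M^+}$, before concluding that $\ol{M^+}$ is a convex cap.
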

\begin{proof}
First we show that $M^+$ contains a strictly convex point  which has positive curvature when $M$ is $\C^2$. Let $\d M^+$ denote the topological boundary of $M^+$ as a subset of $M$, and $\ol{M^+}:=M^+\cup\partial M^+$ denote its closure. $\ol{M^+}$ is compact since $M$ is compact. Let $q$ be a farthest point of $\ol{M^+}$ from $H$. Since $M\cap H$ is compact and $q\not\in H$, there exists a sphere $S$ which contains $M\cap H$ but not $q$. Let $q'$ be a farthest point of $\ol{M^+}$ from the center $o$ of $S$. Then $q'\in M^+$.
 Let $S'$ be the sphere of radius $\|oq'\|$ centered at $o$. Then $M^+$ lies inside $S'$ and intersects it at $q'$.  Hence $q'$ is the desired point.

Now identify $H$ with the hyperplane $x_{n+1}=0$ and suppose after a rescaling that $M^+$ lies in the slab $0 < x_{n+1} < 1$. Consider the projective transformation
\begin{equation}\label{eq:projective}
(x_1, \dots, x_{n},x_{n+1})\overset{P}{\longmapsto} \left(\frac{x_1}{x_{n+1}}, \dots, \frac{x_{n}}{x_{n+1}},\frac{1}{x_{n+1}}\right).
\end{equation}
If $M$ is $\C^2$ with nonnegative curvature, then $P(M^+)$ will be a complete nonnegatively curved hypersurface with a point of positive curvature, since projective transformations preserve sign of curvature. So $P(M^+)$ must be convex by Sacksteder's theorem \cite{sacksteder:convex}, which implies that $M^+$ must have been convex (projective transformations preserve convexity because they preserve line segments). If $M$ is a topological hypersurface which is locally convex, then $P(M^+)$ will be a complete locally convex hypersurface with a strictly convex point. Thus convexity of $P(M^+)$, and subsequently that of $M^+$ follow from the theorem of van Heijenoort \cite{vH:convex}. So we conclude that $M^+$ lies on convex set $K$ with interior points which lie on one side of $H$. Since $M$ is compact, we may assume that $K$ is compact as well.

Since $\ol{M^+}$  is locally embedded and $M^+$ is embedded, it follows that $\ol{M^+}$ is embedded. Since $\ol{M^+}$ is compact, it is  closed in $\d K$. So if $K\cap H$ has no interior points, then $\ol{M^+}=\d K$, which is a contradiction because $\d M\neq\emptyset$ by assumption. So $K\cap H$ must have interior points in $H$. Then the closure of $\d K\cap\inte(H^+)$, which coincides with $\ol{M^+}$, is a convex cap.
\end{proof}

\subsection{Clippings}
If $M$ is locally convex, then through each of its points $p$ there passes a locally supporting hyperplane, i.e., a hyperplane $H$ such that a neighborhood $U$ of $p$ in $M$ lies on one  side of $H$,  where by a \emph{side} we mean  one of the closed half-spaces of $\R^{n+1}$ determined by $H$. We say that $M$ is \emph{one-sided} provided that the side of $H$, say $H^+$, where $U$ lies may be chosen to depend continuously on $p$ (i.e., for every convergent sequence $H_m \to H_\infty$ of supporting hyperplanes of $M$,  we have $H_m^+\to H_\infty^+$). Then $N$ will be called the \emph{inward normal} of $M$, and we say that $M$ is locally convex with respect to $N$. We need to recall the following important fact which is implicit in \cite{alexander&ghomi:chp}:

\begin{lem}[\cite{alexander&ghomi:chp}]\label{lem:project}
Suppose that  $M$ is locally convex, one-sided,  and $\d M$ lies in the interior of a convex body $K$. Then there exists a one-sided locally convex immersed hypersurface $\tilde M$ homeomorphic to $M$ such that $\tilde M$ coincides with $M$ in $K$, while the rest of  $\tilde M$ lies on $\d K$.
\end{lem}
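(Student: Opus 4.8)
The plan is to build $\tilde M$ by keeping $M$ as it is inside $K$ and retracting the parts of $M$ that protrude from $K$ onto $\partial K$ by nearest-point projection. Let $\pi\colon\R^{n+1}\to K$ be the nearest-point projection onto $K$; it is $1$-Lipschitz, restricts to the identity on $K$, and carries $\R^{n+1}\setminus\inte(K)$ onto $\partial K$. Let $F\colon M\to\R^{n+1}$ be the map which is the identity on $M\cap K$ and equals $\pi$ on $M\setminus\inte(K)$; the two definitions agree on $M\cap\partial K$, so $F$ is continuous. Let $\tilde M$ be the immersed object given by $M$ equipped with the map $F$. By construction $\tilde M$ coincides with $M$ in $K$ and the rest of it lies on $\partial K$, so it remains to check that (1) $F$ is a topological immersion and $\tilde M$ is homeomorphic to $M$, that (2) $\tilde M$ is locally convex, and that (3) $\tilde M$ is one-sided.

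The crux is step (1): one must rule out that $\pi$ folds the parts of $M$ lying outside $K$. Let $M^+$ be a connected component of the compact set $M\setminus\inte(K)$. Since $\d M\subset\inte(K)$, the component $M^+$ is disjoint from $\d M$; moreover $M^+$ meets $\partial K$ (otherwise $M^+$ would be open and closed in the connected manifold $M$, forcing $M\cap K=\emptyset$), and its topological boundary $\d M^+$ in $M$ lies on $\partial K$. The structural fact I would invoke here, implicit in \cite{alexander&ghomi:chp}, is that each $\ol{M^+}$ is a \emph{convex cap over $\partial K$}: it is embedded, meets $M\cap K$ transversally along the $(n-1)$-dimensional manifold $\d M^+$, and $\pi$ maps it homeomorphically onto a topological disk of $\partial K$ bounded by $\d M^+$. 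Granting this, the retracted pieces $F(\ol{M^+})$ attach to $M\cap K$ exactly as the $M^+$ did, so $F$ is a topological immersion of $M$ onto a hypersurface homeomorphic to $M$.

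Steps (2) and (3) then follow from this description. A point of $\tilde M$ in $\inte(K)$ has a neighbourhood that is literally a piece of $M$, hence locally convex; we let it carry the inward half-space of $M$. A point of $\tilde M$ on $\partial K$ has a neighbourhood contained in $K$---made up of a piece of $M\cap K$ and a piece of $\partial K$---so every supporting hyperplane of $K$ at that point locally supports $\tilde M$ there; and at a seam point, where a retracted piece meets $M\cap K$ transversally, the convex-cap structure from (1) shows that the two faces meet along a convex ridge, so $\tilde M$ remains locally convex. Letting each point of $\partial K$ carry a supporting half-space of $K$, one checks from the one-sidedness of $M$ that this assignment of half-spaces is continuous across the seams, so $\tilde M$ is one-sided.

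It remains to indicate why $\ol{M^+}$ is a convex cap over $\partial K$; this is the main obstacle, and the place where the clipping machinery of \cite{alexander&ghomi:chp} is used in earnest. Put $r:=\max_{\ol{M^+}}\dist(\cdot,K)$ and let $q^\ast\in M^+$ realize this maximum. Then $q^\ast$ is an interior point of $M^+$ at which $\ol{M^+}$ is supported from outside by the convex body $K+rB^{n+1}$, so by local convexity $M$ is, near $q^\ast$, a thin convex graph trapped between $\d(K+rB^{n+1})$ and its tangent hyperplane at $q^\ast$, on which $\pi$ is injective. Sweeping the convex bodies $K+tB^{n+1}$ from $t=r$ down to $t=0$ and clipping $M^+$ against each successive boundary---so that at every stage only a small, already cap-shaped portion is being projected---propagates this local picture over all of $\ol{M^+}$; here the one-sidedness of $M$ is precisely what keeps the inward half-spaces coherently turned toward $K$ and thereby prevents $M^+$ from curling back so as to become tangent to the normal directions of $K$. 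Making this sweeping/clipping argument rigorous is the bulk of the work.
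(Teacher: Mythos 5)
Your construction hinges on the claim that the nearest-point projection $\pi$ onto $K$ maps each component $\ol{M^+}$ of $M\setminus\inte(K)$ homeomorphically onto a disk of $\d K$, and this claim is false: a locally convex piece protruding from $K$ can \emph{overhang}, and then $\pi$ folds. Concretely, let $K=B^3$ and let $M$ be a disk in the sphere $S$ of radius $1/10$ centered at $(21/20,0,0)$, chosen large enough that $S\setminus \inte(B^3)\subset M$ but with $\d M\subset\inte(B^3)$. Then $M$ is locally convex and one-sided, and $M^+=M\setminus\inte(B^3)$ is a cap of $S$ strictly larger than a hemisphere of $S$; the circle of points of $S$ where rays from $o$ are tangent to $S$ lies in $M^+$, and along it the radial map $p\mapsto p/\|p\|$ (which is what $\pi$ is outside $B^3$) has degenerate differential and is two-to-one nearby. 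So your map $F$ is not locally one-to-one, and the resulting $\tilde M$ is not an immersed hypersurface. The correct replacement for this overhanging cap is the disk of $\d K$ bounded by $\d M^+$ traversed once, which is \emph{not} the $\pi$-image of $M^+$ parametrized by $\pi$. The sweeping by the bodies $K+tB^{n+1}$ cannot repair this, because the obstruction already occurs for a single round spherical cap, where one-sidedness and local convexity hold perfectly; one-sidedness does not prevent $M^+$ from becoming tangent to the normal directions of $K$.

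The paper's proof avoids projections altogether. It approximates $K$ from outside by convex polyhedra $P_k\supset K$ with $\dist(P_k,K)<1/k$, and for each face hyperplane of $P_k$ invokes Lemma \ref{lem:clipping}: the components of $M$ beyond that hyperplane are genuine convex caps, and each is replaced by the \emph{flat convex disk} that its boundary bounds in the hyperplane---an operation that remains well defined and produces an immersion even when the cap overhangs. This yields locally convex immersed surfaces $M_k$ agreeing with $M$ in $K$ by \cite[Prop.\ 4.4]{alexander&ghomi:chp}; the uniform bounds on local radii of convexity from \cite[Prop.\ 6.3, 6.4]{alexander&ghomi:chp} then guarantee that $M_k$ converges to an immersed limit $\tilde M$ with the stated properties by \cite[Thm.\ 7.1]{alexander&ghomi:chp}. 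To salvage your outline you would have to abandon $\pi$ and substitute such an iterated clipping (or otherwise exhibit a parametrization of the disk of $\d K$ bounded by $\d M^+$ matching $M$ along the seam), which is precisely the machinery of \cite{alexander&ghomi:chp} that the lemma is quoting.
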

\begin{proof}
For any natural number $k$ there exists a convex polyhedron $P_k$ such that $K\subset P_k$ and the distance between $P_k$ and $K$ is less than $1/k$. For each face of $P_k$, via Lemma \ref{lem:clipping}, clip off the convex caps of $M$ determined by the hyperplane of that face and replace them by flat disks. This yields a sequence of locally convex hypersurfaces $M_k$ which coincide with $M$ in $K$ by \cite[Prop. 4.4] {alexander&ghomi:chp}. The local radii of convexity of $M_k$, as defined in \cite[Sec. 6]{alexander&ghomi:chp}, remain uniformly bounded by \cite[Prop. 6.3 and 6.4] {alexander&ghomi:chp}. Consequently
this sequence converges to the desired surface $\tilde M$ by \cite[Thm. 7.1]{alexander&ghomi:chp}.
\end{proof}

\subsection{Characterizations}
Here are a pair of useful criteria  for checking local convexity, which will be needed below:

\begin{lem}\label{lem:C1tan}
Let $M$ be $\C^1$, $N$ be a continuous normal vector field on $M$, and $T_p M^+$ be the side of $T_p M$ where $N(p)$ points. Suppose that every interior point of $M$ has an open neighborhood $U$ in $M$ which lies in $T_p M^+$. Then the interior of $M$ is locally convex.
\end{lem}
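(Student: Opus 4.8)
The plan is to show that each interior point $p\in M$ has a neighborhood lying on the boundary of a convex body, using the given tangency condition together with Lemma~\ref{lem:clipping}. The key observation is that the hypothesis $U\subset T_pM^+$ is exactly what is needed to produce, after a small perturbation, hyperplanes that cut off genuine convex caps near $p$, and these caps patch together to give local convexity.

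First I would fix an interior point $p$ and work in a small coordinate chart where $M$ is a graph over $T_pM$ with $N(p)$ pointing upward; by continuity of $N$ and the $\C^1$ hypothesis, on a small enough neighborhood $V$ of $p$ the surface lies in the upper half-space $T_pM^+$ and meets $T_pM$ only in a compact set containing $p$. Next, for a hyperplane $H$ obtained by tilting $T_pM$ slightly (equivalently, translating it upward by a small amount $\epsilon>0$ and perhaps tilting), the portion of $V$ below $H$ is a component $M^+$ of $M\cap\inte(H^+)$ that is disjoint from $\d M$ (since $p$ is interior and $V$ is small), and is nonempty for $\epsilon$ small because $p$ lies strictly below $H$. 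Here I must be slightly careful: Lemma~\ref{lem:clipping} is stated for $M$ either locally convex or $\C^{n+1}$ and infinitesimally convex, so to apply it directly I would either invoke a version for $\C^1$ hypersurfaces satisfying a one-sided tangency condition, or — cleaner — argue that the relevant component, being trapped between $H$ and $T_pM$ and pinched at $p$, is already a convex cap by a direct farthest-point argument as in the proof of Lemma~\ref{lem:clipping} (the farthest point from $H$ in the closure is an interior point of $M^+$ and is strictly convex there). In either case one concludes $\ol{M^+}$ is a convex cap, hence lies on the boundary of a convex body $K_\epsilon$.

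Then I would let $\epsilon\to 0$: the convex caps $\ol{M^+_\epsilon}$ form a nested family whose union exhausts a neighborhood of $p$ in $M$ lying strictly below $T_pM$ except at the pinch locus, and the associated convex bodies $K_\epsilon$ can be taken to converge (after normalization) to a convex body $K$ whose boundary contains a full neighborhood $U$ of $p$ in $M$. This shows $M$ is locally convex at $p$. Since $p$ was an arbitrary interior point, $\inte(M)$ is locally convex.

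The main obstacle I anticipate is the bookkeeping in the previous paragraph: ensuring the cut-off components near $p$ are nonempty, disjoint from $\d M$, and \emph{connected} in the right way so that Lemma~\ref{lem:clipping} applies cleanly, and then verifying that the limiting convex body really contains an honest open neighborhood of $p$ in $M$ on its boundary (as opposed to just a lower-dimensional piece). The tangency hypothesis $U\subset T_pM^+$ is what rules out the pinching degenerating — it forces the caps to have definite ``thickness'' transverse to $T_pM$ away from $p$ — but making this quantitative, or else phrasing the argument so that only qualitative convergence of convex bodies is needed, is where the care lies. I expect the cleanest route is to reduce to a single application of Lemma~\ref{lem:clipping} with a tilted hyperplane (rather than a limit of translates), choosing the tilt so that $p$ itself becomes the farthest point, which immediately exhibits $p$ as a strictly convex point and trivializes the passage to a neighborhood.
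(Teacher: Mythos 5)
Your reduction to slicing near $T_pM$ stalls at the crucial step: the claim that the cut-off piece $\ol{M^+}$ ``is a convex cap by a direct farthest-point argument'' does not follow. The farthest-point argument in the proof of Lemma \ref{lem:clipping} only produces a single locally strictly convex point (or a point of positive curvature); to conclude that the whole component is convex one must then invoke van Heijenoort's theorem, which requires the hypersurface to be locally convex everywhere---exactly the conclusion you are trying to prove---or Sacksteder's theorem, which requires $\C^{n+1}$ regularity and nonnegative curvature. Neither is available for a merely $\C^1$ hypersurface satisfying a one-sided tangency condition, and for the same reason Lemma \ref{lem:clipping} itself cannot be applied here. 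Your closing remark that making $p$ the farthest point ``immediately exhibits $p$ as a strictly convex point and trivializes the passage to a neighborhood'' is also not right: local strict convexity at a point (a hyperplane meeting a neighborhood only at that point) does not imply that a neighborhood lies on the boundary of a convex body.

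The missing ingredient is a local-to-global support theorem. The paper's proof takes a small ball $B$ about $p$, lets $M_p$ be the component of $M\cap B$ through $p$, closes it up with the piece $\d B^+$ of $\d B$ cut off by $\d M_p$ to obtain a closed embedded hypersurface bounding a compact region $K^+$ (the one into which $N(p)$ points), and observes that the tangency hypothesis makes $\inte(K^+)$ ``weakly supported locally'' at every boundary point. Tietze's theorem then gives convexity of $K^+$ outright, with no slicing, no limit of convex bodies, and no appeal to van Heijenoort or Sacksteder. Even if you insist on a slicing argument, you would still need such a support theorem to show each slice is convex, so the detour buys nothing.
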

\begin{proof}
Let $B$ be a ball centered at $p$, and $M_p$ be the component of $M$ inside $B$ which contains $p$. Assuming $B$ is sufficiently small, $M_p$ is a disk which meets $\d B$ precisely along its boundary $\d M_p$. By the Jordan-Brouwer theorem, $\d M_p$ separates $\d B$ into a pair of hypersurfaces  $\d B^\pm$ bounded by $\d M_p$. These generate closed embedded hypersurfaces $ M_p\cup\d B^\pm$ of $\R^{n+1}$, which bound compact regions $K^\pm$ respectively. Let $K^+$ be the region into which $N(p)$ points.  Then the interior of $K^+$ forms a connected open set  which is ``weakly supported locally" \cite[Def. 4.8]{valentine:book} at each point of its boundary $\d K^+=M_p\cup\d B^+$. This means that through each point of $\d K^+$ there passes a hyperplanes with respect to which a neighborhood of that point in $K$ lies on one side. Thus, by a theorem of Tietze \cite{tietze1929}, see \cite[Thm. 4.10] {valentine:book}, $K^+$ is convex. So $M_p$ is convex.
\end{proof}

\begin{lem}\label{lem:hessian}
Let $M$ be $\C^2$, and $N$ be a continuous normal vector field on $M$. Suppose that the second fundamental form of $M$ is everywhere positive semidefinite with respect to $N$. Then the interior of $M$ is locally convex.
\end{lem}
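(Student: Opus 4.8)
The plan is to reduce the statement to Lemma \ref{lem:C1tan}: it suffices to show that every interior point $p$ of $M$ has an open neighborhood $U$ in $M$ which lies in the side $T_pM^+$ of $T_pM$ into which $N(p)$ points. To verify this I would pass to a local graph representation. After a rigid motion assume $p=o$, $T_pM=\mathbf{R}^n\times\{0\}$, and $N(p)=e_{n+1}$. Since $M$ is a $\C^2$ immersed hypersurface, a neighborhood $U$ of $p$ in $M$ is the graph $\{(x,u(x)):x\in V\}$ of a $\C^2$ function $u$ over a convex neighborhood $V$ of $o$ in $\mathbf{R}^n$, with $u(o)=0$ and $\nabla u(o)=0$.

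The next step is to pin down $N$ on $U$ and read off what the curvature hypothesis says. The upward unit normal of the graph, $\nu(x)=(-\nabla u(x),1)/\sqrt{1+|\nabla u(x)|^2}$, is a continuous unit normal field on $U$ with $\nu(o)=e_{n+1}=N(p)$; since $N$ and $\nu$ are continuous unit normal fields agreeing at one point of the connected set $U$, we get $N=\nu$ on $U$ (after shrinking $V$ if necessary so that $U$ is connected). A direct computation in the coordinate frame $\partial_i\mapsto(e_i,\partial_i u)$ gives the second fundamental form of the graph with respect to $\nu$ as $\II=\nabla^2 u/\sqrt{1+|\nabla u|^2}$, so its components are positive proportional to those of the Hessian matrix $(\partial_i\partial_j u)$. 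Hence the hypothesis that $\II$ is positive semidefinite with respect to $N$ at every point of $M$ forces $\nabla^2 u(x)\ge 0$ for all $x\in V$; that is, $u$ is a convex function on $V$.

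Finally, since $u$ is convex on the convex set $V$ with $u(o)=0$ and $\nabla u(o)=0$, the supporting hyperplane inequality yields $u(x)\ge u(o)+\langle\nabla u(o),x\rangle=0$ for every $x\in V$. Thus $U$ lies in $T_pM^+$. As $p$ was an arbitrary interior point, Lemma \ref{lem:C1tan} now applies and shows that the interior of $M$ is locally convex.

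I do not anticipate a genuine obstacle; the only points needing care are the bookkeeping of sign conventions—i.e. checking that ``positive semidefinite $\II$ with respect to $N$'' really corresponds to convexity of the local graph on the side of $N$, which is exactly why the identification $N=\nu$ matters—and the observation that the hypothesis must be used on an entire neighborhood of $p$ rather than at $p$ alone, since a single positive semidefinite Hessian at one point would not give $u\ge 0$ nearby.
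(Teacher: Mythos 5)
Your proof is correct and follows essentially the same route as the paper: both pass to a local graph representation over the tangent hyperplane and observe that positive semidefiniteness of the second fundamental form (with respect to the matching normal) is equivalent to positive semidefiniteness of the Hessian of the graphing function, hence to its convexity. The only cosmetic difference is the finish: the paper concludes directly from the convexity of the graphing function (citing Schneider), while you route through the supporting-hyperplane inequality and Lemma \ref{lem:C1tan}, which is also valid.
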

\begin{proof}
 Locally $M$ may be represented by graphs of functions over convex sets in the tangent hyperplanes of $M$. These functions will have positive semi-definite Hessians and hence will be convex \cite[Thm. 1.5.13]{schneider2014}.
\end{proof}

\subsection{Regularity}
For the rest of this work, unless stated otherwise, we will assume that $M$ is as in the statement of Theorem \ref{thm:main}.  We need $M$ to be at least $\C^{n+1}$  in order to apply theorems of Sacksteder \cite{sacksteder:convex}, and do Carmo and Warner \cite{docarmo&warner} which analyze the set of flat points of a surface. In particular see  \cite[Lem. 6]{sacksteder:convex}
which requires Sard's theorem \cite[Thm. 3.4.3]{federer:book}, and the subsequent remark \cite[p. 615]{sacksteder:convex}. Otherwise, $\C^2$ regularity would suffice in various lemmas below which do not use these theorems.

\section{Convexity of $\partial M$: Part I}\label{sec:dM}

As we mentioned above, the first step in proving Theorem \ref{thm:main} is to show that every
component $\Gamma$ of $\d M$ is convex in $\S^n$, i.e., it is embedded and bounds a convex set  $X\subset\S^n$. We recall that $X\subset\S^n$ is said to be convex if and only if the cone generated by rays emanating from   $o$ and passing through  points of $X$  forms a convex set in $\R^{n+1}$.
For $n\geq 3$, which we consider first, convexity of $\Gamma$ follows quickly from the free boundary condition, which completely determines the second fundamental form $\II$ of $\d M$ in $M$; specifically, we recall the following observation, which is essentially proved in \cite[Lem. 2]{RV95}. This fact does not depend on the curvature of $M$.

\begin{lem}[\cite{RV95}]\label{lem1}
Let $\nu$ be the outward conormal vector along $\d M$, $p\in \d M$, and $\II$ be the second fundamental form of $\d M$ in $M$ at $p$ with respect to $-\nu$. Then $\nu(p)=\pm p$, and
$$
\II(\cdot,\cdot)=\langle p,\nu(p)\rangle\langle \cdot,\cdot\rangle=\pm \langle \cdot,\cdot\rangle
$$
accordingly, where $\langle \cdot,\cdot\rangle$ is the Euclidean metric. In particular, when $n=2$, the geodesic curvature of $\d M$ in $M$ with respect to $-\nu$ is given by
$$
k(p)=\langle p, \nu(p)\rangle=\pm 1.
$$
\end{lem}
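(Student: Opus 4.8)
The plan is to derive everything from two elementary facts: the outward unit normal of $\S^n$ at a point $p$ is the position vector $p$, and the free boundary condition forces $p$ to lie in $T_pM$.

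First I would fix conventions and establish that $\nu(p)=\pm p$. Since $\S^n$ is the unit sphere centered at $o$, its unit normal at $p\in\S^n$ is $p$ itself, so $T_p\S^n=p^{\perp}$. The free boundary condition says that $T_pM$ and $T_p\S^n$ are orthogonal hyperplanes of $\R^{n+1}$, equivalently that the normal line $\R p$ of $\S^n$ lies in $T_pM$; hence $p\in T_pM$. On the other hand $\partial M\subset\S^n$ gives $T_p(\partial M)\subseteq T_p\S^n=p^{\perp}$, so $p$ is orthogonal to $T_p(\partial M)$. Thus $p$ is a unit vector of $T_pM$ normal to the hyperplane $T_p(\partial M)$ of $T_pM$, and $\nu(p)$ is, by definition, the unique such vector up to sign. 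Therefore $\nu(p)=\pm p$, and accordingly $\langle p,\nu(p)\rangle=\pm 1$.

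Next I would compute the second fundamental form. Put $\phi(q):=\langle q,\nu(q)\rangle$ for $q\in\partial M$; by the previous step $\phi$ is continuous and $\{-1,+1\}$-valued, hence locally constant, and along $\partial M$ one has the vector identity $\nu=\phi\,\mathrm{id}$, where $\mathrm{id}$ is the position vector field. Let $\nabla$ and $D$ be the Levi--Civita connections of $M$ and of $\R^{n+1}$. For $X,Y\in T_p(\partial M)$, taking $\II$ with respect to $-\nu$ and using that $-\nu$, $X$, $Y$ all lie in $T_pM$,
\[
\II(X,Y)=\langle\nabla_XY,-\nu\rangle=\langle D_XY,-\nu\rangle=\langle Y,D_X\nu\rangle,
\]
where the middle equality holds because $D_XY$ and $\nabla_XY$ differ by a vector normal to $M$, which is orthogonal to $-\nu\in T_pM$, and the last equality comes from differentiating $\langle Y,\nu\rangle\equiv 0$ along $\partial M$. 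Since $\phi$ is locally constant, $D_X\nu=D_X(\phi\,\mathrm{id})=(X\phi)\,p+\phi(p)\,X=\phi(p)\,X$, so
\[
\II(X,Y)=\phi(p)\,\langle X,Y\rangle=\langle p,\nu(p)\rangle\,\langle X,Y\rangle,
\]
which is the asserted formula, with sign matching that of $\nu(p)=\pm p$. The case $n=2$ then follows by evaluating this bilinear form on a unit tangent vector of $\partial M$.

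The computation carries no analytic difficulty; the one point that genuinely requires care is the first step, namely the correct translation of the free boundary (orthogonality) condition into the statement $p\in T_pM$, together with keeping the sign conventions for the conormal and for $\II$ consistent. Note that the curvature of $M$ plays no role, as anticipated in the remark preceding the lemma; the argument above is a self-contained version of \cite[Lem. 2]{RV95}.
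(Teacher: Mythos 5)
Your proof is correct. The paper itself gives no proof of this lemma---it simply cites \cite[Lem.\ 2]{RV95}---and your argument is the standard one that reference supplies: orthogonality of $M$ and $\S^n$ forces the sphere's normal $p$ into $T_pM$, whence $\nu(p)=\pm p$, and differentiating the identity $\nu=\langle\cdot,\nu\rangle\,\mathrm{id}$ along $\partial M$ gives $\II=\langle p,\nu(p)\rangle\langle\cdot,\cdot\rangle$. The sign conventions are consistent with the paper's (e.g.\ they reproduce $k\equiv 1$ when $M$ meets $\S^2$ from the inside and $k\equiv -1$ from the outside, as used in Sections \ref{sec:dMII} and \ref{sec:dMIII}), so nothing needs to be added.
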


\noindent

Now let $R^{\Gamma}$, $R^{M}$ denote the Riemannian curvature tensors of $\Gamma$ and $M$ respectively, and $\{e_i\}$ be an orthonormal basis for $\Gamma$ at a point $p$. Then by Gauss' equation, and Lemma \ref{lem1},  we may compute that at $p$, for $i\neq j$,
\begin{equation*}
R^{\Gamma}_{ijij}=R^{M}_{ijij}+\II_{ii}\II_{jj}-\II_{ij}\II_{ji}=R^{M}_{ijij}+1-0\geq 1,
\end{equation*}
where subscripts indicate the coefficients of  these tensors with respect to $\{e_i\}$. So the sectional curvatures of $\Gamma$ are bounded below by $1$. Thus, by the theorem of do~Carmo and Warner \cite{docarmo&warner}, $\Gamma$ is convex in $\S^n$ when $n\geq 3$.

It remains then to consider the case where $n=2$, which will be significantly more involved, because a locally convex closed curve in $\S^2$ need not be globally convex, or even embedded. The arguments below will depend on whether $M$ lies outside or inside $\S^2$ near $\Gamma$, and will be presented in the next two sections respectively.  Note that $\nu(p)=p$ whenever $M$ meets $\S^n$ from the inside, and $\nu(p)=-p$ whenever $M$ meets $\S^n$ from the outside.

\section{Convexity of $\d M$: Part II} \label{sec:dMII}
Throughout this section we will assume that $n=2$, and $M$ lies outside $\S^2$ near a component $\Gamma$ of $\d M$.
In order to establish the convexity of $\Gamma$ in this case, we will have to show that  $M$ is locally convex. To start, let $U$ be a tubular neighborhood of $\Gamma$ in $M$. Assuming $U$ is small, $U\setminus\Gamma$ will lie outside of $\S^2$. Let $M_\Gamma$ be the closure of the component of $M$ outside of $\S^2$ which contains $U\setminus\Gamma$. We claim that $M_\Gamma$ is locally convex and one-sided, as defined in Section \ref{sec:prelim}. To this end first we show:

\begin{lem}\label{lem:inteM_Gamma}
There exists a unique continuous unit normal vector field $N$ on $M_\Gamma$ with respect to which the  interior of $M_\Gamma$ is locally convex.
\end{lem}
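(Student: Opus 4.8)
The plan is to build the normal field $N$ first on the boundary collar $U$, where the free boundary condition pins everything down via Lemma \ref{lem1}, and then propagate it across $M_\Gamma$ by continuity, simultaneously verifying local convexity in the interior using the infinitesimal hypotheses. Since $M$ is assumed orientable, $M_\Gamma$ carries a global continuous unit normal $N$; there are exactly two choices, and the task is to pick the correct one and show it works. First I would orient $N$ along $\Gamma$ so that it points ``the right way'': because $M$ lies outside $\S^2$ near $\Gamma$, we have $\nu(p)=-p$ (the outward conormal points inward), and by Lemma \ref{lem1} the second fundamental form of $\Gamma$ in $M$ with respect to $-\nu$ equals $+\langle\cdot,\cdot\rangle$, hence is positive definite. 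So I would choose $N$ on the collar $U$ so that $-\nu$ is the inward direction, i.e. $N$ agrees in orientation with $-\nu$ in the sense that $\II$ of $M$ itself (in the ambient space) restricted to $T\Gamma$ is positive semidefinite with respect to $N$. Concretely, one uses the splitting of the ambient second fundamental form of $M$ at $p\in\Gamma$ into its $T\Gamma$-part (which is $\pm\langle\cdot,\cdot\rangle$ by Lemma \ref{lem1}) and the conormal direction, and selects $N$ to make the $T\Gamma$-block nonnegative.

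Next I would argue that with this choice, the ambient second fundamental form of $M_\Gamma$ is positive semidefinite with respect to $N$ \emph{everywhere}, not just along $\Gamma$. This is where nonnegative curvature enters: $M$ is infinitesimally convex, so at each point the Gauss curvature (product of principal curvatures, or more precisely all sectional curvatures) is $\geq 0$, which forces the shape operator to be either positive semidefinite or negative semidefinite at each point — it cannot be indefinite. Thus the ``sign'' of $\II$ (as a quadratic form, up to the ambiguity of $N$) is locally constant on the open set of points where $\II\neq 0$, and the set of totally geodesic (flat) points is closed; a connectedness argument then shows the sign is globally constant once it is fixed on the collar. The subtlety is that at flat points $\II=0$ contributes no sign, so one cannot naively say ``$\II$ has constant sign''; instead I would invoke that $M_\Gamma\setminus\Gamma$ is connected, that $\II$ is positive semidefinite with respect to $N$ on a neighborhood of $\Gamma$, and that on the (open, dense-or-not) set where $\II\neq 0$ the sign relative to the continuous field $N$ cannot jump, while across flat points continuity of $N$ and of $\II$ prevents a sign reversal. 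Once $\II\geq 0$ with respect to $N$ throughout, Lemma \ref{lem:hessian} immediately gives that the interior of $M_\Gamma$ is locally convex with respect to $N$.

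Finally, uniqueness: if $N'$ is another continuous unit normal making the interior locally convex, then $N'=\pm N$ globally (connectedness + orientability), and $N'=-N$ is impossible because near $\Gamma$ local convexity with respect to $-N$ would force $\II\leq 0$ there, contradicting that $\II$ restricted to $T\Gamma$ equals $+\langle\cdot,\cdot\rangle>0$ (it is not identically zero). Hence $N'=N$.

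The main obstacle I anticipate is the sign-propagation step across flat points: one must rule out a configuration in which $\II$ is positive semidefinite with respect to $N$ near $\Gamma$, degenerates to $0$ on some separating set, and re-emerges as positive semidefinite with respect to $-N$ beyond it. The infinitesimal convexity of $M$ (no indefinite $\II$) plus continuity of $N$ handles this, but making the argument airtight — especially if the flat locus is large or complicated — is the delicate point; it is essentially a local statement about how a continuous family of semidefinite forms of fixed rank pattern cannot change sign, combined with the global connectedness of $M_\Gamma$. I would phrase it as: the function $p\mapsto$ (sign of the nonzero eigenvalues of $\II_p$ relative to $N(p)$) is well-defined and locally constant on $\{\II\neq0\}$, equals $+1$ near $\Gamma$, and by continuity of $\II$ and $N$ extends to the statement that $\II\succeq 0$ w.r.t. $N$ on all of $M_\Gamma$.
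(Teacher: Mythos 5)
Your proposal takes a genuinely different route from the paper, and the route has a fatal gap at exactly the point you flag as ``delicate.'' The paper does not attempt any sign--propagation of the second fundamental form: it covers $\inte(M_\Gamma)$ by convex caps produced by Lemma \ref{lem:clipping} (each interior point of $M_\Gamma$ lies strictly outside $\S^2$, so a suitable plane between $\S^2$ and that point cuts off a component of $M$ disjoint from $\d M$, whose closure is a convex cap by Sacksteder's theorem after the projective transformation). It then \emph{defines} $N(p)$ as the normal pointing toward the cap through $p$, and the only real work is showing two caps through the same point cannot lie on opposite sides of $T_pM$; continuity is inherited from the caps, and the extension to $\d M_\Gamma$ uses orientability. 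In other words, the global input of Sacksteder's theorem is consumed \emph{before} the normal is chosen, not after.

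Your argument that ``$\II$ is positive semidefinite near $\Gamma$ and the sign cannot jump across flat points by continuity of $\II$ and $N$'' is false as a local statement. Sacksteder's own example $z=x^3(1+y^2)$, $|y|<1/2$ (quoted in the Note after Theorem \ref{thm:main}), is nonnegatively curved, has $\II\succeq 0$ for $x\geq 0$ and $\II\preceq 0$ for $x\leq 0$ with respect to a single continuous normal, and its flat locus $\{x=0\}$ separates the two signs; continuity is no obstruction because $\II\to 0$ from both sides. Ruling this out requires the structure theorem that flat components of a \emph{complete} nonnegatively curved hypersurface are convex planar sets and hence do not separate (\cite[Thm.\ 1]{sacksteder:convex}); the paper needs precisely this, plus a new boundary analogue (Lemma \ref{lem:localconvgammaII}), to carry out the connectedness argument you sketch --- and it does so only later, in Lemma \ref{lem:localconvex}, not here. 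A second, independent problem is your anchoring of $N$ on the collar: Lemma \ref{lem1} computes the second fundamental form of $\Gamma$ \emph{in} $M$ with respect to the conormal $\nu$ (a vector tangent to $M$), which says nothing about the ambient second fundamental form of $M$ restricted to $T\Gamma$ with respect to the normal $N$. The latter quantity is the geodesic curvature of $\Gamma$ in $\S^2$, whose sign is not determined by the free boundary condition; its nonnegativity is the content of Lemma \ref{lem:localconvexgamma}, which is proved \emph{using} the present lemma, so your proposed anchoring is unavailable (and circular).
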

\begin{proof}
By Lemma \ref{lem:clipping}, each point $p\in\inte(M_\Gamma)$ lies in the interior of a convex cap $C_p$. Let $N(p)$ be the unit normal vector of $M$ at $p$ which points to the side of $T_p M$ where $C_p$ lies. To see that $N(p)$ is well defined, i.e., it does not depend on the choice of a cap, let $C_p'$ be another convex cap which contains $p$ in its interior. Suppose, towards a contradiction, that $C_p$ and $C_p'$ lie on opposite sides to $T_p M$. Then $C_p\cap C_p'$ must lie in $T_p M$. In particular neither cap can lie completely inside the other, for else it would have to be flat, which is not possible. It follows then that $\d C_p\cap \d C_p'$ must contain at least a pair of points. Let $L$ be the line passing through these points. Furthermore,  let $H$, $H'$ be the planes on which $\d C_p$, $\d C'_p$ lie respectively. Then $H$, $H'$ both must contain $L$. On the other hand, $H$, $H'$ cannot coincide with $T_p M$. Thus $H\cap T_p M=L=H'\cap T_p M$. Consequently, $\d C_p\cap \d C'_p$ forms a line segment in $L$. This again would imply that one cap lies inside the other, which is impossible as we pointed out earlier.  So $N(p)$ is well-defined, as claimed. Next note that $N$ is continuous on $\inte(M_\Gamma)$, because it is continuous on each $C_p$.  Finally, we may extend $N$ continuously to the boundary of $M_\Gamma$, since as we mentioned in Section \ref{subsec:terminology}, we may assume that $M$ is orientable. More explicitly, there exists a continuous unit normal vector field $\nu$ on $M_\Gamma$. After replacing $\nu$ with $-\nu$, we may assume that $N=\nu$ on $\inte(M_\Gamma)$, since $\inte(M_\Gamma)$ is connected. Then we set $N=\nu$ on $\d M_\Gamma$ which completes the proof.
\end{proof}

Next we consider the local convexity of $M_\Gamma$ along $\Gamma$. To this end we need to study the behavior of $\Gamma$ in $\S^2$. For the rest of this section, unless indicated otherwise, $N$ will be the vector field given by the last lemma.

\begin{lem}\label{lem:localconvexgamma}
The geodesic curvature of $\Gamma$ in $\S^2$ is  nonnegative with respect to $N$.
\end{lem}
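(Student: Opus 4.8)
The plan is to argue by contradiction: suppose the geodesic curvature of $\Gamma$ in $\S^2$ is negative with respect to $N$ at some point $p$. The starting point is Lemma \ref{lem1}, which tells us that the geodesic curvature $k$ of $\Gamma$ \emph{in $M$} with respect to $-\nu$ equals $\langle p,\nu(p)\rangle = -1$ here (since $M$ lies outside $\S^2$ near $\Gamma$, so $\nu(p)=-p$). The idea is to relate the geodesic curvature of $\Gamma$ in $M$, the geodesic curvature of $\Gamma$ in $\S^2$, and the normal curvatures of $M$ along directions orthogonal to $\Gamma$, via the fact that $M$ meets $\S^2$ orthogonally. First I would set up a local orthonormal frame at $p$: let $e_1$ be the unit tangent to $\Gamma$, $e_2=\nu(p)$ the outward conormal of $\Gamma$ in $M$ (which is tangent to $\S^2$, by orthogonality), so that the plane $T_p M$ is spanned by $e_1, e_2$, and $N(p)$ is the unit normal to $M$. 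The curvature vector of $\Gamma$ in $\R^3$ at $p$ decomposes into its component along $N(p)$, which is $\II^M_{11}$ (the second fundamental form of $M$ in the direction $e_1$), and its tangential component in $T_p M$, which is $k\, e_2 = k\,\nu(p)$. Separately, the same curvature vector decomposes with respect to $\S^2$: its normal component along $p$ is $-1$ (since $\Gamma\subset\S^2$, the unit sphere), and its tangential-to-$\S^2$ part has length equal to the geodesic curvature $k_{\S^2}$ of $\Gamma$ in $\S^2$.

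The key geometric observation is that the curvature vector of $\Gamma$ in $\R^3$ lies in the plane spanned by $N(p)$ and $p$ (equivalently, spanned by $N(p)$ and $\nu(p)$, since $p = -\nu(p)$), because both $e_1\perp N(p)$, $p$. Writing $\gamma''(p) = a\, N(p) + b\, p$ for suitable scalars $a,b$, I can read off: the geodesic curvature of $\Gamma$ in $M$ is the coefficient of the $T_pM$-part, which involves $b$ (since $p \perp T_pM$ is false — rather $p = -\nu(p) = -e_2 \in T_pM$); and the geodesic curvature of $\Gamma$ in $\S^2$ is $|a|$ with a sign determined by $N$. Carrying out the bookkeeping, Lemma \ref{lem1} pins down $b = -1$, and then the geodesic curvature of $\Gamma$ in $\S^2$ with respect to $N$ equals $\langle \gamma''(p), N(p)\rangle = a = \II^M(e_1,e_1)$ evaluated with respect to $N$. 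Now the nonnegativity follows from Lemma \ref{lem:inteM_Gamma}: the interior of $M_\Gamma$ is locally convex with respect to $N$, hence (by Lemma \ref{lem:hessian}, or directly from local convexity) its second fundamental form is positive semidefinite with respect to $N$ on the interior, and by continuity $\II^M(e_1,e_1)\geq 0$ at the boundary point $p$ as well. Since $\II^M(e_1,e_1)$ is exactly the geodesic curvature of $\Gamma$ in $\S^2$ with respect to $N$, we are done.

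The main obstacle I anticipate is getting the signs and the frame conventions exactly right — in particular, verifying carefully that the outward conormal $\nu$ of $\Gamma$ in $M$ is tangent to $\S^2$ (this is precisely the free boundary condition), that $N(p)$ is orthogonal to $p$ (which should follow since $T_pM \supset T_p\S^2$-direction... no: rather $N(p)\perp e_1, e_2$ and $e_2 = \pm p$, so $N(p)\perp p$), and that the sign of the geodesic curvature of $\Gamma$ in $\S^2$ with respect to $N$ matches $+\II^M(e_1,e_1)$ rather than $-\II^M(e_1,e_1)$. A secondary subtlety is the passage from local convexity of $\inte(M_\Gamma)$ to the nonnegativity of $\II^M(e_1,e_1)$ at the boundary: this is a routine continuity argument provided $M$ is $\C^2$ up to the boundary and $\II^M$ of $M$ is continuous there, which it is. Once the linear algebra in $T_p\R^3$ is organized correctly, the proof is short.
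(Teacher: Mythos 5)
Your argument is correct and is essentially the paper's proof: both identify the geodesic curvature of $\Gamma$ in $\S^2$ with respect to $N$ with $\II_p(T,T)$ (using that $N$ is tangent to $\S^2$ and normal to $\Gamma$ by the free boundary condition), and then deduce nonnegativity by continuity from the local convexity of $\inte(M_\Gamma)$ given by Lemma \ref{lem:inteM_Gamma}. The detour through Lemma \ref{lem1} and the coefficient $b=-1$ is harmless but unnecessary, and note that the relevant implication is ``locally convex $\Rightarrow$ positive semidefinite second fundamental form,'' which is the converse of Lemma \ref{lem:hessian} but standard for $\C^2$ surfaces.
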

\begin{proof}
For every $p\in \Gamma$, $N(p)$ is normal to $\Gamma$ and tangent to $\S^2$. Thus it follows that the geodesic curvature of $\Gamma$ at $p$ is given by $k(p)=\II_p(T,T)$, where $\II_p$ is the second fundamental form of $M$ at $p$ with respect to $N$, and $T$ is a unit tangent vector of $\Gamma$ at $p$.  Take a sequence of points $p_i$ in $U\setminus \Gamma$ converging to $p$, and let $T_i\in T_{p_i}M$ be a sequence of unit tangent vectors converging to $T$. Then $\II_{p_i}(T_i, T_i)$ converges to $\II_p(T,T)$, since $M$ is $\C^2$. By Lemma \ref{lem:inteM_Gamma}, $\II_{p_i}(T_i,T_i)$ is nonnegative. So $\II_p(T,T)$ is nonnegative, which yields that $k\geq 0$ on $\Gamma$ as desired.
\end{proof}

To establish the convexity of $\Gamma$ it only remains  to check that it is simple. Indeed any simple spherical curve whose geodesic curvature is nonnegative with respect to a continuous normal vector field must be convex \cite[Lem. 2.2]{ghomi:verticesC}.

In the next lemma we need to apply the Gauss-Bonnet theorem to a nonsmooth surface. For this purpose we choose the theorem in  the book of Alexandrov and Zalgaller \cite[p. 192]{alexandrov-zalgaller} which mirrors the traditional version of the Gauss-Bonnet theorem, and applies to Alexandrov surfaces, i.e., $2$-dimensional manifolds with a metric whose curvature is bounded in the sense of Alexandrov. With the induced metric, all $\C^2$ surfaces immersed in $\R^3$ are examples of these objects, as are all locally convex surfaces, whose curvature in the sense of Alexandrov is nonnegative.

\begin{lem}\label{lem:disk}
 $M$ is homeomorphic to a disk.
 \end{lem}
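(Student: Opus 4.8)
The plan is to show that $M$, being a compact connected orientable surface with nonempty boundary, is planar, and then to pin down its genus (and number of boundary components) using Gauss--Bonnet on a suitable closed extension together with the curvature constraints we have already extracted. First I would recall that every component $\Gamma$ of $\d M$ is a simple closed curve in $\S^2$: when $M$ lies inside $\S^2$ near $\Gamma$ this is handled in the next section, and when $M$ lies outside, Lemmas \ref{lem:inteM_Gamma} and \ref{lem:localconvexgamma} give that the geodesic curvature of $\Gamma$ in $\S^2$ is nonnegative with respect to the continuous field $N$, so once embeddedness of $\Gamma$ is known it bounds a convex (hence disk) region $X_\Gamma\subset\S^2$ by \cite[Lem. 2.2]{ghomi:verticesC}. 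In either case each $\Gamma$ bounds a convex spherical disk, and in particular $\d M$ has no self-intersections.

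Next I would cap off $M$ to get a closed surface: glue to each boundary component $\Gamma_j$ of $M$ the flat disk it bounds in $\S^2$ (or the convex spherical cap it bounds), obtaining a closed surface $\ol M$ which is $\C^2$ and nonnegatively curved away from the finitely many seam curves $\Gamma_j$, and is a nonnegatively curved Alexandrov surface with respect to the induced metric, because along each seam the gluing is convex (the two pieces bend the same way, by the free-boundary relation of Lemma \ref{lem1} together with Lemma \ref{lem:localconvexgamma}). Applying the Gauss--Bonnet theorem of Alexandrov--Zalgaller \cite[p. 192]{alexandrov-zalgaller} to $\ol M$ gives $2\pi\chi(\ol M)=\int_{\ol M}\omega\geq 0$, where $\omega$ is the curvature measure; since $\ol M$ is closed and orientable this forces $\chi(\ol M)\in\{2,0\}$, i.e. $\ol M$ is a sphere or a torus. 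The torus case has total curvature zero, which would force the curvature measure to vanish identically and hence $M$ itself to be flat; a flat surface with free boundary on $\S^2$ is ruled out (e.g. the farthest-point argument in Lemma \ref{lem:clipping} produces a point of positive curvature, or one invokes that a flat complete piece cannot meet $\S^2$ orthogonally along a closed curve). Hence $\ol M\cong\S^2$.

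Finally, $M$ is obtained from the sphere $\ol M$ by deleting the interiors of $b$ disjoint embedded disks, one for each boundary component $\Gamma_j$; such a surface is a sphere with $b$ holes, so $\chi(M)=2-b$. It remains to see $b=1$. For this I would run Gauss--Bonnet again, now on $M$ itself with its boundary: $\int_M K\,dA+\int_{\d M}k_g\,ds=2\pi\chi(M)=2\pi(2-b)$. Along each $\Gamma_j$ the geodesic curvature of $\d M$ in $M$ is computed by Lemma \ref{lem1}: with respect to $-\nu$ it equals $\pm1$, the sign being $+1$ when $M$ meets $\S^2$ from inside and $-1$ when from outside; in terms of the inward normal the relevant contribution is nonpositive when $M$ is outside, so $\int_{\d M}k_g\,ds\le (\text{something controlled})$, while $\int_M K\,dA\ge 0$. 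Combining the sign information forces $2-b\ge 1$, hence $b=1$ and $\chi(M)=1$, so $M$ is a disk. The main obstacle I expect is the bookkeeping in this last step: one must correctly orient $\d M$ and the conormal, correctly identify which boundary components are ``inside'' versus ``outside'' type, and verify that the convex-cap analysis of Lemma \ref{lem:clipping} genuinely rules out a flat $\ol M$; all the geometric inputs (nonnegative curvature of $M$, the rigid value of $\II_{\d M}$, nonnegative geodesic curvature of $\Gamma$ in $\S^2$) are already in hand, so the proof is a matter of assembling them through two applications of Gauss--Bonnet.
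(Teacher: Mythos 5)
Your proposal has two genuine gaps, both stemming from the fact that Lemma \ref{lem:disk} sits \emph{before} the embeddedness and convexity of the boundary components in the paper's logical order. First, you cap off each $\Gamma_j$ with ``the flat disk it bounds in $\S^2$,'' which presupposes that each $\Gamma_j$ is a simple closed curve bounding a convex spherical region. For the outside-type components this is exactly what is proved at the end of Section \ref{sec:dMII}, using Lemma \ref{lem:localconvex}, which rests on Lemma \ref{lem:localconvgamma}, which in turn invokes Lemma \ref{lem:disk}; and for the inside-type components convexity of $\Gamma$ is only established in Section \ref{sec:dMIII}, whose opening reduction also cites Lemma \ref{lem:disk}. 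At this stage all you have is nonnegative geodesic curvature of the outside-type $\Gamma_j$ in $\S^2$ (Lemma \ref{lem:localconvexgamma}), and such a curve can a priori be non-simple, so the capping is not yet available.

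Second, even granting the caps, the glued surface $\ol M$ need not be a nonnegatively curved Alexandrov surface: along an outside-type seam the gluing is \emph{concave}, not convex. By Lemma \ref{lem1}, when $M$ meets $\S^2$ from outside, the geodesic curvature of $\Gamma$ in $M$ with respect to the inward conormal is $-1$, while the spherical cap contributes geodesic curvature at most comparable to that of $\Gamma$ in $\S^2$; the singular curvature on the seam is the sum of the two inward geodesic curvatures and is negative (e.g.\ a plane through $o$ glued to a hemisphere along a great circle carries curvature $-2\pi$ on the seam). So the inequality $2\pi\chi(\ol M)\geq 0$ does not follow, and likewise your final bookkeeping $2-b\geq 1$ fails because $\int_{\d M}k_g = \sum_j(\pm 1)\Length[\Gamma_j]$ has negative terms of uncontrolled size from the outside-type components. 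The paper avoids both problems by never using the caps or the value $k_g=-1$: it \emph{replaces} the outer portions of $M$ by pushing them onto a sphere $S$ of radius $1+\epsilon$ via Lemma \ref{lem:project}, producing $\tilde M'\cong M$ whose boundary geodesic curvature along the modified components converges, as $\epsilon\to 0$, to the geodesic curvature of $\Gamma_i$ in $\S^2$, which is nonnegative by Lemma \ref{lem:localconvexgamma}; the inside-type components contribute $+1$ by Lemma \ref{lem1}, the total curvature of $\tilde M'$ stays bounded below by a positive constant because part of it lies on $S$, and Gauss--Bonnet for Alexandrov surfaces then forces $\chi>0$. That projection step is the missing idea in your argument.
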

 \begin{proof}
 Let $\Gamma_1$, $\Gamma_2, \dots$ denote those components of $\d M$ near which $M$ lies outside $\S^2$. For each $i$ let  $U_i$ be a tubular neighborhood of $\Gamma_i$ in $M$. Let $S$ be the sphere of radius $1+\epsilon$ centered at $o$, and for each $i$ set $\Gamma_i':=U_i\cap S$. Choosing $\epsilon$ sufficiently small, we may suppose that $S$ meets every $U_i$ transversally so that $\Gamma_i'$ is a smooth closed curve. For all $i$, let $A_i$ be the annular region bounded by $\Gamma_i$ and $\Gamma_i'$.

By Lemma \ref{lem:clipping}, the parts of $M$ which lie outside $\S^2$ are locally convex. Thus we may project these parts  into $S$ via Lemma \ref{lem:project}. More specifically, by perturbing a sphere of radius $1+\epsilon/2$ between $\S^2$ and $S$, we obtain a closed surface $C$ which meets $M$ transversely, by the transversality theorem \cite{hirsch:book}. Then portions $M_C$ of $M$ which lie outside $C$ are manifolds whose boundaries lie strictly inside $S$. So we may apply Lemma \ref{lem:project} to $M_C$ with respect to  the convex body $K$ bounded by $S$. This results in an immersed surface $\tilde M$ homeomorphic to $M$ which coincides with $M$ inside $S$.

Let $\tilde M'$ be the closure of the surface obtained from $\tilde M$ be cutting off the annular regions $A_i$. Then $\tilde M'$ is homeomorphic to $M$, and  lies in $S$ near each $\Gamma_i'$. Let $N'$ be the inward conormal vector of $\d \tilde M'$ in $\tilde M'$ along  $\Gamma_i'$. Note that  for each $p\in\Gamma_i'$, $N'(p)$ points to the side of $T_p M$, say $T_p M^+$, where the inward normal $N(p)$ of $M$ points, because by Lemma \ref{lem:project} $\tilde M$ is one sided and coincides with $M$ inside $S$. Indeed $N$ is  the inward normal of $\tilde M$ on $A_i$, and so there exists an open neighborhood $U$ of $p$ in $\tilde M$ which lies in $T_p M^+$. In particular $U\cap \tilde M'$, which is an open neighborhood of $p$ in $\tilde M'$, lies in $T_p M^+$.  This shows that as $\epsilon\to 0$,   $N'$ converges to $N$. Thus the integral of the geodesic curvature of $\Gamma_i'$ in $\tilde M'$ with respect to $N'$ converges to the integral of the geodesic curvature of $\Gamma_i$ in $\S^2$ with respect to $N$, which is nonnegative by Lemma \ref{lem:localconvexgamma}.

Furthermore, along those boundary components of $\tilde M'$ where $\tilde M'$ meets $\S^2$ from the inside, the geodesic curvature is  positive by Lemma \ref{lem1}. Thus, for $\epsilon$ sufficiently small, the integral of geodesic curvature of $\d \tilde M'$ in $\tilde M'$ will be positive or else arbitrarily close to zero. In addition note that every point of $\tilde M'\subset\tilde M$ is either $\C^2$ and nonnegatively curved or else is locally convex, in which case its curvature is still nonnegative everywhere in the sense of Alexandrov \cite{alexandrov-zalgaller}. In addition, since parts of $\tilde M'$ coincide with $S$, its total curvature is positive and remains bigger than some positive constant as  $\epsilon\to 0$.  Thus, by the Gauss-Bonnet theorem for Alexandrov surfaces \cite[p. 192]{alexandrov-zalgaller}, $\tilde M'$ is a disk. So $M$ is a disk.
\end{proof}

Note that the last lemma implies in particular that $\d M$ is connected and so $\Gamma=\d M$. For the rest of this section we will use $\d M$ and $\Gamma$ interchangeably. We now can show:

\begin{lem}\label{lem:localconvgamma}
$M$ is locally convex along $\d M$ with respect to $N$.
\end{lem}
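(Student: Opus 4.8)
The plan is to show that $M$ is locally convex at every boundary point $p\in\d M=\Gamma$ with respect to the field $N$ furnished by Lemma \ref{lem:inteM_Gamma}. By Lemma \ref{lem:C1tan} applied to a $\C^1$ collar of $\Gamma$, it suffices to produce, for each $p\in\Gamma$, an open neighborhood $U$ of $p$ in $M$ lying on the side $T_pM^+$ of the tangent plane where $N(p)$ points. Since the interior points are already handled by Lemma \ref{lem:inteM_Gamma}, the whole issue is the behavior of the collar $M_\Gamma$ across and along $\Gamma$ itself.

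First I would record the geometric data at $p\in\Gamma$. Because $M$ meets $\S^2$ orthogonally from the outside, Lemma \ref{lem1} gives $\nu(p)=-p$ and tells us the second fundamental form of $\Gamma$ in $M$ with respect to $-\nu$ equals $-\langle\cdot,\cdot\rangle$; equivalently, the normal curvature of $M$ in the direction of the inward conormal $\nu=p$ points \emph{into} the ball, and Lemma \ref{lem:localconvexgamma} tells us the geodesic curvature $k(p)=\II_p(T,T)$ of $\Gamma$ in $\S^2$ with respect to $N$ is nonnegative. So with respect to $N$, the shape operator of $M$ at $p$ restricted to the $2$-plane $T_p M$ spanned by $T$ (tangent to $\Gamma$) and $\nu=p$ (the conormal) is positive semidefinite in the $TT$ slot. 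The key point to extract is that the full second fundamental form $\II_p$ of $M$ at each $p\in\Gamma$ is positive semidefinite with respect to $N$: the $TT$-entry is $k(p)\geq 0$ by Lemma \ref{lem:localconvexgamma}, and the $\nu\nu$-entry together with the cross term can be controlled because the interior of $M$ is infinitesimally convex (Lemma \ref{lem:clipping} gives local convex caps at all interior points, hence $\II\geq 0$ there), and $\II$ is continuous up to $\Gamma$ as $M$ is $\C^2$; taking a sequence $p_i\to p$ from the interior and arbitrary tangent directions $v_i\to v\in T_pM$ gives $\II_p(v,v)=\lim\II_{p_i}(v_i,v_i)\geq 0$. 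This is exactly the argument already used in the proof of Lemma \ref{lem:localconvexgamma}, now applied to \emph{all} tangent directions rather than just the one tangent to $\Gamma$.

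With $\II_p\geq 0$ with respect to $N$ on the closed collar $M_\Gamma$ (interior from Lemma \ref{lem:inteM_Gamma}, boundary by the limiting argument just described, consistency of orientations by the choice in Lemma \ref{lem:inteM_Gamma}), I would invoke Lemma \ref{lem:hessian}: a $\C^2$ surface whose second fundamental form is everywhere positive semidefinite with respect to a continuous normal is locally convex in its interior. But $\Gamma=\d M_\Gamma$ is a boundary, so a small extra step is needed: slightly enlarge $M_\Gamma$ to a $\C^2$ surface $\hat M$ containing $\Gamma$ in its interior (this is possible since $M$ is a $\C^\infty$ manifold and $\Gamma$ lies in the interior of the ambient immersed surface $M$, of which $M_\Gamma$ is a closed subdomain), with $\hat M\subset M$ near $\Gamma$, and extend $N$ continuously; then $\II\geq0$ holds on $\hat M$ near $\Gamma$ by continuity of $\II$, and Lemma \ref{lem:hessian} makes $\inte(\hat M)\supset\Gamma$ locally convex, hence $M$ is locally convex along $\Gamma$ with respect to $N$.

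The main obstacle is the one subtle point above: verifying that $\II_p$ is positive semidefinite with respect to $N$ at boundary points in \emph{all} tangent directions, not merely along $\Gamma$. The danger is that local convexity of the interior only yields $\II\geq0$ at interior points, and a priori an eigenvalue of $\II$ could dip negative in the limit as one approaches $\Gamma$ from a "bad" direction. But this cannot happen: $\II$ is a continuous tensor field on all of $M$ (which is $\C^\infty$), the set where it is positive semidefinite with respect to the continuous field $N$ is closed, and it contains $\inte(M_\Gamma)$, which is dense in $M_\Gamma$; hence it contains $\Gamma$ as well. Thus the closedness of the semidefiniteness condition, combined with density of the interior, closes the gap cleanly, and the rest is a direct appeal to Lemmas \ref{lem:hessian} and \ref{lem:C1tan}.
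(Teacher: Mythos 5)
Your first step---that $\II_p$ is positive semidefinite with respect to $N$ at every $p\in\Gamma$, by continuity of $\II$ and closedness of the semidefiniteness condition---is correct and is indeed the same limiting argument used to prove Lemma \ref{lem:localconvexgamma}. The gap is entirely in the final step. First, $\Gamma=\d M$ is the boundary of $M$, not an interior curve of some larger piece of $M$, so there is no extension $\hat M\subset M$ containing $\Gamma$ in its interior; any extension must be abstract, and then the assertion that ``$\II\geq 0$ holds on $\hat M$ near $\Gamma$ by continuity'' is false: semidefiniteness is a closed condition, not an open one, and since $\II_p$ may be degenerate at boundary points (flat points of $M$ can touch $\d M$---this is exactly the situation Lemma \ref{lem:localconvgammaII} must handle), a generic smooth extension can have $\II\not\geq0$ immediately beyond $\Gamma$. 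More seriously, even without extending, $\II\geq 0$ up to and including $\Gamma$ does \emph{not} imply local convexity, or even local one-sidedness, at $p\in\Gamma$. The proof of Lemma \ref{lem:hessian} writes the surface as a graph over a \emph{convex} set in $T_pM$; at a boundary point the natural domain is bounded by the projection of $\Gamma$, whose curvature with respect to the inward conormal equals $-1$ by Lemma \ref{lem1} (outside case), so the domain is locally the complement of a convex region and the argument breaks down.

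This failure is not hypothetical. Take $D=\{(s,t):t\geq -s^2/2\}$ and $f(s,t)=\tfrac{A}{2}t^2+\tfrac12 s^2t+\tfrac{C}{12}s^4$ with $A=\sqrt{2/3}$ and $C=\tfrac12+\sqrt{3/2}$. Then $f_{tt}=A>0$, $f_{ss}=t+Cs^2\geq \sqrt{3/2}\,s^2\geq0$ and $\det(\mathrm{Hess}f)=A(t+Cs^2)-s^2\geq0$ on $D$, so the graph of $f$ over $D$ is a $\C^\infty$ nonnegatively curved surface with boundary whose second fundamental form is positive semidefinite up to the boundary with respect to the upward normal, with $f(0,0)=0$ and $df(0,0)=0$; yet $f(s,-s^2/2)=(\tfrac{A}{8}+\tfrac{C}{12}-\tfrac14)s^4<0$, so the surface dips below its tangent plane at the origin along $\d D$. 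Hence pointwise semidefiniteness up to the boundary cannot yield the conclusion; one must use the additional structure of the free boundary. The paper does this by proving directly that a neighborhood of $p$ in $M$ lies in $(T_pM)^+$: it fibers that neighborhood by the planar convex caps $X_q\cap H_q$ obtained from Lemma \ref{lem:clipping} by slicing with tangent planes of $\S^2$ (thereby importing global convexity of those slices, not just infinitesimal convexity), shows each fiber lies on one side of the ray $oq'$, and extends $M$ across $\Gamma$ not by an arbitrary smoothing but by the cone over $\Gamma$ through $o$, whose one-sidedness at $p$ follows from Lemma \ref{lem:localconvexgamma} together with the fact that $o\in T_pM$; only then does Lemma \ref{lem:C1tan} apply.
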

\begin{proof}[Proof]
By Lemma \ref{lem:disk}, $\d M=\Gamma$. So, by Lemma \ref{lem:inteM_Gamma}, there exists an open neighborhood $U$ of
$\d M$ in $M$  such that $U\setminus\d M$ is locally convex with respect to $N$. We claim that  for every $p\in \d M$, $M$ lies locally on the side of $T_p M$, say $(T_p M)^+$, where $N(p)$ points.  This would complete the proof as follows. Extend $U$ to a larger surface $\tilde U$, by attaching to each point $p$ of $\d M$ a portion of the segment $op$, say of length $1/2$. Then $\tilde U$ satisfies the hypothesis of Lemma \ref{lem:C1tan}, assuming that the claim holds. Note that $\tilde U$ is $\C^1$ because it has  flat tangent cones at each point which vary continuously, see \cite[Lem. 3.1]{ghomi&howard:tancones}. Thus $\tilde U$ is locally convex, as desired. It remains then to establish the claim.

By Lemma \ref{lem:localconvexgamma},  there exists a simple segment of $\d M$, say $\Gamma_0$, which contains $p$ in its interior and lies in $(T_p M)^+$. Let $C$ be the  surface generated by rays which originate from $o$ and pass through all points of $\Gamma_0$. Then $C$ lies in $(T_p M)^+$. So to establish the claim it suffices to show that, near $p$, $M$ lies on the side of $C$
\begin{figure}[h]
\centering
\begin{overpic}[height=1.7in]{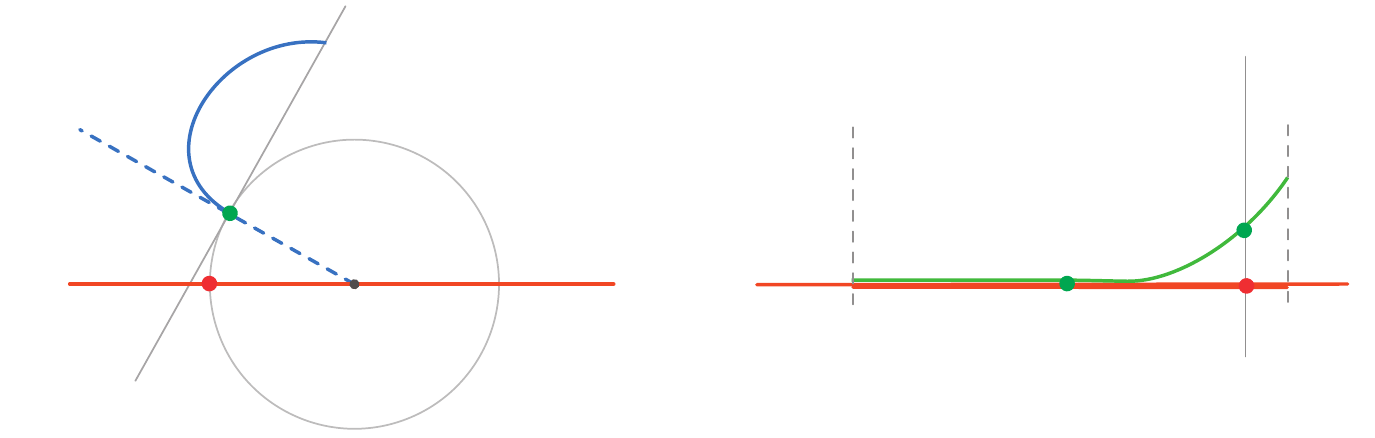}
\put(25, 9.5){\Small $o$}
\put(16,9.5){\Small $q$}
\put(16.5,14.5){\Small $q'$}
\put(77,9){\Small $p$}
\put(91,9.5){\Small $q$}
\put(91,14){\Small $q'$}
\put(1, 9){\Small $T_p M\cap H_q$}
\put(53, 8.75){\Small $G$}
\put(86,4){\Small $H_q\cap\S^2$}
\put(85, 14){\Small $\Gamma_0$}
\put(65, 8.5){\Small $G_0$}
\put(9,28.5){\Small $X_q\cap H_q$}
\put(2,23){\Small $C\cap H_q$}
\put(3,2){\Small $T_{q'}\S^2\cap H_q$}
\end{overpic}
\caption{}\label{fig:circle}
\end{figure}
where $N$ points.

Note that, by the free boundary condition, $T_p M$ passes through $o$ and so $G:= T_p M\cap\S^2$ is a great circle. We may assume that $\Gamma_0$ is a graph over a segment $G_0$ of $G$, i.e., every great circle orthogonal to $G_0$ intersects $\Gamma_0$  at most once and every point of $\Gamma_0$ lies on such a circle; see the right diagram in Figure \ref{fig:circle}.
Let $q\in G_0$, and $q'$ be the corresponding point in $\Gamma_0$, i.e,  the intersection with $\Gamma_0$ of the great circle which  is orthogonal to $G_0$ at $q$.

Let $(T_{q'}\S^2)^+$ be the side of $T_{q'}\S^2$ which does not contain $\S^2$. By Lemma \ref{lem:clipping}, $q'$ lies on the boundary of a convex cap $X_q$ in $(T_{q'}\S^2)^+$, with boundary on $T_{q'}\S^2$. Note that the ray $oq'$ of  $C$ does not intersect the interior of the convex hull $K_q$ of $X_q$, because $X_q$ is tangent to $T_{q'} M$, and hence lies on one side of $T_{q'} M$, while $oq'$ lies in $T_{q'} M$.
Now let $H_q$ be the plane which passes through $o$, $q$, and $q'$ and is orthogonal to $T_pM$. Since $H_q$ is transverse to $\Gamma_0$ and $\d X_q$ is tangent to $\Gamma_0$ at $q'$, we may suppose that $H_q$ intersects $X_q$ transversally.
So $K_q$ has interior points in $H_q$ and thus $H_q\cap K_q$ is a convex body in $H_q$. Consequently $X_q\cap H_q$ is a convex cap which lies on the boundary of $K_q\cap H_q$. In particular, since $oq'$ does not intersect the interior of $K_q$,  $X_q\cap H_q$ lies on one side of $oq'$ in $H_q$.

Note that the curves $X_q\cap H_q$ fibrate an open neighborhood $U$ of $p$ in $M$. Indeed we may take $U$ to be the union of the interior of $\Gamma_0$ with the interior of all caps $X_q$ for $q$ in the interior of $G_0$. Thus, since $X_q\cap H_q$ lies on one side of $oq'=C\cap H_q$, it follows that $U$ lies on one side of $C$, as desired. Finally, we check that this is the side of $C$ where $N$ points. To see this recall that, by Lemma \ref{lem:inteM_Gamma}, $N$ is the inward normal in the interior of $M$ near $\d M$. In particular, $N$ is the inward normal on the interior of each cap $X_q$. By continuity it follows that $N$ is the inward normal on all of $X_q$. Thus $X_q$ lies in the side of $T_{q'} M$ where $N(q')$ points, which yields that $U$ lies on the side of $C$ where $N$ points, and completes the proof.
\end{proof}

The principal step \cite[Thm. 1]{sacksteder:convex} in the proof of Sacksteder's theorem  is that each component  of the set of flat points of a complete nonnegatively curved hypersurface is a convex planar set; see also \cite[Lem. 3.1]{alexander&ghomi:chp} and \cite[p. 460]{greene&wu:rigidityII}. Here we need an analogue of this fact for surfaces with boundary, which constitutes the key observation in this section:

\begin{lem}\label{lem:localconvgammaII}
Let $p\in\d M$, and $X$ be the component of $T_p M\cap M$ which contains $p$. Then $X$ is fibrated by line segments which meet $\partial M$ orthogonally. In particular $M\setminus X$ is connected.
\end{lem}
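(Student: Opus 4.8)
The plan is to combine the classical description of the flat set from Sacksteder's work with the special boundary behavior enforced by Lemma \ref{lem1} and Lemma \ref{lem:localconvgamma}. Fix $p\in\d M$, let $H:=T_pM$ (which passes through $o$ by the free boundary condition), and let $X$ be the component of $H\cap M$ containing $p$. By the local convexity established in Lemma \ref{lem:localconvgamma}, $M$ lies locally on one side of $H$ near $p$; and since $M$ is infinitesimally convex, the points of $X$ are exactly the flat points of $M$ lying in $H$ near this component. First I would treat the interior points of $X$: away from $\d M$, $M$ is a $\C^\infty$ nonnegatively curved surface, so the classical Sacksteder analysis (\cite[Thm.\ 1]{sacksteder:convex}, or \cite[Lem.\ 3.1]{alexander&ghomi:chp}) applies to show that each component of the flat set, relative to the interior, is a convex planar region ruled by line segments. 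The issue is what happens at the boundary $\d M\cap X$, and that is where the free boundary condition does the work.

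The key local computation is this: at a point $q\in\d M\cap X$, the outward conormal $\nu(q)$ equals $-q$ (since $M$ lies outside $\S^2$ near $\Gamma$, by the convention recorded at the end of Section \ref{sec:dM}), and by Lemma \ref{lem1} the second fundamental form $\II$ of $\d M$ in $M$ at $q$ with respect to $-\nu=q$ is the full metric, hence \emph{positive definite}. In particular the geodesic curvature of $\d M$ in $M$ at $q$ (with respect to the appropriate normal) is nonzero. Since $q$ is a flat point of $M$, the second fundamental form of $M$ at $q$ vanishes, so the only direction in $T_qM$ along which $M$ can be ``ruled'' flatly must be transverse to $\d M$; more precisely, the shape operator of $M$ at $q$ is zero while the boundary curve $\d M$ has nonvanishing curvature vector inside $M$, which forces the asymptotic/ruling direction at $q$ to be the direction of $\nu(q)$, i.e.\ orthogonal to $\d M$. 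I would then argue that the ruling line segments of $X$ (coming from the Sacksteder structure in the interior) extend up to $\d M$ and, by continuity of the ruling together with the orthogonality just derived, meet $\d M$ orthogonally. Concretely: each flat component in the interior is a union of maximal segments; a segment cannot terminate in the interior of $M$ (flat points have no isolated boundary of the flat region in the interior), so it must run into $\d M$; and at its endpoint on $\d M$ the limiting direction is the ruling direction, which by the argument above is $\pm\nu$. This gives the fibration of $X$ by segments orthogonal to $\d M$.

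For the final clause, $M\setminus X$ is connected, I would reason topologically using the just-established structure. Since $M$ is a disk (Lemma \ref{lem:disk}) and $\d M=\Gamma$ is a single convex circle, and since $X$ is a convex planar region whose boundary consists of part of $\d M$ together with ruling segments meeting $\d M$ orthogonally at both ends — wait, each segment has \emph{one} end on $\d M$; the set $X$ meets $\d M$ in a single subarc (by convexity of $X$ and of $\Gamma$), and the opposite ``ends'' of the ruling segments trace out the part of $\d X$ interior to $M$. Removing such an $X$ from a disk leaves a connected set: $X$ is a ``collar-like'' piece attached along a single boundary arc of the disk, so $M\setminus X$ deformation retracts to the complementary boundary arc, hence is connected. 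I would make this precise by noting $\d X\cap \inte M$ is a single arc (the free endpoints of the segments, which vary continuously), so $X$ and $M\setminus X$ are the two pieces into which this arc plus the boundary arc of $\Gamma$ cut the disk $M$, and $M\setminus X$ is the one not meeting the interior of the arc of $\Gamma$ inside $X$ — a connected region.

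\textbf{Main obstacle.} The delicate point is the behavior of the ruling at the boundary: justifying that the maximal flat segments actually reach $\d M$ (rather than accumulating badly) and that their limiting directions are genuinely orthogonal to $\d M$ rather than merely tangent to $H$. The Sacksteder machinery is stated for complete hypersurfaces without boundary, so I expect the real work is to localize it near $\d M$ — perhaps by using the $\C^1$ extension $\tilde U$ of Lemma \ref{lem:localconvgamma} (attaching segments $oq$) to which one can apply the interior flat-set structure, and then reading off that the attached radial segments $oq$ are precisely the ruling directions. This would simultaneously give both that the segments meet $\d M$ and that they do so orthogonally, since $oq\perp T_q\S^2 = $ (the relevant hyperplane) is exactly the free boundary orthogonality.
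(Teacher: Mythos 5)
Your proposal has two genuine gaps, both at the heart of the lemma. First, the orthogonality argument does not work as stated. You assert that at $q\in\d M\cap X$ the second fundamental form of $M$ vanishes, and that the nonvanishing geodesic curvature of $\d M$ in $M$ then ``forces'' the ruling direction to be $\pm\nu(q)$. Neither step is justified: a point of $X=T_pM\cap M$ lying on $\d M$ need not be a flat point of $M$ (touching a supporting plane only makes the second fundamental form degenerate, not zero), and even at a genuinely flat point the shape operator being zero makes \emph{every} direction asymptotic, so it selects no ruling direction; the curvature of $\d M$ inside $M$ constrains whether a segment tangent to $\d M$ can lie in $M$, but it does not show that a segment arriving at $q$ must arrive radially. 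In the paper the orthogonality is not a pointwise curvature fact at all: it comes for free from showing that the fibering segments are \emph{radial}, i.e.\ lie on lines through $o$, and radial lines meet $\S^2\supset\d M$ orthogonally by the free boundary condition.

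Second, you correctly flag in your ``main obstacle'' paragraph that Sacksteder's flat-set structure theorem is an interior, complete-hypersurface statement and that the real work is to localize it at $\d M$ --- but that localization is precisely the content of the lemma, and the fix you sketch (apply the interior structure theorem to the $\C^1$ coned extension $\tilde U$) fails: \cite[Thm.~1]{sacksteder:convex} requires $\C^{n+1}$ regularity (it rests on Sard's theorem) and completeness, whereas $\tilde U$ is only $\C^1$ along $\d M$ and is merely a collar. The paper's mechanism is different and bypasses the flat-set theorem near the boundary entirely: cut $M$ by a plane $H$ through $o$ making a small wedge angle $\theta$ with $T_pM$; show, using the caps $X_q$ from the proof of Lemma \ref{lem:localconvgamma} and a uniform lower bound $d>0$ with $\theta<d$, that the resulting piece $M^+$ is locally convex; cone $M^+\cap\d M$ to $o$ and apply Lemma \ref{lem:clipping} to conclude that the coned piece $\ol{M^+}$ is a convex cap. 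Then $\ol{M^+}\cap T_pM$ is a convex set containing both $o$ and $p$, so for each of its points $x$ the segment $ox$ lies in it, and the portion of that radial line beyond $\S^2$ is the desired fiber through $x$. An open-and-closed argument then shows these fibered pieces cover $X$. Your concluding topological argument for the connectedness of $M\setminus X$ is reasonable in spirit once the fibration is available, but it also leans on $X$ being a convex region meeting $\Gamma$ in a single arc, which you have not established.
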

\begin{proof}
We will use the same setting and notation as in the proof of Lemma \ref{lem:localconvgamma}, and refer the reader to Figure \ref{fig:circleII}, which adds new details to Figure \ref{fig:circle}.  In particular, an important tool will be the fibration $X_q\cap H_q$ of the neighborhood of $\Gamma_0$ in $M$.

\begin{figure}[h]
\centering
\begin{overpic}[height=1.7in]{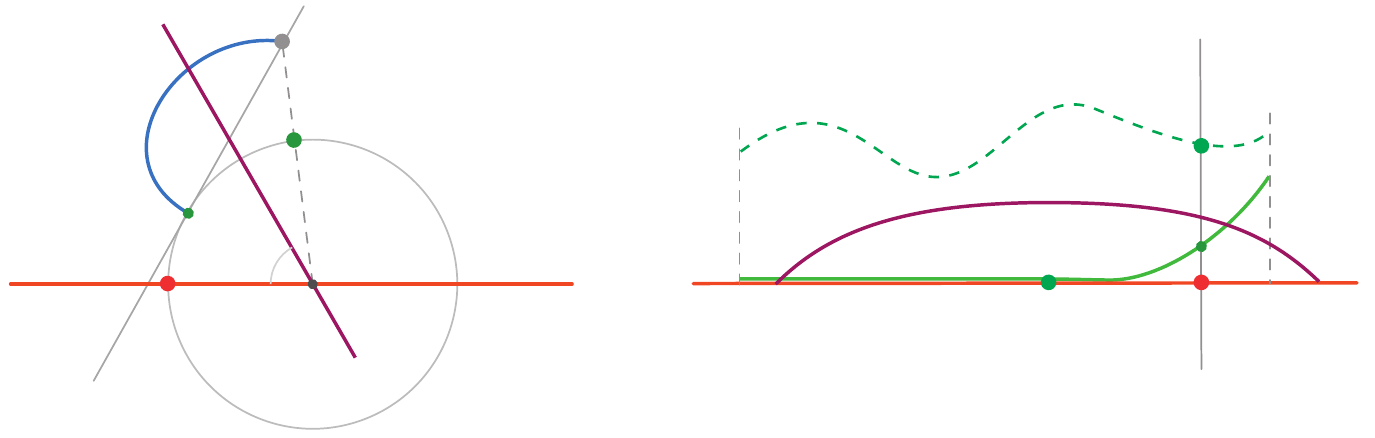}
%\put(21.5, 9.5){\Small $o$}
\put(18.75, 12.5){\Small $\theta$}
\put(12,9){\Small $q$}
\put(13.5,14.5){\Small $q'$}
\put(21.5,28){\Small $q''$}
\put(22.5,19.5){\Small $\ol q''$}
%\put(15,26){\Small $q'''$}
\put(75.75,9.25){\Small $p$}
\put(88,9.5){\Small $q$}
\put(88,13){\Small $q'$}
\put(88,22){\Small $\ol q''$}
\put(-2, 9){\Small $T_p M\cap H_q$}
\put(22.5, 4){\Small $H\cap H_q$}
\put(49, 8.75){\Small $G$}
\put(55, 15){\Small $H\cap\S^2$}
\put(83,3){\Small $H_q\cap\S^2$}
\put(83, 13.5){\Small $\Gamma_0$}
%\put(65, 8.5){\Small $G_0$}
%\put(2,24){\Small $X_q\cap H_q$}
%\put(1,2){\Small $T_{q'}\S^2\cap H_q$}
\end{overpic}
\caption{}\label{fig:circleII}
\end{figure}

Let $H$ be a plane different from $T_p M$ which passes through $o$, and such that the line $H\cap T_p M$ is orthogonal to $op$. Let $H^+$ be the side of $H$ where $p$ lies, $H^-$ be the opposite side, and $\theta$ be the angle of the wedge $H^+\cap (T_p M)^+$. By Lemma \ref{lem:localconvexgamma} we can make sure $\Gamma_0$ is long enough so that each end point of $\Gamma_0$ lies either in the interior of $(T_p M)^+$ or in the interior of $H^-$. Then, choosing $\theta$  sufficiently small, we may assume that both end points of $\Gamma_0$ lie  in the interior of $H^-$.  Let $M^+$  be the closure of the component of $M$ which lies in the interior of $H^+$ and contains $p$.
We claim that if
 $\theta$ is sufficiently small, then $M^+$ is locally convex.

To establish the claim first note that $M^+$ is locally convex along $\d M$ by Lemma \ref{lem:localconvgamma}. Thus it suffices to check the points of $M^+$ which lie in the interior of $M$. To this end let $q''$ be the end point of $X_q\cap H_q$, other than $q'$, and $\ol q''$ be the projection of $q''$ into $\S^2$. Furthermore let $d$ be the smallest geodesic distance between $\ol q''$  and $q$ in $\S^2$ for all $q\in G_0$, $d:=\inf_{q\in G_0}\dist_{\S^2}(q, \bar{q}'')$. Note that $d>0$, because the planes $T_{q'}\S^2$ which determine the convex cap $X_q$, are transversal to $M$ along $\d X_q$. So $\d X_q$ depends continuously on $q$, as $T_{q'}\S^2$ depends continuously on $q$. Furthermore, $H_q$ depends continuously on $q$ as well. Hence $q''$ depends continuously on $q$, since it is one of the two points where $H_q$ and $\d X_q$ meet. So $\dist_{\S^2}(q, \bar{q}'')$ depends continuously on $q$.
Finally, note that $q''\neq q'$ which yields that $\ol q''\neq q'$. Consequently $\ol q''\neq q$, since $q'$ lies in  the geodesic segment $q\ol q''$. So $\dist_{\S^2}(q, \bar{q}'')>0$, which yields that $d>0$ due to compactness of $G_0$. Now setting $\theta<d$ yields the desired angle, for then $M^+\cap\inte(M)$ is covered by the interior of convex caps.

Having established the local convexity of $M^+$, we now let $\ol {M^+}$ be the extension of $M^+$ which is obtained by connecting points of $M^+\cap \d M$ to $o$. Then $\ol {M^+}$ is a locally convex surface whose boundary lies in $H$, and therefore is a convex cap by Lemma \ref{lem:clipping}.
Let  $Y_p$ be the component of $X$ containing $p$ which lies in $H^+$, and $\ol Y_p$ be the extension of $Y_p$ obtained by connecting all points of $Y_p\cap\d M$ to $o$; see Figure \ref{fig:pie}. Then $\ol Y_p=\ol{M^+}\cap T_p M$. Thus $\ol Y_p$ is a convex set. In particular, for any point $x\in  Y_p$, the segment $ox$ is contained in $\ol Y_p$. Let $x'$ be the intersection of $ox$ with $\partial M$, and extend $ox$ until it intersects the boundary of $Y_p$ at another point, say $x''$. Then the segment $x'x''$ lies in $Y_p$, and thus we obtain a fibration of $Y_p$ by line segments orthogonal to $\partial M$.

\begin{figure}[h]
\centering
\begin{overpic}[height=1.2in]{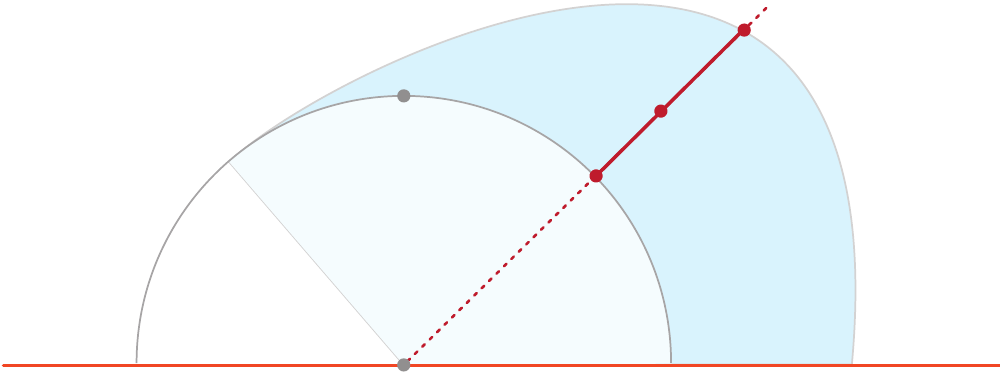}
\put(39.5, -2){\Small $o$}
\put(36, 11){\Small $\ol Y_p$}
\put(75, 19){\Small $Y_p$}
\put(59.5, 16.75){\Small $x'$}
\put(67,24){\Small $x$}
\put(76,33){\Small $x''$}
\put(40,25){\Small $p$}
\put(-17,-2){\Small $T_p M\cap H$}
\put(-4,18){\Small $T_p M\cap H^+$}
\end{overpic}
\caption{}\label{fig:pie}
\end{figure}

Note that the above construction may be carried out for any point $r$ of $X\cap\d M$ to yield a fibrated set $Y_r\subset X$ for each $r$. Thus, to complete the proof, it suffices to show that the sets $Y_r$ cover $X$. To this end we need to check that $Y:=\cup_{r\in X\cap\d M} Y_r$ is open and closed in $X$. To establish the closedness, let $x_i\in Y$ be a sequence of points converging to a point $x$ of $X$. Then $x_i'$ converge to $x'$. By assumption, the segments $x_i x_i'\subset Y_{x_i'}\subset X$. Thus, as $X$ is closed, $xx'$ lies in $X$. Consequently $x\in xx'\subset Y_{x'}\subset Y$. So $Y$ is indeed closed in $X$. It remains to check then that $Y$ is open in $X$. To see this let $x\in Y$. Then $x\in Y_{x'}$. Let $V$ be an open neighborhood of $x$ in $X$. We may assume that $V$ is connected and is so small as to be contained in the half-plane in $T_p M$ determined by the line orthogonal to $ox$, which passes through $o$. Recall that $Y_{x'}$ is by definition the connected component of $X$, containing $x'$, which lies in that half-plane. It follows then that $V\subset Y_{x'}\subset Y$, since $V\cup Y_{x'}$ is connected and lies in the half-plane. So $Y$ is open in $X$, and we are done.
\end{proof}

Now we are ready to prove the main result of this section:

\begin{lem}\label{lem:localconvex}
$M$ is locally convex.
\end{lem}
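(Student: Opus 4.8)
The goal is to upgrade the local convexity of $M$ along $\d M$ (Lemma \ref{lem:localconvgamma}) and in $\inte(M)$ near nonflat points (Lemma \ref{lem:inteM_Gamma}) to local convexity at \emph{every} point, with the only remaining difficulty being the interior flat points, i.e.\ points $p\in\inte(M)$ with $\II_p=0$. At such a point $T_pM\cap M$ has a component $X$ through $p$, and Lemma \ref{lem:localconvgammaII} tells us $X$ is fibrated by segments meeting $\d M$ orthogonally, so in particular $X$ reaches the boundary and $M\setminus X$ is connected. I would use this to show $X$ is a convex planar set and that $M$ lies locally on one side of $T_pM=\aff(X)$, which is exactly local convexity at $p$.

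First I would set up a consistent normal field: by Lemma \ref{lem:inteM_Gamma} there is a unique continuous unit normal $N$ on $M_\Gamma=M$ (by Lemma \ref{lem:disk}, $M=M_\Gamma$) making $\inte(M)$ locally convex at nonflat points, and by Lemma \ref{lem:localconvgamma} $M$ is locally convex along $\d M$ with respect to the same $N$. I would then argue that the second fundamental form $\II$ is everywhere positive semidefinite with respect to $N$: it is so at each nonflat interior point (that's the content of Lemma \ref{lem:inteM_Gamma}, since local convexity of a $\C^2$ surface forces $\II\geq 0$ for the inward normal), hence on the closure of the nonflat set, and at a flat point $\II_p=0$ is trivially semidefinite. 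So $\II\geq 0$ with respect to $N$ on all of $M$. Now Lemma \ref{lem:hessian} applies verbatim to give that $\inte(M)$ is locally convex, and Lemma \ref{lem:localconvgamma} already handles $\d M$. That completes the proof.

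Let me double-check that the step ``$\II\geq 0$ at nonflat points $\Rightarrow$ $\II\geq 0$ everywhere'' is legitimate and doesn't secretly need the flat-point analysis: $\II$ is a continuous tensor field (as $M$ is $\C^2$), and at a flat point it vanishes, so there is no closure issue at all — $\II$ is pointwise either $0$ or positive semidefinite by Lemma \ref{lem:inteM_Gamma}, hence positive semidefinite everywhere in $\inte(M)$, and then by continuity along $\d M$ as well (or directly from Lemma \ref{lem1}, which gives $\II=+\langle\cdot,\cdot\rangle>0$ along components where $M$ meets $\S^2$ from outside — precisely the case of this section). Thus Lemma \ref{lem:localconvgammaII} is in fact not needed for this particular lemma; it will presumably be used later (e.g.\ in the do Carmo–Lima argument, or to rule out flat components stretching to make $\ol M$ nonconvex). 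So the clean route is: (i) record that $N$ from Lemma \ref{lem:inteM_Gamma} extends continuously to $\d M$ and agrees there with the inward normal of Lemma \ref{lem:localconvgamma}; (ii) observe $\II\geq 0$ w.r.t.\ $N$ on all of $M$, using Lemma \ref{lem:inteM_Gamma} and continuity in the interior and Lemma \ref{lem1} on $\d M$; (iii) invoke Lemma \ref{lem:hessian} for the interior and Lemma \ref{lem:localconvgamma} for the boundary; (iv) conclude $M$ is locally convex. The only place requiring any care is making sure the normal orientations in Lemmas \ref{lem:inteM_Gamma}, \ref{lem:localconvexgamma}, \ref{lem:localconvgamma} are all the \emph{same} $N$ — but this was arranged explicitly in the proof of Lemma \ref{lem:inteM_Gamma} (choosing $\nu$ so that $N=\nu$ on $\inte(M_\Gamma)$ and extending) and carried through the subsequent lemmas, so no obstacle remains. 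I'd expect the write-up to be only a few lines.
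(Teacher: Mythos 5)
There is a genuine gap, and it sits exactly where you declare the difficulty absent. First, your identification $M_\Gamma=M$ is unjustified: Lemma \ref{lem:disk} gives $\d M=\Gamma$, but $M_\Gamma$ is by definition the closure of only that component of the part of $M$ lying outside $\S^2$ which contains $U\setminus\Gamma$, and $\inte(M)$ may well dip back inside $\S^2$ (or have other components outside it), where Lemma \ref{lem:inteM_Gamma} says nothing. So your appeal to Lemma \ref{lem:inteM_Gamma} at ``each nonflat interior point'' of $M$ is not available.

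Second, and more seriously, the step ``$\II\geq 0$ at nonflat points and $\II=0$ at flat points, hence $\II\geq 0$ everywhere with respect to the single continuous field $N$'' is a sign-propagation fallacy. Nonnegative curvature only tells you that at each nonflat point $\II$ is positive semidefinite with respect to \emph{one of the two} unit normals; this determines a consistent inward normal on each connected component of the nonflat set $M_1$, but nothing forces these componentwise choices to agree with $N$ on components of $M_1$ that are far from $\d M$. Continuity of $\II$ cannot transport the sign across a flat component of $M_0$: Sacksteder's example $z=x^3(1+y^2)$, recalled in the Introduction, has $\II$ semidefinite pointwise with respect to some normal at every point, yet with respect to no continuous choice of normal. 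Ruling this out is the entire content of the paper's proof: one shows that no component of $M_0$ separates a component of $M_1$ from the boundary collar $U$, using Sacksteder's Theorem 1 (interior components of $M_0$ are convex planar sets, hence nonseparating) together with Lemma \ref{lem:localconvgammaII} for the components of $M_0$ meeting $\d M$ --- precisely the lemma you concluded was not needed here.
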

\begin{proof}
By Lemma \ref{lem:localconvgamma}, $M$ is locally convex along $\d M$. So it remains  to check that the interior of $M$ is locally convex as well. To this end,  by Lemma \ref{lem:hessian}, it suffices to show that the second fundamental form of $M$ is positive semidefinite with respect to a continuous normal vector field. We claim that the desired vector field is given by $N$ once it is extended to all of $M$. By Lemma \ref{lem:localconvgamma}, the second fundamental form of $M$ with respect to $N$ will then be positive semidefinite in a connected neighborhood $U$ of $\d M$. Following Sacksteder \cite{sacksteder:convex}, we let $M_0$ be the set of flat points of $M$, and $M_1:=M\setminus M_0$. Note that each component of $M_1$ admits a unique choice of inward normal. Hence it suffices to show that each component of $M_1$ intersects $U$, or that $U\cup M_1$ is connected. Equivalently we need to show that no component of $M_0$ separates a component of $M_1$ from $U$. By \cite[Thm. 1]{sacksteder:convex}, if a component $X$ of $M_0$ lies in the interior of $M$ , then it is a convex planar set. So $M\setminus X$ is connected. On the other hand, if $X$ intersects $\d M$,  then again $M\setminus X$ is connected by Lemma \ref{lem:localconvgammaII}. Thus no component of $M_0$ separates $M$, which completes the proof.
\end{proof}

To establish the embeddedness, or simplicity, of $\Gamma$ we need only one more observation concerning general properties of spherical curves. The following lemma applies to all curves in $\S^2$ whose geodesic curvature is nonnegative with respect to a continuous normal vector field.

\begin{lem}\label{lem:hemisphere}
If $\Gamma$ is not simple, then either it  traces a great circle multiple times, or else it contains a subloop which lies in an open hemisphere.
\end{lem}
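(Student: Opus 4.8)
The plan is to argue by contradiction: suppose $\Gamma$ is not simple, does not trace a great circle multiple times, and no subloop of $\Gamma$ lies in an open hemisphere. I would first exploit the self-intersection. Since $\Gamma$ is not simple, there is a parameter pair $t_1<t_2$ with $\Gamma(t_1)=\Gamma(t_2)=:x$, and after passing to a minimal such interval we may assume the restriction $\gamma_0$ of $\Gamma$ to $[t_1,t_2]$ is a \emph{simple} loop based at $x$ (an innermost subloop). This $\gamma_0$ is a closed spherical curve whose geodesic curvature is nonnegative with respect to the restriction of the continuous normal field $N$ — possibly with one corner at $x$, whose exterior angle I would handle via the Gauss--Bonnet formula for piecewise-smooth curves. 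By hypothesis $\gamma_0$ does not lie in any open hemisphere.

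Next I would run a Gauss--Bonnet / total-curvature argument on the region it bounds. Being simple on $\S^2$, $\gamma_0$ bounds two closed disk regions $D^\pm$ with $\overline{D^+}\cup\overline{D^-}=\S^2$. Applying Gauss--Bonnet to $D^+$ gives $\int_{D^+}1 + \int_{\gamma_0}k_g + (\text{exterior angle at }x) = 2\pi$, i.e. $\mathrm{Area}(D^+) = 2\pi - \int_{\gamma_0}k_g - \alpha$, and similarly $\mathrm{Area}(D^-)=2\pi-\int_{\gamma_0}(-k_g)-\alpha' $ for the oppositely-cooriented side; here one must be careful that flipping the region flips the sign of $k_g$ and changes the exterior angle to $\pi$ minus itself twice. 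Adding the two relations recovers $\mathrm{Area}(D^+)+\mathrm{Area}(D^-)=4\pi$ as a consistency check; the useful content is that $k_g\ge 0$ with respect to $N$ forces $\mathrm{Area}(D^+)\le 2\pi-\alpha\le 2\pi$, so one of the two disks, say $D^+$, has area $\le 2\pi$. If that inequality were strict, $D^+$ would be contained in a closed hemisphere and in fact, after a small push, in an open one — contradicting the hypothesis that no subloop lies in an open hemisphere. Hence $\mathrm{Area}(D^+)=2\pi$ exactly, which forces $\int_{\gamma_0}k_g=0$ and $\alpha=0$, so $\gamma_0$ is a smooth closed geodesic of $\S^2$, i.e. a great circle traversed once.

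Finally I would bootstrap from ``one subloop is a great circle'' to ``$\Gamma$ traces a great circle multiple times.'' Knowing $\gamma_0$ is a great circle $G$, I would look at the rest of $\Gamma$: at the base point $x$, the incoming and outgoing arcs of $\Gamma$ not belonging to $\gamma_0$ must, by the local convexity (nonnegativity of $k_g$ with respect to the \emph{continuous} $N$) lie on the prescribed side of $G$ near $x$; but a curve of nonnegative geodesic curvature with respect to $N$ starting tangent to $G$ and staying on the $N$-side is trapped — one shows it cannot return to $x$ without re-creating a subloop in an open hemisphere unless it too coincides with $G$. Iterating (or arguing that $\Gamma^{-1}(G)$ is open and closed in $\S^1$, using smoothness and the ODE for geodesic curvature) shows $\Gamma(\S^1)\subset G$, and since $\Gamma$ is not simple it covers $G$ more than once. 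This last bootstrapping step is where I expect the real work to be: controlling the global behavior of the non-$\gamma_0$ part of $\Gamma$ from a purely local curvature sign, and in particular ruling out the case where $\Gamma$ leaves $G$, wanders, and comes back, without invoking convexity of $\Gamma$ itself (which is not yet available). Earlier results such as \cite[Lem. 2.2]{ghomi:verticesC}, which already packages ``simple + $k_g\ge 0$ $\Rightarrow$ convex,'' and the unique-continuation/identity principle for solutions of the geodesic-curvature ODE will be the tools to close it.
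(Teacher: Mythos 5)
Your argument has genuine gaps at two places, and they are not cosmetic. First, the step ``$\mathrm{Area}(D^+)<2\pi$ implies $D^+$ is contained in a (closed, then open) hemisphere'' is false: a thin tubular neighborhood of a great circle has arbitrarily small area but is contained in no hemisphere. What you actually need is that $D^+$ is \emph{convex}, and convexity of the innermost loop $\gamma_0$ does not follow from $k_g\geq 0$ alone, because $\gamma_0$ has a corner at the base point $x$ whose exterior angle $\alpha$ you do not control; if the corner is reflex, $D^+$ need not be convex and need not lie in any hemisphere. (The same uncontrolled $\alpha$ also invalidates the intermediate inequality $2\pi-\int k_g-\alpha\leq 2\pi$, which silently assumes $\alpha\geq 0$. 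Note also that \cite[Lem. 2.2]{ghomi:verticesC} is stated for curves without corners, so it cannot be invoked for $\gamma_0$ as is.) Second, the bootstrapping from ``some subloop is a great circle'' to ``$\Gamma$ traces a great circle multiple times'' is exactly the hard global step, and you acknowledge leaving it open; unique continuation for the geodesic-curvature ODE does not apply here since $k_g\geq 0$ is an inequality, not an equation, and a curve can leave a great circle tangentially with nonnegative curvature.

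The paper avoids both difficulties by not working with an innermost loop at all. It splits on whether $k_g\equiv 0$ on $\Gamma$ (in which case $\Gamma$ is a multiply traversed great circle and there is nothing to prove) or $\Gamma$ has a point $p$ with $k_g(p)\neq 0$. In the latter case it takes the great circle $C$ tangent to $\Gamma$ at $p$, so that near $p$ the curve enters the open hemisphere $H$ on the $N(p)$ side, and invokes \cite[Lem. 3.1]{ghomi:verticesC}: traversing $\Gamma$ forward from $p$, the first return to $C$ occurs at a point $q$ in the arc from the antipode $p'$ to $p$; traversing backward, the first return $r$ lies in the arc from $p$ to $p'$. These two arcs of $\Gamma$ both live in $\overline H$ and are forced to cross in the interior of $H$, producing a subloop meeting $C$ only at $p$, hence lying in an open hemisphere after a small tilt of $C$. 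If you want to keep your Gauss--Bonnet framework, you would at minimum have to (i) establish convexity of the innermost loop including the corner, or (ii) replace the area criterion by a genuine hemisphere criterion; either way the paper's tangent-great-circle argument is the more economical route.
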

\begin{proof}
If the curvature of $\Gamma$ is identically zero, then it traces a great circle and there is nothing to prove. Suppose then that $\Gamma$ has a point $p$ of nonzero curvature. Let $C$ be the great circle passing through $p$ and tangent to $\Gamma$ at $p$. Then  a neighborhood of $p$ in $\Gamma$ lies inside $C$, i.e., in the hemisphere $H$ bounded by $C$ where $N(p)$ points, and intersects $C$ only at $p$. If $\Gamma$ intersects $C$ at no point other than $p$,  we may  slightly shift $C$ to make it disjoint from $\Gamma$. Then $\Gamma$ will lie in an open hemisphere and we are done. So we may assume that $\Gamma$ intersects $C$ at some point
\begin{figure}[h]
\centering
\begin{overpic}[height=1.25in]{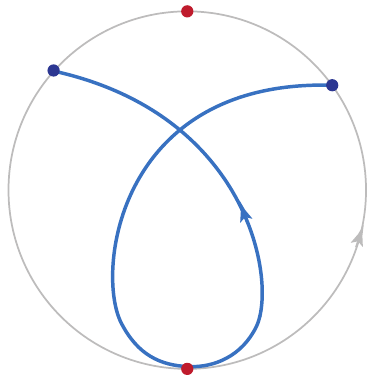}
\put(47,-6){\Small $p$}
\put(48,102){\Small $p'$}
\put(6,81){\Small $q$}
\put(90,81){\Small $r$}
\put(90,15){\Small $C$}
\end{overpic}
\caption{}\label{fig:loop}
\end{figure}
other than $p$. Orient $\Gamma$ and $C$ so that their orientations coincide at $p$, see Figure \ref{fig:loop}. Let $q$ be the first point in $\Gamma$ after $p$ where $\Gamma$ intersects $C$. We may assume that $p\neq q$, and the interior of $pq$ is simple for otherwise we are done (in the first case we obtain a loop intersecting $C$ at only one point, and in the second case we obtain a loop contained entirely in the interior of $H$). Then, by \cite[Lem. 3.1]{ghomi:verticesC}, $q$ must lie in the interior of the (oriented) segment $p'p$ of $C$, where $p':=-p$. Similarly let $r$ be the first point, as we traverse $\Gamma$ from $p$ against its orientation, that lies on $C$. Again we may assume that $rp$ is simple and $r\neq p$. Then   \cite[Lem. 3.1]{ghomi:verticesC} implies that $r$ must lie in the interior of the segment $pp'$. Hence the segments $pq$ and $rp$ must intersect in the interior of $H$. This yields a loop which lies inside $C$ and intersects $C$ only at $p$. So it must lie in an open hemisphere.
\end{proof}

Now we can show that $\Gamma$ is simple. First  connect all points of $\Gamma$ to $o$ by straight line segments. This extends $M$ to a closed surface $\ol M$. It follows from Lemma \ref{lem:localconvex} that $\ol M\setminus\{o\}$ is locally convex.
Suppose that $\Gamma$ is not simple. Then by  Lemma \ref{lem:hemisphere} either (i) $\Gamma$ is a multiple covering of a great circle or (ii) $\Gamma$ has a subloop which lies in an open hemisphere.

If $\Gamma$ multiply covers a great circle, then as in the proof of Lemma \ref{lem:disk}, we may apply Lemma \ref{lem:project} to $\ol M\setminus\{o\}$, and with respect to $K=B^{3}$, to obtain a compact locally convex surface $\tilde M$ homeomorphic to $M$, which is bounded by $\Gamma$ and coincides with $\S^2$ in a neighborhood of $\Gamma$. Then the total geodesic curvature of $\d \tilde M$ is zero, and so Gauss-Bonnet theorem for Alexandrov surfaces implies that the total curvature of $\tilde M$ must be $2\pi$. On the other hand, the Gauss map of $\tilde M$ sends $\d \tilde M$ to a multiple covering of a great circle in $\S^2$, which implies that the total curvature should be bigger than $2\pi$ and we obtain the desired contradiction.

So we may assume that $\Gamma$ has a subloop, say $\Gamma_0$, which lies in an open hemisphere.
 Then there exists a plane $H$ which separates $\Gamma_0$ from $o$. Let $X$ be the component of $\ol M$ which contains $\Gamma_0$ and lies on the side of $H$ not containing $o$. Then $X$ is convex and is therefore embedded by Lemma \ref{lem:clipping}. This is a contradiction because $X$ contains a double point by construction. Hence $\Gamma$ is indeed simple. As we had mentioned earlier, this together with Lemma \ref{lem:localconvexgamma} completes the proof of the convexity of $\Gamma$, due to the characterization for convex spherical curves in  \cite[Lem. 2.2]{ghomi:verticesC}.

\begin{note}
Trying to establish the convexity of $\d M$ in this section, we had to prove that $M$ is locally convex. These two facts now yield the convexity of $M$. Indeed connecting points of $\d M$ to $o$ by line segments yields a closed locally convex surface $\ol M$. By van Heijenoort's theorem, $\ol M$ is convex. Thus $M$ is convex. Further recall that $M$ is a disk by Lemma \ref{lem:disk}. So we have proved Theorem \ref{thm:main} in the case where $n=2$ and $M$ lies outside $\S^2$ near one of its boundary components.
\end{note}

\section{Convexity of $\d M$: Part III}\label{sec:dMIII}
To complete the proof of the convexity of the components of $\d M$ it remains to consider the case where $n=2$ and
$M$ lies inside $\S^2$ near a component $\Gamma$ of $\d M$, which we assume is the case throughout this section.
If $M$ meets any one of its boundary components from outside $\S^2$, then by Lemma \ref{lem:disk} that is the only boundary component it has and convexity of $\d M$ follows from the last section. So we may further assume that $M$ meets $\S^2$ from the inside along all of its boundary components. Recall that, by Lemma \ref{lem1}, the geodesic curvature $k\equiv 1$ on $\partial M$. Thus,
by the Gauss-Bonnet Theorem, we have
$$
2\pi \chi(M)=\int_{\d M}k +\int_{M} K=\Length[\d M] +\int_{M} K\geq \Length[\d M]>0.
$$
So the Euler characteristic $\chi(M)>0$, which means $M$ is a topological disk. In particular $\d M$ is connected, and so $\Gamma$ is the only boundary component of $M$. Thus
\begin{equation}\label{eq:Gauss-Bonnet}
\Length[\Gamma] +\int_{M} K=2\pi.
\end{equation}
The rest of the argument will be divided into two parts: (i) $K\not\equiv 0$, and (ii) $K\equiv 0$:

\subsection{($K\not\equiv 0$)} If $K$ is not identically zero, then $\Length[\Gamma] <2\pi$ by \eqref{eq:Gauss-Bonnet}. Furthermore, by Crofton's formula,
\begin{equation*}
\Length[\Gamma]=\frac{1}{4}\int_{p\in\S^2}\#(p^\perp\cap \Gamma)d\sigma,
\end{equation*}
where $p^\perp$ denotes the oriented great circle centered at $p$.
It follows that $\Gamma$ misses some great circle in $\S^2$, and therefore lies in an open hemisphere. In particular $\Gamma$ has a well-defined convex hull in $\S^2$ (given by the intersection of all closed hemispheres which contain $\Gamma$). Let $\Gamma'$ be the boundary of that convex hull, and let $M'$ be the convex  surface obtained by connecting $o$ to points of $\Gamma'$ with straight line segments. Since $M'$ is orthogonal to $\S^n$ along $\Gamma'$, again Lemma \ref{lem1} yields that the geodesic curvature $k'$ of $\Gamma'$ in $M'$ is identically one. Thus $\int_{\Gamma'}k'=\Length[\Gamma']$.
Then by the Gauss-Bonnet theorem for Alexandrov surfaces \cite[p. 192]{alexandrov-zalgaller} (recall the discussion prior to Lemma \ref{lem:disk}),
 \begin{equation}\label{eq:olGamma}
\Length[\Gamma'] +\int_{M'} K'=2\pi.
\end{equation}
But $\Length[\Gamma']\leq \Length [\Gamma]$. Further we claim that $\int_{M'}K'\leq \int_{M} K$. Then comparing \eqref{eq:Gauss-Bonnet} and \eqref{eq:olGamma} would yield that $\Length[\Gamma']= \Length [\Gamma]$, which may happen only if $\Gamma'=\Gamma$. Hence $\Gamma$ will be convex as desired.
So it remains only to check that $\int_{M'}K'\leq \int_{M} K$. To  this end first we show that

\begin{lem}\label{lem:mm'}
Every support plane of $M'$ passing through $o$ is parallel to a tangent plane of $M$.
\end{lem}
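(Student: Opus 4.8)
The plan is to use the Gauss map and the fact that both $M$ and $M'$ meet $\S^2$ orthogonally along their boundaries. Let $H$ be a support plane of $M'$ passing through $o$. Since $M'$ is the cone over the convex spherical curve $\Gamma'$ (the boundary of the spherical convex hull of $\Gamma$), support planes of $M'$ through $o$ are exactly the planes through $o$ which are support planes of the convex cone over $\Gamma'$; equivalently, $H\cap\S^2$ is a great circle supporting $\Gamma'$ in $\S^2$. Because $\Gamma'$ is the boundary of the convex hull of $\Gamma$, every closed hemisphere containing $\Gamma'$ also contains $\Gamma$, so $H\cap\S^2$ is a support great circle of $\Gamma$ as well: $\Gamma$ lies in one of the two closed hemispheres bounded by $H\cap\S^2$, and touches $H\cap\S^2$ at some point.

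Next I would run a simple "farthest point" argument on the Euclidean side. Let $H^+$ be the closed half-space bounded by $H$ on the side where $\Gamma$ (hence $M$ near $\d M$, since $M$ lies inside $\S^2$ there) does not lie; more precisely, translate $H$ parallel to itself, sweeping through $\R^3$ from the far side, until the last moment it still meets the compact set $M$. The resulting plane $H''$ is a support plane of $M$ parallel to $H$, and it touches $M$ at some point $x$. The key point is to rule out that $x\in\d M$: if $x$ were a boundary point, then since $M$ meets $\S^2$ orthogonally at $x$ and from the inside, Lemma \ref{lem1} forces $T_xM$ to pass through $o$, and the conormal $\nu(x)=x$; one checks this is incompatible with $H''$ being an extreme parallel translate of $H$ unless $H''=H$ and $x\in\Gamma\cap(H\cap\S^2)$ — and in that borderline case $T_xM\supset$ the line $ox$, while $T_xM$ must also contain the tangent direction of $\Gamma$ at $x$, which lies along the support circle $H\cap\S^2$; hence $T_xM=H$, so again $H$ itself is tangent to $M$. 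In the generic case $x\in\inte(M)$, so $H''$ is the tangent plane $T_xM$, and since $H''\parallel H$ we are done.

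I expect the main obstacle to be the borderline case analysis at boundary points: one must verify carefully, using the orthogonality (free boundary) condition encoded in Lemma \ref{lem1} together with the fact that $M$ lies \emph{inside} $\S^2$ near $\d M$, that a support plane parallel to $H$ cannot be "absorbed" into $\d M$ in a way that produces no parallel tangent plane in the interior. The cleanest way to handle this is probably to first note that if the extreme translate $H''$ coincides with $H$, then because $H$ is a support plane of the cone $M'$ through its apex $o$ and $\Gamma$ is tangent to $H\cap\S^2$ at the contact point, the tangent plane of $M$ there is forced to equal $H$; and if $H''\ne H$ lies strictly on one side, a short convexity/continuity argument (using that $M$ is connected and that near $\d M$ it stays inside $\S^2$, on the $\Gamma$-side of $H$) shows the contact point must be interior. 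Everything else is the standard "support plane of a compact surface is a tangent plane at an interior extreme point" observation.
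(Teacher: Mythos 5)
Your argument is correct and is essentially the paper's proof: sweep planes parallel to the given support plane $H$ of $M'$ to the extreme position where one last touches $M$, and in the boundary-contact case combine the inequality $\langle x,u\rangle\le 0$ (from $x\in\Gamma\subset M'$ and $H$ supporting $M'$ at $o$) with the conormal inequality $\langle x,u\rangle\ge 0$ coming from $\nu(x)=\pm x$, forcing the extreme plane to be $T_xM=H$ itself. The preliminary observations about support great circles of $\Gamma'$ versus $\Gamma$ are true but not needed for the argument.
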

\begin{proof}
Let $u$ be the outward to  a support plane of $M'$ at $o$, i.e., $\langle u,p\rangle\leq 0$ for all $p\in M'$.
Let $H_t$ be the plane orthogonal to $u$  which passes through the point $tu$, and $t_0$ be the infimum of $t\in\R$ such that $H_t\cap M=\emptyset$. Then $H_0:=H_{t_0}$ intersects $M$ at a point $p$, while $M$ lies in the side of $H_0$, say ${H_0}^+$, which is opposite to where $u$ points. If $p$ is in the interior of $M$, then $H_0=T_p M$ and we are done. Suppose then that $p\in\Gamma$. So $t_0\leq 0$. Then, since $\Gamma\subset {H_0}^+$, we have
$$
\langle p, u\rangle\leq 0.
$$
On the other hand, if $\nu$ denotes the inward conormal of $M$ along $\d M$, then we also have $\langle \nu(p),u\rangle=\frac{d}{d\tau}\big|_{\tau=0+}\langle p+\tau\nu(p),u\rangle \leq 0$, since $M\subset {H_0}^+$. But $\nu(p)=-p$ since by assumption $M$ lies inside $\S^2$ near $\Gamma$. Thus
$$
\langle p,u\rangle=\langle-\nu(p),u\rangle\geq 0.
$$
So we conclude that $\langle p, u\rangle=0$, which means that $H_0$ is orthogonal  to $\S^2$. Since $H_0$ is tangent to $\Gamma$ at $p$, it follows that $H_0=T_p M$ as desired.
\end{proof}

Let $N$ be a normal vector field for $M$, and $N'$ be the outward unit normal map of $M'$ ($N'$ is multivalued at $o$). Since $M'$ is convex,
$$
\int_{M'} K'=\sigma (N'(M')),
$$
where $\sigma$ is the area measure in $\S^2$. Further, since $\Gamma'$ lies in an open hemisphere, $N'(M')$ lies in an open hemisphere as well; because every vector in $N'(M')$ is the outward normal to a plane which passes through $o$ and supports $M'$. Thus, if $\pi\colon\S^2\to\RP^2$ is the standard projection,
$$
\sigma (N'(M'))=\ol\sigma(\pi(N'(M'))),
$$
where $\ol\sigma$ denotes the area measure in $\RP^2$.
By Lemma \ref{lem:mm'}, for every $u'\in N'(M')$, there exists $u\in N(M)$ such that $u'=\pm u$, or $\pi(u)=\pi(u')$. So $\pi(N'(M'))$ is covered by $\pi(N(M))$. In particular
$$
\ol\sigma(\pi(N'(M')))\leq\ol\sigma(\pi(N(M))).
$$
Finally note that, for any set $X\subset\S^2$, $\ol\sigma(\pi(X))\leq \sigma(X)$. Thus
$$
\ol\sigma(\pi(N(M))) \leq \sigma(N(M))\leq \int_{M} \det(dN)=\int_M K.
$$
The last four displayed expressions yield that $\int_{M'} K'\leq \int_M K$ as desired. So we conclude that $\Gamma$ is convex when $K\not\equiv 0$.

\subsection{($K\equiv 0$)} If $K$ vanishes identically, then by \eqref{eq:Gauss-Bonnet},
$$
\Length[\d M] =2\pi.
$$
Next we need the following lemma concerning the structure of an immersed disk with zero Gauss curvature in $\R^3$, which is due to Hartman and Nirenberg \cite[Thm. A]{hartman&nirenberg}; see also Massey \cite{Mas62} or do Carmo \cite[Sec. 5.8]{docarmo:csbook}.

\begin{lem}[\cite{hartman&nirenberg}]\label{lem3}
Let $D$ be a $\C^2$ disk of zero Gauss curvature immersed in $\R^3$. Then every point of $D$ either lies on a line segment in $D$ with end points on the boundary of $D$, or lies on a planar domain in $D$ whose boundary consists of line segments with end points on $\d D$ or arcs of $\d D$.
\end{lem}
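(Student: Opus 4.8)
The statement to prove is Lemma \ref{lem3}, the Hartman--Nirenberg structure theorem for flat immersed disks. Here is how I would approach it.

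\medskip

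The plan is to reduce to the classical local structure theory of developable surfaces and then run a global argument on the disk $D$. First I would recall the standard local fact: at a point $p\in D$ where the second fundamental form does not vanish (a \emph{non-flat} point, i.e., a point where one principal curvature is nonzero), there is a unique asymptotic (ruling) direction, and the integral curve of this direction through $p$ is a \emph{straight line segment} in $\R^3$ — this is because along an asymptotic curve of a flat surface the Gauss curvature being zero forces the curve to have zero geodesic curvature in $D$ \emph{and} zero normal curvature, hence zero curvature in $\R^3$. Moreover the tangent plane of $D$ is constant along each such ruling. The set $D_1$ of non-flat points is open, and on it the rulings foliate; on the complementary closed set $D_0$ of flat (planar) points the surface is locally a piece of a plane. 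So $D$ decomposes into the open foliated-by-lines part $D_1$ and the relatively closed flat part $D_0$, whose components are planar domains by a connectedness argument (a maximal connected flat region has a well-defined tangent plane since the tangent plane is locally constant there, hence lies in a single plane).

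\medskip

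Next I would analyze how a ruling segment $\ell$ through a point $q\in D_1$ can terminate. Parametrize $\ell$ as a maximal open straight segment in $D$ through $q$ in the ruling direction. Each endpoint either (a) hits $\partial D$, or (b) stays in the interior of $D$. In case (b), by continuity of the second fundamental form, the endpoint is either still a non-flat point — but then the ruling could be extended a little further, using that near a non-flat point the rulings vary continuously and form an honest foliation by segments, contradicting maximality unless the ruling reaches $\partial D$ — or it is a flat point, in which case $\ell$ abuts the boundary of a flat component. Thus every ruling segment through a point of $D_1$ has each endpoint either on $\partial D$ or on the boundary of some planar (flat) component of $D_0$. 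This already gives the dichotomy in the statement for points of $D_1$: such a point lies on a segment in $D$ whose endpoints are on $\partial D$ (after, if necessary, continuing the segment through the planar components it runs into — but actually I would just state it as the segment having endpoints on $\partial D$, allowing the segment to pass through the closures of flat regions where the tangent plane is the same, which is the Hartman--Nirenberg conclusion). For a point $p\in D_0$, it lies in a planar component $\Omega$; the topological boundary of $\Omega$ in $D$ consists of limit points of non-flat points, and along that boundary the incoming rulings of $D_1$ are, by the tangent-plane continuity, lines lying in the plane of $\Omega$. Hence $\partial\Omega$ (relative boundary) is made up of line segments (these ruling limits) together with arcs of $\partial D$ where $\Omega$ touches the boundary of the disk. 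That is exactly the second alternative.

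\medskip

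The main obstacle, and the point where I would be most careful, is controlling the behavior of the rulings near the boundary between $D_1$ and $D_0$ and ruling out pathological accumulation — e.g., ensuring a ruling segment cannot spiral or fail to be extendable in a controlled way, and ensuring that a maximal segment genuinely reaches $\partial D$ rather than limiting onto something degenerate in the interior. The standard device here is that on $D_1$ the distribution of asymptotic directions is $\C^1$ (since $D$ is $\C^2$ and the direction is the null direction of a non-degenerate-rank quadratic form depending continuously on the point), so locally we have a genuine foliation by line segments; combined with the fact that two distinct rulings cannot cross in $D_1$ (they would force a common tangent plane and hence, by flatness, coincide), the leaves through interior points can only terminate on $\partial D$ or on $\overline{D_0}$. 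For completeness I would cite \cite{hartman&nirenberg}, and also \cite{Mas62} and \cite[Sec. 5.8]{docarmo:csbook} as indicated, where these continuity-of-rulings and non-crossing arguments are carried out in detail; the proof here would be a summary organized exactly around the two alternatives in the statement.
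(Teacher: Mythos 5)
The paper offers no proof of this lemma: it is quoted from Hartman--Nirenberg, with Massey and do~Carmo cited as alternative sources. Your sketch follows the standard route from those references (decompose $D$ into the open set $D_1$ of non-planar points, foliated by asymptotic rulings which are straight segments with constant tangent plane, and the closed set $D_0$ of planar points, whose components are planar domains), and most of it is sound. But it has a genuine gap at the single most delicate step of the classical argument.

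The gap is in your treatment of a maximal ruling segment $\ell$ through a non-planar point whose endpoint is an interior flat point. You propose to ``continue the segment through the closures of flat regions where the tangent plane is the same.'' This is not justified: a flat component is a two-dimensional planar domain, and there is no canonical straight continuation of $\ell$ inside it that is guaranteed to stay in $D$ and to emerge on the far side, so the first alternative of the dichotomy (a segment with \emph{both} endpoints on $\partial D$) is not established for such points. The classical proof closes exactly this hole by showing the case cannot occur: along a ruling parametrized by arc length $v$, the Codazzi equations force the reciprocal $1/k$ of the nonzero principal curvature to be an affine function of $v$, so $k$ cannot tend to $0$ as $v$ approaches a finite value; hence the maximal ruling through a non-planar interior point consists entirely of non-planar points and reaches $\partial D$ at both ends. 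Some such argument is indispensable. A minor further point: for a $\C^2$ surface the asymptotic direction field on $D_1$ is only continuous, not $\C^1$ as you assert; continuity suffices here precisely because the leaves are straight segments, but the $\C^1$ claim should be dropped.
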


By Lemma \ref{lem3} and the free boundary condition, all tangent planes of $M$ must go through $o$. It follows that $M$ itself must go through $o$, otherwise the tangent plane of a point on $M$ with shortest distance to $o$ can not contain $o$. Consequently, with the help of Lemma \ref{lem3}, $o$ lies in the convex hull of $\d M$. Then Crofton's formula implies that $\Length[\d M]\geq 2\pi$ with equality if and only if $\d M$ is a great circle. The equality indeed holds as we pointed out above. So $\d M$ is a great circle. In particular $\Gamma=\d M$ is convex.

\section{Extending $M$ to a Closed Hypersurface $\ol M$}\label{sec:extension}
Having established the convexity of each boundary component $\Gamma$ of $M$, we will now extend $M$ to a closed $\C^1$ hypersurface $\ol M$  which is $\C^2$ except along some closed set $A$ of measure zero. Furthermore we will show that the image of the Gauss map  of $\ol M$ restricted to $A$ has measure zero as well. This involves gluing along each $\Gamma$ a suitable convex disk, which we construct with the aid of the following three lemmas. A \emph{convex cone}  is a closed convex proper subset $C$ of $\R^{n+1}$ such that for every $x\in C$, $\lambda x\in C$ for $\lambda\geq 0$. In particular $o\in C$. We say that $M\subset\R^{n+1}$ is a \emph{convex conical hypersurface} if it bounds a convex cone which has interior points.

\begin{lem}\label{lem:cone1}
Let $M\subset\R^{n+1}$ be a convex conical hypersurface which is $\C^{1}$  in the complement of $o$. Suppose that $M$ is not strictly convex at $o$. Then $M$ is a hyperplane.
\end{lem}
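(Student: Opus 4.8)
The plan is to show that the convex cone $C$ bounded by $M$ is a closed half-space, which is equivalent to $M$ being a hyperplane.

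First I would reformulate the failure of strict convexity at $o$. Since $C$ is a convex cone with interior points, it is \emph{pointed} (contains no line) if and only if its polar cone $C^{\circ}=\{u:\langle u,x\rangle\le 0\text{ for all }x\in C\}$ has interior points. So if $C$ contained no line, we could choose $u$ in the interior of $C^{\circ}$; then $\langle u,x\rangle<0$ for every $x\in C\setminus\{o\}$, and hence the hyperplane $u^{\perp}$, which passes through $o\in M$, would meet $M=\d C$ only at $o$. Consequently $M$ would be locally strictly convex at $o$, contrary to hypothesis. Therefore $C$ contains a line $\ell=\R v$ through $o$.

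Next I would exploit this line. Since $\R v\subset C$, for $x\in C$ and $s\in\R$ we have $x+sv=\frac{1}{2}(2x)+\frac{1}{2}(2sv)\in C$ by convexity, so $C+\R v=C$, and hence $C=\R v+C'$ where $C':=C\cap v^{\perp}$ is a proper closed convex cone of $v^{\perp}\cong\R^{n}$ with interior points and $o\in\d C'$ (as $C'\neq v^{\perp}$). Correspondingly $M=M'+\R v$ with $M':=\d C'$ a convex conical hypersurface of $v^{\perp}$. Because $C$ is invariant under translation by $\R v$, every supporting hyperplane of $C$ at a point $tv$ contains the direction $v$, and passing to $v^{\perp}$ shows that $\d C$ is $\C^{1}$ at $tv$ exactly when $M'$ is $\C^{1}$ at $o$ (using that the boundary of a convex set with interior points is $\C^{1}$ at a point precisely when the supporting hyperplane there is unique). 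But for any $t\neq 0$ the point $tv$ lies in $M\setminus\{o\}$, where $M$ is $\C^{1}$ by hypothesis; hence $M'$ is $\C^{1}$ at $o$. Finally, a cone whose boundary is $\C^{1}$ at its apex is flat: writing $M'$ near $o$ as a $\C^{1}$ graph $y_{n}=\phi(y')$ over its tangent plane at $o$, with $\phi(0)=0$ and $D\phi(0)=0$, the cone property makes $\phi$ positively homogeneous of degree one near $0$, and a degree-one homogeneous function that is differentiable at the origin with vanishing gradient there vanishes identically. Thus $M'$ is a hyperplane of $v^{\perp}$ through $o$, so $C'$ is a closed half-space of $v^{\perp}$, whence $C=\R v+C'$ is a closed half-space of $\R^{n+1}$; that is, $M$ is a hyperplane.

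I expect the main obstacle to be the first step: converting the \emph{local} (and possibly at a singular point) failure of strict convexity into the \emph{global} statement that $C$ contains a line. This seems to require passing through the cone--polar duality rather than arguing directly, together with the observation that, for a convex hypersurface $M=\d C$, the only hyperplanes through $o$ that can witness local strict convexity are in effect the supporting hyperplanes. The cylindrical reduction $C=\R v+C'$ and the transfer of $\C^{1}$-regularity from $tv$ to $o$ should then be routine, as should the concluding homogeneity argument, though the latter does need to be written out carefully.
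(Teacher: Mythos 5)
Your proof is correct, and it diverges from the paper's in an instructive way. Both arguments hinge on the same two facts: (i) failure of strict convexity at $o$ forces the cone $C$ to contain a line $L$ through $o$ (the paper simply asserts this; your polar-cone argument for the contrapositive is exactly the right justification, so you have in fact filled in a step the paper leaves implicit), and (ii) the $\C^1$ hypothesis at the points of $L\setminus\{o\}$ pins down the support hyperplane there uniquely, and every support hyperplane of $C$ at $o$ must contain $L$ and hence support $C$ at those points too. Where you part ways is the endgame: the paper concludes that the support hyperplane of $C$ at $o$ is unique, so the dual cone $C^\circ$ is a single ray, whence $C=C^{\circ\circ}$ is a half-space; you instead quotient out the line, writing $C=C'+\R v$, transfer the uniqueness of the support hyperplane to the apex of the lower-dimensional cone $C'$, and finish with a graph-plus-homogeneity argument. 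The duality route is shorter; yours is more hands-on and avoids invoking $C^{\circ\circ}=C$. Two small imprecisions in your write-up, neither fatal: the correct pointwise statement is that the boundary of a convex body is \emph{differentiable} at a point (not $\C^1$ near it) iff the support hyperplane there is unique, and accordingly your graph $\phi$ over $T_oM'$ is a priori only a convex function differentiable at the origin with vanishing gradient--- not a $\C^1$ graph. That is exactly what your homogeneity argument needs ($\phi(y')=\phi(ty')/t\to D\phi(0)[y']=0$ as $t\to 0^+$), so the proof goes through, but you should state the regularity you actually have; you should also note in passing that $L\subset\d C$ (if some $tv$ were interior to $C$ then $o$ would be too), so that the points $tv$, $t\neq 0$, really lie in $M\setminus\{o\}$ where the $\C^1$ hypothesis applies.
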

\begin{proof}
Since $M$ is not strictly convex at $o$, it must contain a line $L$ passing through $o$.
Let $H$ be a support hyperplane of $M$ at $o$. Then $H$ is tangent to $M\cap\S^n$ at the points $L\cap\S^n$. Consequently $H$ is the unique support hyperplane of $M$ at $o$. So if $C$ is the cone bounded by $M$, then the ``dual cone" $C^\circ$ of $C$, generated by all outward normals to support hyperplanes  of $C$ at $o$, consists of a single ray. Consequently the dual of the dual cone, $C^{\circ\circ}$ is a half-space. But $C^{\circ\circ}=C$, e.g., see \cite[p. 35]{schneider2014}. Thus $C$ is a half-space, which yields that $M$ is a hyperplane.
\end{proof}

\begin{lem}\label{lem:cone2}
Let $M\subset\R^{n+1}$ be a convex conical hypersurface which is $\C^{k\geq 2}$  in the complement of  $o$. Suppose that $M$ is strictly convex at $o$. Then for any ball $B$ centered at $o$ there exists a $\C^1$ convex hypersurface $\tilde M$ which coincides with $M$ outside $B$ and is $\C^k$ except along a pair of closed $\C^{k-1}$ hypersurfaces $\Gamma_i$, $i=1$, $2$. Furthermore, the images of $\Gamma_i$ under the Gauss map of $\tilde M$ have measure zero.
\end{lem}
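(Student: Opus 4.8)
\smallskip
The plan is to cut off the tip of the cone by a hyperplane --- which leaves a flat disk joined to the rest of the cone along a single convex $(n-1)$-dimensional edge $\gamma$ --- and then to round that edge; the two hypersurfaces $\Gamma_1,\Gamma_2$ will be the two rims of the rounding.

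\smallskip
\emph{Cutting off the tip.} Let $C\subset\R^{n+1}$ be the convex cone bounded by $M$. Since $M$ is strictly convex at $o$, some hyperplane through $o$ meets a neighborhood of $o$ in $M$ only at $o$; since $C$ is convex, such a hyperplane must support $C$ at $o$, and a supporting hyperplane of a cone touching it only at the apex meets all of the cone only at $o$ (in particular $C$ is pointed). After an isometry, assume this hyperplane is $\{x_{n+1}=0\}$ and $C\setminus\{o\}\subseteq\{x_{n+1}>0\}$. For $t>0$ the slice $K_t:=C\cap\{x_{n+1}=t\}$ is a compact convex body: it can contain no ray, because such a ray would be horizontal and would give a nonzero horizontal point of $C$. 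Its boundary $\d K_t=M\cap\{x_{n+1}=t\}$ is a closed $\C^{k}$ hypersurface of the hyperplane $\{x_{n+1}=t\}$, since at every $p\in M$ with $x_{n+1}(p)=t$ the plane $T_pM$ contains the generator $\overline{op}$ and is therefore transverse to $\{x_{n+1}=t\}$. Now fix $\epsilon>0$ so small that $C\cap\{x_{n+1}\le 2\epsilon\}\subseteq B$ --- possible because $C$ is a cone with apex $o$ --- and set $C_\epsilon:=C\cap\{x_{n+1}\ge\epsilon\}$. Then $C_\epsilon$ is convex, its boundary is the flat disk $K_\epsilon$ together with the conical piece $M\cap\{x_{n+1}\ge\epsilon\}$, and these two pieces meet along $\gamma:=\d K_\epsilon$ in a genuine convex dihedral edge whose opening angle varies $\C^{k}$ along $\gamma$.

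\smallskip
\emph{Rounding the edge.} One then rounds $\gamma$, modifying $\d C_\epsilon$ only inside a thin tubular neighborhood of it; concretely, one may take $\tilde C:=(C_\epsilon\ominus B_\rho)\oplus B_\rho$, an erosion of $C_\epsilon$ by a small ball of radius $\rho$ followed by a dilation, which is automatically convex. Choosing $\rho$ small enough (below the minimal curvature radius of $M$ along $\gamma$, and less than $\epsilon/2$) and using that $K_\epsilon$ is flat while the conical face is ruled, this operation leaves $\d C_\epsilon$ unchanged outside a $\rho$-tube about $\gamma$ and replaces the convex edge there by a smooth fillet. Thus $\tilde M:=\d\tilde C$ coincides with $M$ outside $B$ (the modified region lies in $\{x_{n+1}\le\epsilon+2\rho\}\subseteq\{x_{n+1}\le 2\epsilon\}\subseteq B$), is a flat disk in $\{x_{n+1}=\epsilon\}$ near its bottom, and interpolates between them along the fillet, which is tangent to the flat disk along a hypersurface $\Gamma_1$ and tangent to $M$ along a hypersurface $\Gamma_2$; hence $\tilde M$ is $\C^1$, with $\Gamma_1\cong\Gamma_2\cong\gamma\cong\S^{n-1}$. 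Away from $\Gamma_1\cup\Gamma_2$, $\tilde M$ is $\C^\infty$ on the disk, $\C^{k}$ on $M$, and $\C^{k}$ on the fillet (built as a smooth --- in fact $\C^\infty$, in our application --- rolling of a ball), while along $\Gamma_1$ and $\Gamma_2$ a principal curvature jumps from $0$ (on the flat disk, and along the ruling direction of $M$) to $1/\rho$ (on the fillet), so $\tilde M$ is only $\C^1$ there. Thus $\tilde M$ is $\C^k$ off the two $\C^{k-1}$ hypersurfaces $\Gamma_1,\Gamma_2$.

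\smallskip
\emph{Convexity, Gauss map, main obstacle.} Convexity of $\tilde M$ is automatic, $\tilde C$ being an erosion-then-dilation of a convex set (alternatively, $\tilde M$ is locally convex and $\inte\tilde C$ is connected and weakly supported locally at each boundary point, so $\tilde C$ is convex by Tietze's theorem, as in the proof of Lemma \ref{lem:C1tan}). For the last assertion, the Gauss map $N$ of $\tilde M$ is continuous, it is constant along $\Gamma_1$ (the normal of the flat disk), and along $\Gamma_2$ it is the restriction to $\Gamma_2$ of the $\C^{k-1}$ Gauss map of the cone $M$; as $k\ge 2$, each $N|_{\Gamma_i}$ is at least $\C^1$, so $N(\Gamma_i)$ is a $\C^1$ image of an $(n-1)$-manifold inside the $n$-manifold $\S^n$, hence of measure zero. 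The delicate point is the edge-rounding: one must verify that the rolling-ball modification genuinely leaves the curved conical face unchanged away from $\gamma$ (using that $M$ is ruled and $\rho$ is below its curvature radius), that the resulting fillet meets the two faces tangentially with the stated regularity, and that the two rims $\Gamma_1,\Gamma_2$ are exactly the locus where $\C^2$ fails. The remaining points --- transversality of the slices, the choice of $\epsilon$, and the Gauss-image bound --- are routine.
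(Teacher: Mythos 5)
Your construction is exactly the paper's: the opening $(C_\epsilon\ominus B_\rho)\oplus B_\rho$ is precisely the union of all $\rho$-balls contained in the truncated cone, which is how the paper defines $\tilde K$, and your identification of $\Gamma_1$, $\Gamma_2$ as the two rims of the resulting fillet, together with the regularity and Gauss-map analysis, matches the paper's proof step for step. The ``delicate point'' you flag --- that the rolling ball leaves the conical face unchanged away from $\gamma$ --- is settled in the paper by applying the tubular neighborhood theorem on a compact piece of the cone and using that the normal is constant along each ruling, essentially the argument you sketch.
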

\begin{proof}
After a homothety, we may assume that $B$ is the unit ball $B^{n+1}$. Further let us assume, after a rotation, that $(0,\dots,0,-1)$ lies in the interior of the convex region bounded by $M$, and $M$ intersects the $x_{n+1}=0$ hyperplane only at $o$. Let $H$ be the plane $x_{n+1}=c_0<0$ such that the component of $M\setminus H$ which contains the origin lies  in $B^{n+1}$, and $K$ be the convex set which lies below $H$ and inside $M$. Next, let $\tilde K$ be the union of all balls of radius $\delta$ contained inside $K$; see Figure \ref{fig:triangle}. We claim that if $\delta$ is sufficiently small, then $\tilde M:=\d \tilde K$ is the desired surface.

\begin{figure}[h]
\centering
\begin{overpic}[height=1.75in]{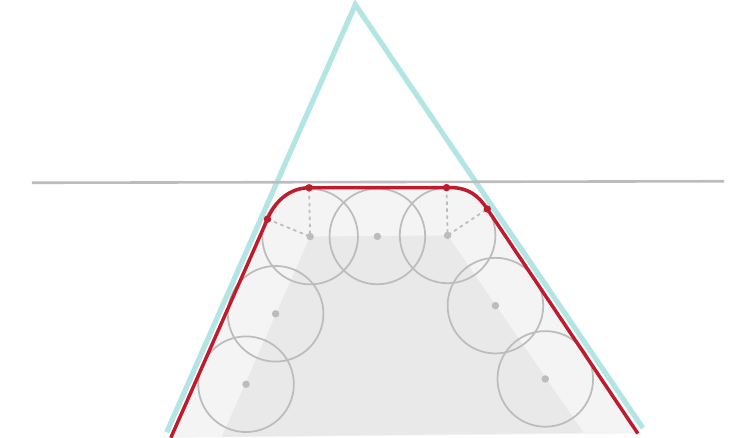}
\put(38,47){\Small $M$}
\put(40,35){\Small $\tilde M$}
\put(0,33){\Small $H$}
\put(49,28){\Small $\tilde K$}
\put(49,11){\Small $K_\delta$}
\end{overpic}
\caption{}\label{fig:triangle}
\end{figure}

First we check that $\tilde K$ is convex. Let $p_i\in \tilde K$, $i=1$, $2$. Then there are balls $B_i\subset K$ of radius $\delta$ which contain $p_i$. The convex hull, $\conv(B_1\cup B_2)$, of $B_1$ and $B_2$ lies in $K$,  since $K$ is convex. Note that $\conv(B_1\cup B_2)$ consists of all balls of radius $\delta$ centered at the segment connecting  centers of $B_i$. So $\conv(B_1\cup B_2)\subset\tilde K$, which completes the argument since $p_1p_2\subset \conv(B_1\cup B_2)$.

Second we check that $\d\tilde K$ is $\C^1$. Through every point of $\d \tilde K$  there passes a ball contained in $\tilde K$. Consequently the support hyperplane through every point of $\d \tilde K$ is unique. It follows that $\d \tilde K$ is $\C^1$  \cite[Thm. 1.5.15]{schneider2014}.

Third we check that $\tilde M$ coincides with $M$ outside $B^{n+1}$. To see this let  $M'$ denote the portion of $M$ outside the interior of a ball of radius $|c_0|/2$ centered at $o$, and $N$ be the inward normal of $M'$. Since $M'$ is $\C^2$, by the tubular neighborhood theorem, we may choose $\delta$ so small that the mapping $p\mapsto p+\delta N(p)$ is one-to-one on any given compact subset of $M'$, such as its boundary $\d M'$. Then, since $N$ is constant along each ray of $M'$, it follows that $p\mapsto p+\delta N(p)$ is one to one on $M'$. Hence through each point of $M'$ there passes a ball of radius $\delta$ which lies inside $M$. Further we can make sure that $\delta$ is smaller than the distance between $H$ and $M\cap \S^n$. Then all $\delta$-balls which intersect $M$ outside $B^{n+1}$ are inside $M$ and below $H$, and so they are contained in $K$. So $\tilde M$ coincides with $M$ outside $B^{n+1}$.

Fourth, we check the regularity of $\tilde M$. To this end note that the set $K_\delta\subset K$ which consists of the centers of all $\delta$-balls inside $K$ is itself a convex set. Choosing $\delta $ sufficiently small, we may assume that there exists a $\delta$-ball inside $K$ which is tangent to $H$ and disjoint from $M$. Then $X:=\d \tilde K \cap H$ is a convex body in $H$, comprised of the intersections of $\delta$-balls in $K$ with $H$. Accordingly, if we let $H_\delta$ be the hyperplane parallel to and below $H$ at the distance $\delta$ from $H$, then $X_\delta:=\d K_\delta\cap H_\delta$ is a convex body in $H_\delta$. So $\Gamma:=\d X_\delta$ is a hypersurface in $\d K_\delta$. Note that $\Gamma$ is the intersection of $H_\delta$ with the parallel hypersurface $M'_\delta$ of $M'$. Hence $\Gamma$ is $\C^k$, since $M'$ is $\C^k$, which  implies that $M'_\delta$ is $\C^k$. Indeed, at small distances, parallel hypersurfaces of $\C^k$ hypersurfaces are $\C^k$ \cite[Lem. 3.1.8]{ghomi:stconvex}.

Now $\Gamma$ determines two different hypersurfaces in $\tilde M$. One hypersurface, say $\Gamma_1$ is obtained by moving $\Gamma$ upward by a distance of $\delta$ along the normals to $H_\delta$. The other hypersurface, say $\Gamma_2$,  is obtained  by expanding $\Gamma$ along the outward normals to $M'_\delta$. Then $\Gamma_1$ and $\Gamma_2$ are both $\C^{k-1}$ hypersurfaces of $\tilde M$. Further these two hypersurfaces determine three regions in $\tilde M$: one, bounded by $\Gamma_1$, is just a flat disk in $X$; another, bounded by $\Gamma_2$, lies in $M$, while the third is an annular region, say $A$, bounded by $\Gamma_1$ and $\Gamma_2$. The first region is $\C^\infty$, since it lies in $H$, while the second region is $\C^k$ since it lies in $M$. It remains then to establish the regularity of the third region $A$.

 By construction, $A$ lies on the boundary of the set of all $\delta$-balls centered at $\Gamma$. Equivalently, $A$ lies on a tubular hypersurface of $\Gamma$ at the distance $\delta$. Since $\Gamma$ is $\C^k$, it follows that its tubular hypersurface is $\C^k$ as well, since the distance function of a $\C^k$ submanifold is $\C^k$ \cite[Sec. 2.4]{ghomi:stconvex}. Hence $A$ is $\C^k$ as desired.

 Finally we check that $\tilde N(\Gamma_i)$ has measure zero, where $\tilde N$ is the Gauss map of $\tilde M$. First note that $\tilde N(\Gamma_1)$ is a singleton, since $\tilde M$ is tangent to the hyperplane $H$ along $\Gamma_1$. Furthermore, $\tilde M$ is tangent to $M$ along $\Gamma_2$. Thus $\tilde N(\Gamma_2)=N(\Gamma_2)\subset N(M\setminus\{o\})$ where $N$ is the Gauss map of $M\setminus\{o\}$. But $N(M\setminus\{o\})$ has measure zero since $M$ is a convex cone. Indeed $N(M\setminus\{o\})=\partial N(M)$, and $N(M)$ is a convex subset of $\S^{n}$. So $\partial N(M)$ has measure zero, which in turn yields that $\tilde N(\Gamma_2)$ has measure zero, and completes the proof.
\end{proof}

\begin{lem}\label{lem:inversion}
Let $M\subset B^{n+1}$ be a compact convex hypersurface with free boundary on $\S^n$, and suppose that $o\not\in M$. Then the inversion of $M$ through $\S^n$ is again a compact convex hypersurface with free boundary on $\S^n$.
\end{lem}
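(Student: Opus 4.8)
The plan is to use the inversion $\iota\colon\R^{n+1}\setminus\{o\}\to\R^{n+1}\setminus\{o\}$, $\iota(x)=x/|x|^2$, through $\S^n$, and set $\hat M:=\iota(M)$, which is well defined because $o\notin M$. The easy half is that $\hat M$ is again a compact $\C^\infty$ hypersurface with free boundary on $\S^n$. Indeed $\iota$ is a conformal involution of $\R^{n+1}\setminus\{o\}$ which fixes $\S^n$ pointwise and interchanges $\inte(B^{n+1})\setminus\{o\}$ with the exterior of $B^{n+1}$; hence $\hat M$ is a compact embedded $\C^\infty$ hypersurface lying in $\R^{n+1}\setminus\inte(B^{n+1})$ with $\partial\hat M=\iota(\partial M)=\partial M=:\Gamma\subset\S^n$, and since $\iota$ is conformal and fixes $\S^n$ it preserves the right angle between $M$ and $\S^n$ along $\Gamma$, so $\hat M$ meets $\S^n$ orthogonally along $\Gamma$.

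The substance is the convexity of $\hat M$. Write $K:=\conv(M)$ and let $N$ be the unit normal of $M$ pointing into $K$ (well defined since $M$ is convex and embedded). The crucial geometric assertion is that
$$\langle q,N(q)\rangle\ge 0\qquad\text{for all }q\in M,$$
equivalently, that every supporting hyperplane $T_qM$ of $K$ weakly separates $o$ from $K$, i.e. $o$ is never a strictly interior point of the supporting halfspace $H_q^{+}$ of $T_qM$ containing $K$. Granting this, the rest is a computation: a halfspace $H_q^{+}$ with $o\notin\inte(H_q^{+})$ is carried by $\iota$ either onto a closed ball $\hat K_q$ with $o\in\partial\hat K_q$ (when $o\notin H_q^{+}$), or onto itself (when $o\in T_qM$); in either case $\hat M=\iota(M)\subset\iota(H_q^{+})=\hat K_q$, while $\hat q:=\iota(q)\in\iota(T_qM)=\partial\hat K_q$. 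Setting $\hat K:=\bigcap_{q\in M}\hat K_q$, a convex set, we get $\hat M\subset\hat K$, and since each $\hat q\in\partial\hat K_q$ with $\hat K\subset\hat K_q$, no $\hat q$ lies in $\inte(\hat K)$; thus $\hat M\subset\partial\hat K$. Finally $\hat K$ is a convex body: it is bounded because some $q$ has $\langle q,N(q)\rangle>0$ (otherwise every tangent plane of $M$ would pass through $o$, forcing $M$ to be radially ruled with all boundary on $\S^n$, hence to contain $o$); and it is full–dimensional because $\hat M$ is not flat — a flat $M$ (resp. flat $\hat M$) with free boundary on $\S^n$ would be an equatorial disk and hence contain $o$, impossible for $\hat M=\iota(M)$ since $\iota$ never takes the value $o$. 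Hence $\hat M$ is convex.

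Everything therefore reduces to the displayed inequality. On $\Gamma$ it is an equality: by the free boundary condition the conormal of $\Gamma$ in $M$ is $\pm q$, so as in Lemma \ref{lem1} the normal $N(q)$ is tangent to $\S^n$, whence $N(q)\perp q$; in particular $T_qM\ni o$ for every $q\in\Gamma$. The main obstacle is the inequality at interior points, which I would attack by contradiction: if $\psi:=\langle\,\cdot\,,N(\cdot)\rangle$ were negative somewhere on $M$ it would attain a negative minimum at an interior point $q_*$; there $\nabla^M\psi(q_*)=0$, and a short computation shows this forces the component of $q_*$ tangent to $M$ to lie in the kernel of the shape operator of $M$ at $q_*$. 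One must then rule out such a configuration using convexity of $M$ together with the free–boundary fact just noted that the tangent planes along $\Gamma$ all pass through $o$; geometrically the point is that every ray from $o$ that meets $K=\conv(M)$ does so in a chord whose endpoint on $M$ is the one nearer $o$ (so $T_qM$ separates $o$ from $K$), the nearest point of $M$ to $o$ — which is interior since $M\subset B^{n+1}$ is not contained in $\S^n$ — serving as the base case and a connectedness argument along $M$ propagating it. Carrying this last step out carefully is the heart of the proof.
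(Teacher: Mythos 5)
Your reduction is exactly the paper's: conformality of the inversion handles the free boundary condition, and convexity of the image follows once one knows that for every $q\in M$ the supporting halfspace $H_q^+$ of $T_qM$ containing $M$ does not contain $o$ in its interior (then $H_q^+$ inverts to itself or to a ball through $o$, and the image surface admits a support hyperplane at every point). Your bookkeeping with $\hat K=\bigcap_q \hat K_q$ is a slightly more explicit version of the paper's conclusion, and your observation that equality $\langle q,N(q)\rangle=0$ holds on $\d M$ (tangent planes along $\d M$ pass through $o$) is also used by the paper. So the frame is right.

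The gap is the one you yourself flag: the inequality $\langle q,N(q)\rangle\ge 0$ at interior points is never proved. The critical-point argument for $\psi=\langle\cdot,N(\cdot)\rangle$ only tells you something about the configuration at a putative negative minimum, and the subsequent ``connectedness argument along $M$ propagating'' the inequality from the nearest point to $o$ is not an argument --- the set where $\psi\ge 0$ is closed but there is no reason offered why it is open, and the claim that every ray from $o$ meets $\conv(M)$ in a chord whose nearer endpoint lies on $M$ is itself unproved (and is essentially equivalent to what you are trying to show). The paper closes this step with a short global argument that you should be able to reconstruct: since every tangent plane of $M$ along $\d M$ passes through $o$ and supports the convex surface $M$, the surface $M$ is contained in the solid cone $C$ with apex $o$ over $\d M$. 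If some support plane $H$ of $M$ at $p$ had $o\in\inte(H^+)$, then the convex set $H^+$ would contain $\{o\}\cup\d M$ and hence $C\supset M$; but $H$ meets $C$ at $p$, so $H$ supports $C$, and a support plane of $C$ at a point of a lateral ray must contain that ray and hence $o$ --- contradicting $o\in\inte(H^+)$. Without this (or some substitute), your proof is incomplete at its central step.
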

\begin{proof}
For every point $x\in\R^{n+1}$, let $x':=x/\|x\|^2$ denote its inversion through $\S^n$.
Since $M$ is convex, through each point $p\in M$ there passes a support hyperplane $H$. Let $H^+$ be the side of $H$ where $M$ lies and $H^-$ be the opposite side.
We claim that $o\in H^-$ for all $p\in M$. Indeed, suppose towards a contradiction that $o\in \inte(H^+)$. Then, since $\d M$ also lies in $H^+$, $H^+$ contains the cone $C$ formed by connecting $o$ to points of $\d M$. But $C$ contains $M$, since $M$ is convex and $C$ is tangent to $M$ along $\partial M$, which ensures that every tangent hyperplane of $C$ is a tangent hyperplane of $M$, and therefore is a support hyperplane of $M$. Thus $H$ cannot be disjoint from $C$, and therefore is a support plane of $C$. In particular $H$ contains $o$, which is the desired contradiction. So we conclude that $o\in H^-$. Now for every $p\in M$,  either (i) $o\in H$, or (ii) $o\in\inte(H^-)$. In case (i), $H'=H$ and $(H^+)'=H^+$. Consequently $H$ is a support hyperplane of $M'$ at $p'$. In case (ii), $H'$ is a sphere passing through $o$, and $(H^+)'$ is the ball bounded by  $H'$. Thus $T_{p'} H'$ is a supporting hyperplane of $M'$ at $p'$. So $M'$ is convex since through each point of it there passes a support hyperplane. Further, $M'$ is orthogonal to $\S^n$ along $\d M'$ since inversion is a conformal map.
\end{proof}

Now for each component $\Gamma$ of $\d M$ we construct a convex disk $D_\Gamma$ as follows.
Let $C_\Gamma$ be the conical disk generated by connecting all points of $\Gamma$ to $o$. By Lemma \ref{lem:cone1}, $C_\Gamma$ is either a flat disk or else is strictly convex at $o$. In the latter case, let $\tilde C_\Gamma$ be the smoothing of $C_\Gamma$ near $o$ given by Lemma \ref{lem:cone2}, and $(\tilde C_\Gamma)'$ be the inversion of $\tilde C_\Gamma$ given by Lemma \ref{lem:inversion}.

First suppose that $M$ lies outside $\S^n$ near $\Gamma$.   If $\Gamma$ is not a great sphere,  i.e., the intersection of $\S^n$ with a hyperplane through $o$, set $D_\Gamma:=\tilde C_\Gamma$; otherwise, set $D_\Gamma:=C_\Gamma$. Next suppose that $M$ lies inside $\S^n$ near $\Gamma$. If $\Gamma$ is not a great sphere, set $D_\Gamma:=(\tilde C_\Gamma)'$; otherwise, we proceed as follows. Let $H$ be the hyperplane of $\Gamma$, $S$ be a sphere of radius $2$ centered at $o$, and $A$ be the annular region in $H$ bounded by $\Gamma$ and  $\Gamma':=S\cap H$. Take one of the hemispheres of $S$ bounded by $\Gamma'$, glue it to $A$, and smoothen the joint to obtain the desired  disk $D_\Gamma$ (the smoothing here is trivial, since we have a surface of revolution).

Now gluing each of the disks $D_\Gamma$ along the corresponding component $\Gamma$ of $\d M$ yields the desired closed surface $\ol M$. In particular  $\ol M$ is  $\C^1$, because it has  flat tangent cones at each point which vary continuously along $\ol M$, see \cite[Lem. 3.1]{ghomi&howard:tancones}.  Furthermore, $\ol M$ is $\C^2$ everywhere except possibly along each $\Gamma$, and  a pair of closed hypersurfaces $\Gamma_1$, $\Gamma_2$ in the interior of $D_\Gamma$ when $\Gamma$ is not a great sphere.
By Lemma \ref{lem:cone2}, $\ol N(\Gamma_i)$ has measure zero, where $\ol N$ is the Gauss map of $\ol M$. Finally note that $\ol N(\Gamma)$ has measure zero as well, since $\ol M$ is tangent to a convex cone along $\Gamma$, and as we argued at the end of the proof of Lemma \ref{lem:cone2}, the Gauss image of the lateral portion of a convex cone has measure zero.

\section{Convexity of $\ol M$}\label{sec:dCL}

To finish the proof  of Theorem \ref{thm:main} we need one last observation, which is essentially due to  do Carmo and Lima \cite{dCL69}. We mainly check that the stated $\C^\infty$ regularity in their arguments may be relaxed by a somewhat finer use of Morse theory. This yields the following generalization of Sacksteder's theorem in the compact case (which originally had required $\C^{n+1}$ regularity).

\begin{prop}\label{prop:C1}
Let $M$ be an immersed closed $\C^1$ hypersurface in $\R^{n+1}$. Suppose that $M$ is $\C^2$ and nonnegatively curved on $M\setminus A$, where $A$ is a closed subset of measure zero. Moreover, suppose that the image of $A$ under the Gauss map of $M$ has measure zero. Then $M$ is convex.
\end{prop}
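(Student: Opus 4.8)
The plan is to adapt the classical argument of do Carmo and Lima \cite{dCL69}, which establishes convexity of a closed hypersurface by combining two ingredients: a \emph{local} statement (that $M$ is locally convex, i.e.\ lies on one side of each tangent hyperplane) and a \emph{global} statement (that a closed, connected, locally convex hypersurface immersed in $\R^{n+1}$ is embedded as the boundary of a convex body). For the global step one quotes van Heijenoort's theorem \cite{vH:convex}, which applies verbatim here since it requires only local convexity plus one point of local strict convexity; such a point exists because $M$ is compact, so a point farthest from the origin has a supporting sphere and hence is locally strictly convex. The real content is therefore the local convexity of $M$, and this is where the degeneracy set $A$ and the relaxed $\C^1$ regularity must be handled.

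First I would fix a continuous global unit normal $N$ on $M$ (after passing to the orientation double cover if needed, exactly as arranged in Section \ref{subsec:terminology} — but note $M$ here need not be orientable a priori, so one should either assume this reduction or argue locally). On $M\setminus A$ the hypersurface is $\C^2$ with positive semidefinite second fundamental form with respect to one of the two normals at each point; the first task is to show this choice is globally consistent, i.e.\ that the second fundamental form is $\succeq 0$ with respect to $N$ on all of $M\setminus A$. This is the analogue of Sacksteder's theorem that the flat set does not separate: one shows that the open set where $\II_N \succ 0$ somewhere (equivalently the complement of the flat locus) together with $A$ cannot be disconnected by the flat points. Here, however, I do not have Sacksteder's structure theorem available for the possibly-degenerate surface; instead I would use a connectedness/continuity argument, propagating the sign of $\II_N$ across $A$ using the hypothesis that $\ol N(A)$ has measure zero and that $M\setminus A$ is connected (which follows since $A$ has measure zero in a connected manifold). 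Concretely, the set of points where $\II_N$ is definite of a fixed sign is open, its closure meets $A$ only in a null set, and continuity of $N$ forces the sign to be globally constant.

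Next, with $\II_N \succeq 0$ on $M\setminus A$, Lemma \ref{lem:hessian} gives that every point of $M\setminus A$ is locally convex with respect to $N$. To promote this to local convexity at points of $A$ as well, the plan is a Morse-theoretic argument in the spirit of do Carmo--Lima. Fix $p\in A$; we must show a neighborhood of $p$ lies in $T_pM^+$, the side where $N(p)$ points. Consider the height function $h(x)=\langle x - p, N(p)\rangle$ restricted to a small disk neighborhood $M_p$ of $p$ in $M$, which is $\C^1$ with $h(p)=0$ and $dh_p = 0$. The difficulty is that $h$ need not be Morse and $M_p$ is only $\C^1$. The key point — and I expect this to be \emph{the main obstacle} — is to show $h \geq 0$ on $M_p$. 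Using that $M\setminus A$ is locally convex with respect to $N$ and that $A$ has measure zero, one argues that for a generic nearby parallel hyperplane the sublevel sets of $h$ are controlled: on $M_p\setminus A$ the function $h$ has no local maximum in the interior (a strict interior local max would contradict $\II_N \succeq 0$ there, since at such a point the surface lies below its tangent plane), and because $\ol N(A)$ has measure zero, Sard-type reasoning lets one choose regular values avoiding the image of $A$, so that level sets $\{h = c\}$ for small $c<0$ are smooth hypersurfaces lying entirely in $M\setminus A$; tracking these as $c\to 0^-$ and using that they bound convex caps (Lemma \ref{lem:clipping}, applied to the locally convex pieces) forces $\{h<0\}\cap M_p$ to be empty. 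This is the step where the finer use of Morse theory replacing the $\C^\infty$ hypothesis of \cite{dCL69} is genuinely needed: one must avoid critical values landing in the bad set $A$, which is exactly what the measure-zero condition on $\ol N(A)$ buys.

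Finally, once $M$ is shown to be locally convex everywhere with respect to the continuous normal $N$ — and in particular one-sided — the classical conclusion applies: $M$ is connected (being a closed hypersurface it is connected by hypothesis, or one works with a component), locally convex, with a point of local strict convexity, hence convex by van Heijenoort's theorem \cite{vH:convex}. I would close by remarking that the hypotheses are tailored precisely so that the surface $\ol M$ constructed in Section \ref{sec:extension} satisfies them, so that Proposition \ref{prop:C1} applied to $\ol M$ yields the convexity of $\ol M$, and restricting back gives the convexity of $M$, completing the proof of Theorem \ref{thm:main}.
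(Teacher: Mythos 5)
Your strategy is not the one the paper uses, and as written it has a genuine gap. The paper's proof of Proposition \ref{prop:C1} never establishes local convexity of $M$; it follows do Carmo--Lima's actual argument, which counts critical points of the height functions $h_u$. For almost every $u$, the points $\pm u$ are regular values of the Gauss map (Sard's theorem on $M\setminus A$ plus the hypothesis that $N(A)$ is null), so every critical point of $h_u$ lies in $M\setminus A$ and has nonzero Gauss--Kronecker curvature; nonnegativity of the sectional curvatures then forces all principal curvatures there to have the same sign, so the critical point is a nondegenerate local extremum. Kuiper's $\C^2$ version of the Morse lemma and the Morse inequalities give exactly two critical points, the area formula converts this into minimal total absolute curvature, and Chern--Lashof yields convexity. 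What you describe instead --- prove local convexity everywhere with respect to a globally consistent normal, then invoke van Heijenoort --- is Sacksteder's route for the complete case, which is precisely what Proposition \ref{prop:C1} is designed to avoid under the weakened $\C^1$/$\C^2$ regularity.

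The concrete gap is in your first step. To choose a consistent normal $N$ with $\II_N$ positive semidefinite on all of $M\setminus A$, you must rule out the following configuration: two open regions where the second fundamental form is nonzero but semidefinite with respect to \emph{opposite} normals, separated by a region where the second fundamental form vanishes identically. The hypotheses on $A$ give you no control over this flat locus --- it can have full measure and large Gauss image is not the issue, since its Gauss image is irrelevant to whether it separates $M$. Ruling out such a separation is exactly the content of Sacksteder's \cite[Thm.~1]{sacksteder:convex} (each component of the flat set is a convex planar set, hence does not disconnect), and that theorem needs $\C^{n+1}$ regularity for a Sard-type argument on the Gauss map restricted to the flat set; your proposed substitute (connectedness of $M\setminus A$ plus continuity of $N$) does not address it, because the sign can flip across a flat component lying entirely inside $M\setminus A$. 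Your second step (showing $h\geq 0$ near $p\in A$ by tracking level sets) inherits the same problem --- "a strict interior local max would contradict $\II_N\succeq 0$" is false at flat points --- and is in any case only a sketch. The lesson is that the measure-zero hypotheses on $A$ and $N(A)$ are tailored to the height-function/Sard argument, not to a local-convexity argument.
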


The proof follows from the next four lemmas. For every $u\in\S^n$, let $h_u\colon M\to\R$ be the \emph{height function}
$
h_u(\cdot):=\langle \cdot, u\rangle.
$
 Note that  $h_u$ is $\C^1$ on $M$, and is $\C^2$ on $M\setminus A$.   The next lemma follows from Chern and Lashof \cite[Thm. 3]{chern&lashof:tight1} as indicated in \cite[Lem 3.2]{ghomi:shadow}.  Alternatively, one may apply a result of Kuiper \cite[Thm. 4]{Kui70} which applies  to topologically immersed hypersurfaces, together with Reeb's theorem  \cite[Thm. 4.1]{milnor:morse}. A \emph{critical point} of $h_u$ is a point where its gradient vanishes.

\begin{lem}[\cite{Kui70}]\label{lem:kuiper}
$M$ is convex if $h_u$ has only two critical points for almost every $u\in\S^n$.
\end{lem}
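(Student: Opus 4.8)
The plan is to derive Lemma~\ref{lem:kuiper} as the first link in a Morse-theoretic chain that culminates in Proposition~\ref{prop:C1}. Since $M$ is only $\C^1$, the Morse theory cannot be applied to $h_u$ directly; instead I would work with the Gauss map and a counting argument. The starting point is that for a $\C^1$ immersed closed hypersurface, a point $p$ is critical for $h_u$ precisely when $u$ is $\pm$ the unit normal $\ol N(p)$, i.e. when $u\in\ol N(M)\cup(-\ol N(M))$. By hypothesis the ``bad'' set $A$ has Gauss image of measure zero, so for almost every $u\in\S^n$ every critical point of $h_u$ lies in $M\setminus A$, where $M$ is $\C^2$ and nonnegatively curved; hence at such points the Hessian of $h_u$ is $-\langle d\ol N,\cdot\rangle$ composed with the second fundamental form, which is negative semidefinite (or positive semidefinite) — in any case it has no negative-index directions that would create a saddle in the strict sense. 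The subtlety is that degenerate critical points are allowed, so I cannot naively invoke the nondegenerate Morse inequalities.

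The way around the degeneracy is Kuiper's tightness machinery, which is exactly what the cited \cite[Thm.~4]{Kui70} supplies: for a topologically immersed compact hypersurface, if $h_u$ attains its maximum and its minimum each on a connected set and has no other local extrema for almost every $u$, then $M$ is tight, and a tight immersed topological sphere (or more generally, tightness plus the Euler-characteristic count) forces convexity. So the real content of this lemma is to reduce ``$M$ is convex'' to ``$h_u$ has exactly two critical points for a.e.\ $u$''. First I would invoke Reeb's theorem \cite[Thm.~4.1]{milnor:morse} in the following guise: on the open dense set of $u$ for which all critical points of $h_u$ are nondegenerate (which one gets by a Sard-type argument on the Gauss map restricted to $M\setminus A$, using that $\ol N(A)$ is null), a function with exactly two critical points on a closed manifold forces that manifold to be homeomorphic to a sphere and, combined with the semidefiniteness of the Hessian established above, forces tightness. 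Then Kuiper's theorem upgrades tightness of a topologically embedded/immersed sphere to convexity of $M$.

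In the write-up I would state it crisply: assume $h_u$ has only two critical points for a.e.\ $u$; for a.e.\ such $u$ these are exactly one maximum and one minimum (an interior critical point of a height function on a closed hypersurface in $\R^{n+1}$ that is a local max or local min is accounted for, and there can be no third point), so the number $\mu(u)$ of critical points equals $2$ a.e.; then the Chern--Lashof total absolute curvature of $M$ equals $\tfrac12\int_{\S^n}\mu(u)\,d\sigma(u)=\vol(\S^n)$, the minimum possible, so $M$ is tight; finally tightness of the immersed topological sphere $M$ yields convexity by \cite{Kui70}, or alternatively one quotes \cite[Lem.~3.2]{ghomi:shadow} which packages the Chern--Lashof argument directly.

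The main obstacle I anticipate is the $\C^1$-versus-$\C^2$ bookkeeping: one must be careful that the set of $u$ for which $h_u$ has a critical point \emph{inside} $A$ is null — this uses precisely the hypothesis that $\ol N(A)$ is null together with the fact that critical points correspond to normal directions — and that on the complement the $\C^2$ Morse/tightness theory legitimately applies even though the ambient function is globally only $\C^1$. Handling the possibly-degenerate critical points (so that one genuinely needs Kuiper's topological tightness theorem rather than the smooth Morse inequalities) is the other delicate point. Everything else — the correspondence between critical points and the Gauss image, the lower bound $\mu(u)\ge 2$ for a.e.\ $u$ from connectedness of $M$, and the Chern--Lashof identification — is standard and I would treat it briefly, citing \cite{chern&lashof:tight1,Kui70,milnor:morse}.
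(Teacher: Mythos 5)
Your crisp write-up paragraph is exactly the paper's proof: critical points of $h_u$ correspond to preimages of $\pm u$ under the Gauss map, so $\tfrac12\int_{\S^n}\#C(h_u)\,du=\int_M|K|=\vol(\S^n)$ by the area formula, and minimal total absolute curvature forces convexity by Chern--Lashof (the Kuiper/Reeb route you sketch is the paper's stated alternative). The surrounding Morse-theoretic and Sard-type discussion is not needed for this lemma --- it is the content of the subsequent lemmas in the section --- but it does not affect the correctness of your argument.
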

\begin{proof}
Let $C(h_u)$ be the set of critical points of $h_u$. Then $p\in C(h_u)$ if and only if $N(p)=\pm u$, where $N$ is the Gauss map of $M$. If $\# C(h_u)=2$ for almost all $u\in\S^n$, then, by the area formula \cite[Thm. 3.2.3]{federer:book}
$$
2 \vol(\S^n)=\int_{\S^n} \# C(h_u)\,du=\int_{\S^n}\# N^{-1}(\pm u)\,du=2\int_{M}|\det(dN_p)|\,dp=2\int_{M} |K|,
$$
where $K$ is the Gauss-Kronecker curvature of $M$. By assumption, $N$ is differentiable almost everywhere, and so the integrals above are well defined. Thus
$
\int_{M} |K|=\vol(\S^n),
$
or $M$ has ``minimal total absolute curvature", which yields that $M$ is convex by Chern and Lashof \cite[Thm. 3]{chern&lashof:tight1}.
\end{proof}

We say that a critical point $p$ of $h_u$ is   \emph{nondegenerate} provided that $p\in M\setminus A$ and  the eigenvalues of the Hessian of $h_u$ at $p$ are all nonzero. The following fact is stated for $\C^\infty$ hypersurfaces in  \cite[Lem. 2]{dCL69}. Here we apply the Morse inequalities to extend that result to the $\C^1$ case:

\begin{lem}[\cite{dCL69,kuiper72}]\label{lem:dCL}
If, for some $u\in\S^n$, all critical points of $h_u$ are nondegenerate  local extrema, then $h_u$ has only two critical points.
\end{lem}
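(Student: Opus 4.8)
The plan is to deduce the lemma from the Morse inequalities, applied both to $h_u$ and to $-h_u=h_{-u}$. By the definition of nondegeneracy, every critical point of $h_u$ lies in $M\setminus A$, where $M$ is $\C^2$, and has nonsingular Hessian; being moreover a local extremum, its Morse index equals $0$ (a local minimum) or $n$ (a local maximum). Nondegenerate critical points are isolated and $M$ is compact, so $h_u$ has only finitely many. The first thing I would record is that the machinery of Morse theory is available here even though $h_u$ is merely $\C^1$ on all of $M$: near each critical point $h_u$ is a genuine $\C^2$ Morse function, so the local normal form holds there, while away from the finitely many critical points sublevel sets are deformed along a $\C^1$ pseudo-gradient. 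This is precisely the point at which the $\C^\infty$ reasoning of \cite{dCL69} is refined, along the lines of \cite{kuiper72}, and it yields the (weak and strong) Morse inequalities for $h_u$.

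Next, write $c_k$ for the number of critical points of $h_u$ of index $k$, and $b_k$ for the $k$-th Betti number of $M$. By the first paragraph $c_k=0$ for $1\leq k\leq n-1$; here one uses $n\geq 2$, which is what makes index $1$ strictly interior. The key step is to show $c_0=1$. On the one hand $M$ is connected, so $h_u$ attains a minimum and $c_0\geq b_0=1$. On the other hand, the strong Morse inequality in degree $1$ reads $c_1-c_0\geq b_1-b_0$, that is $-c_0\geq b_1-1\geq -1$, so $c_0\leq 1$. Hence $c_0=1$: $h_u$ has a unique minimum.

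To obtain $c_n=1$ I would apply the previous paragraph to $-h_u=h_{-u}$, which is again a height function on $M$. Its critical set coincides with that of $h_u$, and its Hessian at each critical point is the negative of that of $h_u$, so its index there is $n$ minus the index of $h_u$; thus $h_{-u}$ also has critical points only of indices $0$ and $n$, all nondegenerate local extrema, and its index-$0$ critical points are exactly the local maxima of $h_u$, of which there are $c_n$. Applying the conclusion of the previous paragraph to $h_{-u}$ gives $c_n=1$. Therefore $h_u$ has exactly $c_0+c_n=2$ critical points.

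The only genuinely delicate point is the one flagged at the outset: justifying the Morse inequalities (equivalently, the handle-by-handle description of sublevel sets) for the $\C^1$ function $h_u$, which is where the hypothesis that the critical points are nondegenerate and lie in the $\C^2$ locus $M\setminus A$ is essential. Everything after that—the degree-$1$ strong Morse inequality and the passage to $-h_u$—is elementary bookkeeping.
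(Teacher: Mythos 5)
Your argument is correct and is essentially the paper's: both proofs rest on the observation that the Morse inequalities apply to the $\C^1$ function $h_u$ because it is $\C^2$ with nondegenerate Hessian near each critical point (Kuiper's $\C^2$ Morse lemma), after which the hypothesis forces all indices to be $0$ or $n$. The only cosmetic difference is the final bookkeeping: the paper invokes the lacunary principle ($C_0=\beta_0$, $C_n=\beta_n$) together with Poincar\'e duality, while you use the degree-one strong Morse inequality to get $C_0=1$ and then pass to $-h_u=h_{-u}$ to get $C_n=1$.
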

\begin{proof}
It is clear from the proof of Morse inequalities \cite[Sec. 5]{milnor:morse} that they apply to any $\C^1$ function which is sufficiently smooth near its critical points, so that Morse's Lemma holds \cite[Lem. 2.2]{milnor:morse}. Kuiper \cite{kuiper72} proved that Morse's lemma holds for functions which are $\C^2$ near an isolated critical point; see also Ostrowski \cite{ostrowski68}. Thus Morse inequalities do indeed apply to $h_u$.
Let $C_\lambda$ be the number of critical points of $h_u$ of index $\lambda$, and $\beta_\lambda$ be the Betti numbers of $M$. By assumption $C_\lambda=0$ for $0<\lambda<n$.  So by \cite[Cor. 5.4]{milnor:morse} $C_0=\beta_0$ and $C_n=\beta_n$.
By Poincar\'{e} duality, $\beta_0=1=\beta_n$, which completes the proof.
\end{proof}

We say that $u\in\S^n$ is a \emph{regular value} of the Gauss map $N$ provided that $N^{-1}(u)\subset M\setminus A$, and $dN_p$ is nondegenerate for every $p\in N^{-1}(u)$.

\begin{lem}
If  $\pm u$ are regular values of $N$, then all critical points of $h_u$  are nondegenerate  local extrema.
\end{lem}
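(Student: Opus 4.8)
The plan is to relate the Hessian of the height function $h_u$ at a critical point $p$ directly to the differential of the Gauss map $dN_p$, and then to read off both nondegeneracy and the "local extremum" property from the hypotheses that $\pm u$ are regular values of $N$ and that $M$ is nonnegatively curved. First I would recall that $p$ is a critical point of $h_u$ precisely when $N(p)=\pm u$; since $\pm u$ are regular values, every such $p$ lies in $M\setminus A$, so in a neighborhood of $p$ the hypersurface $M$ is $\C^2$ and the classical formulas of surface theory apply. In particular, writing $M$ near $p$ as a graph over $T_pM$ with $p$ at the origin, one has $h_u = \pm h_N$ where $h_N$ is the height in the normal direction, and the Hessian of $h_u$ at $p$ equals $\pm$ the second fundamental form of $M$ at $p$ (with respect to the appropriate orientation), which in turn is $\pm dN_p$ under the usual identification of $T_pM$ with the tangent space of $\S^n$ at $N(p)$.

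From this identification the conclusion is immediate. Since $u$ (hence $\pm u$) is a regular value of $N$, the map $dN_p$ is nondegenerate, so all eigenvalues of the Hessian of $h_u$ at $p$ are nonzero; combined with $p\in M\setminus A$ this is exactly the definition of $p$ being a nondegenerate critical point. For the extremum property, I would invoke that $M$ is nonnegatively curved on $M\setminus A$: the Gauss--Kronecker curvature $K = \det(dN_p)$ is nonnegative, and being nonzero it is strictly positive. Hence all eigenvalues of $dN_p$ — equivalently, the principal curvatures at $p$ — have the same sign, so the second fundamental form at $p$ is definite. Therefore the Hessian of $h_u$ at $p$ is either positive definite or negative definite, which means $p$ is a nondegenerate local minimum or a nondegenerate local maximum of $h_u$. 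This handles every critical point of $h_u$, so the proof is complete.

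The one point requiring a little care — and the only real obstacle — is that "nonnegatively curved" means nonnegative \emph{sectional} curvature of the induced metric, not a priori that the shape operator $dN_p$ is semidefinite. I would address this by noting that at a point $p\in M\setminus A$, Gauss's equation gives the sectional curvature of the plane spanned by principal directions $e_i,e_j$ as $\kappa_i\kappa_j$, where $\kappa_i$ are the principal curvatures; nonnegativity of all these products $\kappa_i\kappa_j$ forces all the nonzero $\kappa_i$ to share a common sign, so after possibly replacing $N$ by $-N$ the shape operator is positive semidefinite. Since here additionally $K=\prod_i\kappa_i \neq 0$, no $\kappa_i$ vanishes and $dN_p$ is in fact positive definite (for one choice of orientation), giving the definiteness of the Hessian of $h_u$ claimed above. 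Everything else is the standard computation identifying $\operatorname{Hess} h_u$ at a critical point with $\pm$ the second fundamental form, which I would state without grinding through coordinates.
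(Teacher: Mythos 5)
Your proof is correct and follows essentially the same route as the paper: identify the Hessian of $h_u$ at a critical point with $\pm$ the second fundamental form, use regularity of $\pm u$ to conclude $K(p)=\det(dN_p)\neq 0$ so no principal curvature vanishes, and use $k_ik_j\geq 0$ (the sectional curvatures, via Gauss's equation) to force all principal curvatures to share a sign. The extra care you take in distinguishing sectional from extrinsic curvature is exactly the point the paper's one-line remark ``since these are the sectional curvatures of $M$'' is relying on.
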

\begin{proof}
Let $p$ be a critical point of $h_u$. Then $N(p)=\pm u$. Thus $K(p)=\det (dN_p)\neq 0$, for $p\in M\setminus A$. Hence the principal curvatures $k_i$ of $M$ do not vanish at $p$. Further $k_i(p)k_j(p)\geq 0$, since these are the sectional curvatures of $M$  for $i\neq j$. Thus $k_i(p)$ all have the same sign. It remains only to recall the well-known fact that $k_i(p)$ are
the eigenvalues of the Hessian of $h_u$ at $p$, after we replace $N$ with $-N$ if necessary.
\end{proof}

To complete the proof of Proposition \ref{prop:C1} it only remains to observe that

\begin{lem}
For almost every $u\in\S^n$, $\pm u$ are regular values of $N$.
\end{lem}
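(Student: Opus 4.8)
The plan is to deduce the statement from Sard's theorem applied to the restriction of the Gauss map $N$ to the regular part $M\setminus A$, together with the hypothesis that $N(A)$ has measure zero.

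First I would observe that on $M\setminus A$ the hypersurface $M$ is $\C^2$, so $N$ restricted to $M\setminus A$ is a $\C^1$ map between the $n$-dimensional manifolds $M\setminus A$ and $\S^n$. A point $p\in M\setminus A$ is a critical point of $N|_{M\setminus A}$ exactly when $dN_p$ fails to be surjective, which in the equidimensional setting is the same as $\det(dN_p)=0$, i.e.\ as $dN_p$ being degenerate. Since $N|_{M\setminus A}$ is $\C^1$ and domain and target have the same dimension, Sard's theorem (which in the equidimensional case requires only $\C^1$ regularity; see \cite[Thm.\ 3.4.3]{federer:book}) implies that the set $\Sigma\subset\S^n$ of critical values of $N|_{M\setminus A}$ has measure zero.

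Next I would set $E:=\Sigma\cup(-\Sigma)\cup N(A)\cup(-N(A))$. Since $N(A)$ has measure zero by hypothesis, so does $-N(A)$, and hence $E$ has measure zero. It then remains to check that for every $u\in\S^n\setminus E$, both $u$ and $-u$ are regular values of $N$. Indeed, let $p\in N^{-1}(u)$. Since $u\notin N(A)$ we have $p\notin A$, so $p\in M\setminus A$; and since $u\notin\Sigma$, the point $p$ is not a critical point of $N|_{M\setminus A}$, so $dN_p$ is nondegenerate. Thus $N^{-1}(u)\subset M\setminus A$ and $dN_p$ is nondegenerate at each $p\in N^{-1}(u)$, i.e.\ $u$ is a regular value of $N$; the same argument with $-u\notin\Sigma\cup N(A)$ shows $-u$ is a regular value as well.

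The only point requiring care is the invocation of Sard's theorem at the $\C^1$ level: it is essential that the domain $M\setminus A$ and the target $\S^n$ have the same dimension, so that $\C^1$ regularity suffices and no higher smoothness of $M$ on $M\setminus A$ is needed; the remainder of the argument is bookkeeping with the four exceptional sets comprising $E$.
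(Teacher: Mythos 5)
Your proof is correct and follows essentially the same route as the paper: apply Sard's theorem to the $\C^1$ map $N|_{M\setminus A}$ between equidimensional manifolds, discard the measure-zero sets $\pm N(A)$, and take the union of the four exceptional sets. Your remark that $\C^1$ regularity suffices for Sard in the equidimensional case is exactly the point the paper relies on via its citation of \cite[Thm.~3.4.3]{federer:book}.
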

\begin{proof}
Note that $\pm u$ are regular values of $N$, if $u$ is a regular value of $\pm N$. By Sard's theorem, the sets $\pm C$ of critical values of $\pm N$ on $M\setminus A$ have measure zero, since $N$ is $\C^1$ on $M\setminus A$. Furthermore $\pm N(A)$ have measure zero by assumption. So $X:=\pm N(A)\cup\pm C$ has measure zero and every $u$ in $\S^n\setminus X$ is a regular value of $\pm N$.
\end{proof}

\section{Constant $m^{th}$ Mean Curvature}\label{sec:cmc}
Here we prove Corollary \ref{cor:cmc}.
In 1958, Alexandrov  \cite{Ale62}  showed that any embedded closed hypersurface with constant mean curvature  in $\R^{n+1}$ is a round sphere, via his celebrated reflection method. In fact he established a more general result for  certain Weingarten hypersurfaces, e.g., see  \cite[Prop. 1.1]{Har78}. An immersed orientable hypersurface $M$ in $\R^{n+1}$ is  \emph{Weingarten}  if $W(k_1(p),\dots,k_n(p))$  is constant for some functional $W$ of its  principal curvatures $k_i$.  In particular $M$ has \emph{constant $m^{th}$ mean curvature}, for $1\leq m\leq n$, when $W$ is the  symmetric elementary polynomial
$$
\sigma_m(k_1,\dots, k_n)=\sum_{i_1<\dots<i_m}k_{i_1}\dots k_{i_m}.
$$
 Thus $m=1$, $2$, and $n$ correspond respectively to the mean, scalar, and   Gauss-Kronecker curvatures of $M$.
Hartman \cite{Har78} showed that a complete nonnegatively curved hypersurfaces with constant $m^{th}$ mean curvature is the product of a sphere and a Euclidean space. See also Rosenberg \cite{rosenberg:constant} for another proof, and Cheng and Yau \cite{cheng&yau} for more on the case $m=2$. On the other hand, for surfaces with boundary several fundamental problems in this area remain open. It is not known, for instance, if a compact embedded CMC surface in $\R^3$ with circular boundary is umbilical (i.e., a spherical cap or a flat disk) \cite{Lop13}.

\begin{proof}[Proof of Corollary \ref{cor:cmc}]
By Theorem \ref{thm:main}, $M$ is an embedded convex disk. Suppose first  that $\d M$ is a great sphere, i.e., it lies in a hyperplane $H$ passing through the origin. If $M$ lies in $H$ as well then we are done. Otherwise we may apply Alexandrov's reflection technique with respect to the hyperplanes orthogonal to $H$ to conclude that $M$ is ``axially symmetric" or ``rotational", as has been shown by Wente \cite[Thm. 1.1]{Wen80}, see also Koiso \cite{Koi86}. In particular note that the relevant (elliptic) maximum principles cited in \cite[p. 391--392]{Wen80} all apply to surfaces with constant $m^{th}$ mean curvature. We also recall that axial symmetry means that, after a rigid motion, $M$ is invariant with respect to the standard action of the orthogonal group $O(n)$ on $\R^{n}\times\{0\}\subset\R^{n+1}$, which fixes the $x_{n+1}$-axis.

Next we assume that $\d M$ lies in an open hemisphere. Then the reflection method  may be adapted  to this setting via rotating hyperplanes which pass through the origin. More precisely, suppose that  $\d M$ lies in the interior of the  upper hemisphere of $\S^n$. Then support hyperplanes $H$ of $M$ along $\d M$ intersect the hyperplane of the first $n$-coordinates along $(n-1)$-dimensional subspaces $L:=H\cap(\R^{n}\times \{0\})$. Instead of moving $H$ parallel to itself, we rotate it around $L$, which  is a well-known variation on Alexandrov's original technique, e.g., see \cite[p. 75]{Lop13}.  Once again it follows, as in \cite{Wen80}, that $M$ is symmetric with respect to a line passing through the origin, which after a rotation we may assume to be the $x_{n+1}$-axis.

Now it follows from the generalization of Delaunay's theorem by Hsiang  \cite{Hsi82,Hsi83}, see also Sterling \cite{Ste87}, that $M$ is a spherical cap or an equatorial disk. Indeed, other than spheres and minimal hypersurfaces,  all rotational hypersurfaces of constant $m^{th}$ mean curvature in $\R^{n+1}$ must be part of a periodic hypersurface. Since $M$ intersects its axis of symmetry, it cannot be extended to a rotational periodic hypersurface. Hence it must be either spherical or  minimal. Due to the free boundary condition, and the maximum principle, $M$ may be minimal only when $\d M$ is a great sphere, in which case $M$ is an equatorial disk. Otherwise $M$ will be a spherical cap which completes the proof.
\end{proof}

\section*{Acknowledgment}
We thank Stephanie Alexander, Ralph Howard, Rafael L\'opez, John Pardon, and Ivan Sterling for useful comments. Furthermore, we are indebted to the anonymous referee for extensive comments which led to a number of  corrections and improvements in the exposition of this work.

\bibliography{references}

\end{document}